\newtheorem{theorem}{Theorem}[section]
\newtheorem{lemma}[theorem]{Lemma}
\newtheorem{proposition}[theorem]{Proposition}
\newtheorem{remark}[theorem]{Remark}       
\numberwithin{equation}{section}
\begin{document}
\title[]{Characterizations of Lorentz Type Sobolev Multiplier Spaces and Their Preduals}
\date{}
\author[Keng Hao Ooi]
{Keng Hao Ooi}
\address{Department of Mathematics,
National Central University,
No.300, Jhongda Rd., Jhongli City, Taoyuan County 32001, Taiwan (R.O.C.).}
\email{kooi1@math.ncu.edu.tw}

\begin{abstract}
We provide several characterizations of Sobolev multiplier spaces of Lorentz type and their preduals. Block decomposition and K\"othe dual of such preduals are discussed. As an application, the boundedness of local Hardy-Littlewood maximal function on these spaces will be justified.
\end{abstract}

\maketitle

\section{Introduction}
Let $n\in\mathbb{N}$, $0<\alpha<\infty$, and $1<s\leq n/\alpha$. The Bessel capacities ${\rm Cap}_{\alpha,s}(\cdot)$ are defined by
\begin{align*}
\text{Cap}_{\alpha,s}(E)&=\inf\{\|f\|_{L^{s}(\mathbb{R}^{n})}^{s}:f\geq 0,~G_{\alpha}\ast f\geq 1~\text{on}~E\},\quad E\subseteq\mathbb{R}^{n},
\end{align*}
where $G_{\alpha}(x)=\mathcal{F}^{-1}[(1+\left|\cdot\right|^{2})^{-\alpha/2}](x)$, $x\in\mathbb{R}^{n}$ are the Bessel kernels, and $\mathcal{F}^{-1}$ is the inverse distributional Fourier transform on $\mathbb{R}^{n}$. The Sobolev multiplier spaces $M_{p}^{\alpha,s}$ are studied in \cite{OP} with the following form that 
\begin{align*}
\|f\|_{M_{p}^{\alpha,s}}=\sup_{K}\left(\frac{\int_{K}|f(x)|^{p}dx}{{\rm Cap}_{\alpha,s}(K)}\right)^{\frac{1}{p}},\quad 1<p<\infty,
\end{align*} 
where the supremum is taken over all compact sets $K\subseteq\mathbb{R}^{n}$ with nonzero capacities. As noted in \cite{OP}, the spaces $M_{p}^{\alpha,s}$ appear in many super-critical nonlinear PDEs including the Navier-Stokes system, and a function $f\in M_{p}^{\alpha,s}$ if and only if $|f|^{p/s}$ belongs to the space of Sobolev multipliers $M(H^{\alpha,s}\rightarrow L^{s})$ that has been studied in \cite{MS}. Several characterizations of preduals of $M_{p}^{\alpha,s}$ are also studied in \cite{OP}.

In the present paper, we generalize $M_{p}^{\alpha,s}$ to Lorentz type spaces and characterize their preduals. First we introduce the fundamental terminologies. In the sequel, we abbreviate $\mathcal{R}$, $\mathcal{C}$ for $\mathbb{R}^{n}$,  ${\rm Cap}_{\alpha,s}(\cdot)$ respectively. Given a positive measure $\mu$ on $\mathcal{R}$, let $\mathcal{M}(\mathcal{R},\mu)$ be the set of all extended real-valued $\mu$-measurable functions on $\mathcal{R}$. A functional $\left\|\cdot\right\|:\mathcal{M}(\mathcal{R},\mu)\rightarrow[0,\infty]$ is said to be a quasi-norm if there is a positive constant $\kappa_{0}$ such that 
\begin{align*}
\|f+g\|&\leq\kappa_{0}(\|f\|+\|g\|),\\
\|\lambda f\|&=|\lambda|\|f\|
\end{align*}
for $f,g\in\mathcal{M}(\mathcal{R},\mu)$, $\lambda\in\mathbb{R}$, and $\|f\|=0$ exactly when $f=0$ $\mu$-almost-everywhere. Subsequently, assuming that $X,Y$ are subspaces of $\mathcal{M}(\mathcal{R},\mu)$ with the corresponding quasi-norms $\left\|\cdot\right\|_{X},\left\|\cdot\right\|_{Y}$, we write $X\hookrightarrow Y$ provided that $\left\|\cdot\right\|_{Y}\leq c\left\|\cdot\right\|_{X}$ for a positive constant $c$, and the symbol $X\approx Y$ refers to both $X\hookrightarrow Y$ and $Y\hookrightarrow X$. Since $\left\|\cdot\right\|_{X}$ can induce a topological vector space for $X$, then $X$ is isomorphic to $Y$ exactly when $X\approx Y$, the isomorphism becomes isometric when $\left\|\cdot\right\|_{X}=\left\|\cdot\right\|_{Y}$. Let $X^{\ast}$ be the space of all continuous linear functionals on $X$. We write $X^{\ast}\approx Y$ provided that for any $\mathcal{L}\in X^{\ast}$, there is a unique $g\in Y$ such that 
\begin{align*}
\mathcal{L}(f)=\int_{\mathcal{R}}f(x)g(x)d\mu(x),\quad f\in X,
\end{align*} 
and for each $g\in Y$, the canonical linear functional $\mathcal{L}_{g}$ induced by $g$ satisfies the above integral representation with $\mathcal{L}_{g}$ in place of $\mathcal{L}$, in which case, the operator norm $\|\mathcal{L}\|$ satisfies $c^{-1}\|g\|_{Y}\leq\|\mathcal{L}\|\leq c\|g\|_{Y}$ for some positive constant $c$. We also call $X$ a predual of $Y$ for $X^{\ast}\approx Y$. On the other hand, we define the K\"othe dual $X'$ of $X$ to be the set of all functions $f\in\mathcal{M}(\mathcal{R},\mu)$ such that 
\begin{align*}
\|f\|_{X'}=\sup\left\{\int_{\mathcal{R}}|f(x)g(x)|d\mu(x):\|g\|_{X}\leq 1\right\}
\end{align*}
are finite. It is evident that 
\begin{align*}
\int_{\mathcal{R}}|f(x)g(x)|d\mu(x)\leq\|f\|_{X}\|g\|_{X'},
\end{align*}
and hence $X\hookrightarrow X''=(X')'$.

Given $0<p,q<\infty$, the Lorentz space $L^{p,q}(\mu)$ is defined to be the set of all functions $f\in\mathcal{M}(\mathcal{R},\mu)$ such that
\begin{align*}
\|f\|_{L^{p,q}(\mu)}=p^{\frac{1}{q}}\left(\int_{0}^{\infty}\mu(\{x\in\mathcal{R}:|f(x)|>t\})^{\frac{q}{p}}t^{q}\frac{dt}{t}\right)^{\frac{1}{q}}
\end{align*}
are finite. We note that $\left\|\cdot\right\|_{L^{p,q}(\mu)}$ is a quasi-norm by a standard exercise in analysis. When $\mu$ is the usual Lebesgue measure on $\mathcal{R}$, we reduce the symbols $\mathcal{M}(\mathcal{R},\mu)$ and $L^{p,q}(\mu)$ to $\mathcal{M}(\mathcal{R})$ and $L^{p,q}$ respectively. Note that $L^{p}=L^{p,p}$ by an application of Fubini's theorem. To extend $M_{p}^{\alpha,s}$ into Lorentz type, we have two choices. Let $1<p<\infty$ and $1<q<\infty$. We define
\begin{align*}
\|f\|_{\mathcal{M}^{p,q}}=\sup_{K}\frac{\|f\chi_{K}\|_{L^{p,q}}}{\mathcal{C}(K)^{1/p}}
\end{align*}
and
\begin{align*}
\|f\|_{M^{p,q}}=\sup_{K}\frac{\|f\chi_{K}\|_{L^{p,q}}}{\mathcal{C}(K)^{1/q}},
\end{align*}
and the symbols $\mathcal{M}^{p,q},M^{p,q}$ designate the set of all functions $f\in\mathcal{M}(\mathcal{R})$ with finite $\|f\|_{\mathcal{M}^{p,q}},\|f\|_{M^{p,q}}$ respectively. Then $\mathcal{M}^{p,p}=M^{p,p}=M_{p}^{\alpha,s}$. Further, for any $1<p<\infty$ and $1<r<q<\infty$, it holds that $L^{p,r}\hookrightarrow L^{p,q}$ (see \cite[Proposition 1.4.10]{GL}), then we deduce that 
\begin{align*}
\mathcal{M}^{p,r}\hookrightarrow\mathcal{M}^{p,q}.
\end{align*}
Among the two candidates for a Lorentz type extension of $M_{p}^{\alpha,s}$, namely, $\mathcal{M}^{p,q}$ and $M^{p,q}$, the space $\mathcal{M}^{p,q}$ appears to be the more natural choice. Nevertheless, it will be seen in the following context that the preduals of $\mathcal{M}^{p,q}$ are more restrictive than of $M^{p,q}$ (see Theorems \ref{embed} and \ref{dual N} below). Further, we will prove that if $1<r<q\leq p<\infty$, $\alpha s=n$ (certain range of $p,q$ is assumed when $\alpha s<n$), then
\begin{align*}
M^{p,r}&\hookrightarrow M^{p,q},\\
L^{\infty}&\hookrightarrow M^{p,q}
\end{align*}
hold by an argument of Strichartz localization principle (see Proposition \ref{addition} below). 

Our first result reads as the following. Essentially it says that the K\"othe dual of $M^{p,q}$ is its predual.
\begin{theorem}\label{first predual}
Let $1<p<\infty$ and $1<q<\infty$. Then the following isomorphisms
\begin{align*}
[(M^{p,q})']^{\ast}&\approx M^{p,q},\\
[(\mathcal{M}^{p,q})']^{\ast}&\approx\mathcal{M}^{p,q}
\end{align*}
hold. The isomorphisms $\approx$ become isometric for $p=q$.
\end{theorem}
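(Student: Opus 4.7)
The plan is to adapt the Banach function space machinery underlying the $p=q$ case in \cite{OP} to the Lorentz quasi-norm setting. Writing $X$ for either $M^{p,q}$ or $\mathcal{M}^{p,q}$, one direction of the isomorphism is automatic: each $g\in X$ induces the canonical functional $\mathcal{L}_{g}(f)=\int_{\mathcal{R}}fg\,dx$ on $X'$, and the very definition of the K\"othe dual yields $|\mathcal{L}_{g}(f)|\leq\|f\|_{X'}\|g\|_{X}$, so that $g\mapsto\mathcal{L}_{g}$ embeds $X$ into $(X')^{\ast}$ as a contraction. What needs proof is the surjectivity together with the reverse norm bound.

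I would first establish a Fatou-type property for $X$: if $0\leq f_{k}\uparrow f$ a.e., then $\|f_{k}\|_{X}\uparrow\|f\|_{X}$. For each compact $K$ with $\mathcal{C}(K)>0$, monotone convergence on the distribution function gives $\|f_{k}\chi_{K}\|_{L^{p,q}}\uparrow\|f\chi_{K}\|_{L^{p,q}}$, and taking the supremum over $K$ (normalised by the appropriate power of $\mathcal{C}(K)$) delivers the claim. A standard biduality argument based on this Fatou property yields $X''=X$ with equivalent quasi-norms, isometrically in the case $p=q$ where $L^{p,p}=L^{p}$ is a genuine Banach space; in particular, the reverse bound $\|g\|_{X}\leq c\|\mathcal{L}_{g}\|$ can be recovered by testing against carefully chosen truncations inside the unit ball of $X'$.

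For surjectivity, fix $\mathcal{L}\in(X')^{\ast}$. I would first observe that on any ball $B\subset\mathcal{R}$ the finiteness of $\mathcal{C}(B)$, together with the monotonicity $\mathcal{C}(K)\leq\mathcal{C}(B)$ for $K\subset B$ and H\"older's inequality for Lorentz spaces, gives a local inclusion $L^{p',q'}(B)\subset X'$. Setting $\nu_{B}(E):=\mathcal{L}(\chi_{E})$ for measurable $E\subset B$, the continuity of $\mathcal{L}$ combined with the quasi-triangle inequality for $L^{p,q}$ forces countable additivity and absolute continuity of $\nu_{B}$ with respect to Lebesgue measure, so Radon-Nikodym produces a locally integrable $g$ on $\mathcal{R}$ with $\mathcal{L}(\chi_{E})=\int_{E}g\,dx$. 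A density argument, relying on absolute continuity of the $X'$-norm on simple functions, extends this identity from characteristic functions to all $f\in X'$, giving the desired representation.

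The principal obstacle is the norm estimate $\|g\|_{X}\leq c\|\mathcal{L}\|$. For each compact $K$ with $\mathcal{C}(K)>0$ one must test $\mathcal{L}$ against functions of the form $\mathrm{sgn}(g)\,h\chi_{K}/\mathcal{C}(K)^{1/q}$ with $h$ in the unit ball of $L^{p',q'}$, and then invoke $(L^{p,q})^{\ast}=L^{p',q'}$ to extract $\|g\chi_{K}\|_{L^{p,q}}/\mathcal{C}(K)^{1/q}\leq c\|\mathcal{L}\|$; the $\mathcal{M}^{p,q}$ case is analogous with $1/p$ in place of $1/q$. The delicate point is verifying that these test functions really sit in the unit ball of $X'$ with a constant uniform in $K$, which is where the subadditivity (rather than additivity) of $\mathcal{C}$ complicates matters and where the distinction between the two normalisations $\mathcal{C}(K)^{1/p}$ and $\mathcal{C}(K)^{1/q}$ becomes essential. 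When $p=q$ the quasi-norm reduces to the genuine $L^{p}$ norm and the H\"older constant equals one, so the entire argument produces the claimed isometric identity.
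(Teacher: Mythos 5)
There is a genuine gap, and it lies precisely where your proposal appeals to a ``standard biduality argument.'' You claim that the Fatou property of the Lorentz quasi-norm yields $X''=X$ for $X=M^{p,q}$ or $\mathcal{M}^{p,q}$, and that from this one can recover $[(X')]^{\ast}\approx X$. Neither step goes through as stated. The biduality theorem $X''=X$ for Banach function spaces (the one invoked by the paper via \cite[Theorem~6.2.9]{PKJF}) requires the functional defining $X$ to satisfy the genuine triangle inequality; for a mere quasi-norm, even one with the Fatou property, the identity $X''=X$ can fail, and the paper explicitly flags this at the end of Section~3. You cannot bypass this by evoking the case $p=q$: for $p\neq q$ the quasi-norm $\|\cdot\|_{L^{p,q}}$ is not a norm, so you must first replace it with an equivalent Banach function norm. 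The paper's move is exactly this: it passes to $\mathfrak{M}^{p,q}$ defined via $\Gamma_{1}^{p,q}(\cdot)$ (which \emph{is} a norm, and for which the Fatou and lattice axioms hold), applies the Banach function space biduality there, and then transports the isomorphism back to $M^{p,q}$ via $\Gamma_{1}^{p,q}\approx\|\cdot\|_{L^{p,q}}$. Omitting the renorming is not a cosmetic simplification but a substantive hole.

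The second gap concerns surjectivity of the canonical embedding $X\hookrightarrow(X')^{\ast}$. Even granting $X''=X$, this does not yield $(X')^{\ast}\approx X$; one additionally needs $(X')^{\ast}=(X')'$, i.e.\ that every continuous linear functional on $X'$ is an integral functional. This holds precisely when $X'$ has absolutely continuous norm. Your sketch asserts this (``relying on absolute continuity of the $X'$-norm on simple functions'') but never establishes it, and it is not automatic: for a general Banach function space $Y$, the dual $Y^{\ast}$ strictly contains $Y'$ because of singular functionals. The paper closes this gap via a convexity-concavity argument: it first shows $M^{p,q}$ is $r$-convex for suitable $1<r\leq\min(p,q)$ (using $\||f|^{r}\chi_{K}\|_{L^{p/r,q/r}}^{1/r}=\|f\chi_{K}\|_{L^{p,q}}$ and quasi-additivity of $\|\cdot\|_{L^{p/r,q/r}}$), deduces $r'$-concavity of $(M^{p,q})'$ from the duality $\ell^{r'}((M^{p,q})^{\ast})\approx[\ell^{r}(M^{p,q})]^{\ast}$, and then invokes Proposition~\ref{use convex}, which says a $q$-concave Banach function space has absolutely continuous norm. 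Your Radon--Nikodym route is a different strategy and could in principle yield a local representing function $g$, but the global norm bound $\|g\|_{X}\leq c\|\mathcal{L}\|$ and, more fundamentally, the passage from a functional on $X'$ to an integral representation both hinge on exactly the absolute-continuity input you take for granted. You would need to either reproduce the concavity argument or supply an independent proof that $X'$ has absolutely continuous norm before the remainder of your plan can be made rigorous.
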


It will be shown also that $(M^{p,q})''\approx M^{p,q}$, similar assertion also applies to $\mathcal{M}^{p,q}$. Another preduals of $M^{p,q}$ are given by block type spaces, which will be defined as follows. Let $1<p<\infty$ and $1<q<\infty$. A function $b\in\mathcal{M}(\mathcal{R})$ is called a $B^{p,q}$ block if 
\begin{enumerate}
\item $\{x\in\mathcal{R}:b(x)\ne 0\}\subseteq E$ for some bounded set $E\subseteq\mathcal{R}$.

\medskip
\item $\mathcal{C}(E)^{1/q'}\|b\|_{L^{p,q}}\leq 1$.
\end{enumerate}
Whereas a $\mathcal{B}^{p,q}$ block is defined as in above except that $\mathcal{C}(E)^{1/p'}\|b\|_{L^{p,q}}\leq 1$. Then the space $B^{p,q}$ is defined to be the set of all functions $f\in\mathcal{M}(\mathcal{R})$ that having the following block decomposition
\begin{align}\label{definition of block decomposition}
f=\sum_{k}\lambda_{k}b_{k},
\end{align}
where the convergence of the series is pointwise almost everywhere, $\{\lambda_{k}\}\in\ell^{1}$, and each $b_{k}$ is a $B^{p,q}$ block. Further, we equip $B^{p,q}$ with
\begin{align}\label{definition of block norm}
\|f\|_{B^{p,q}}=\inf\left\{\sum_{k}|\lambda_{k}|:f=\sum_{k}\lambda_{k}b_{k}\right\},
\end{align}
where the infimum is taken over all such block decompositions. The spaces $\mathcal{B}^{p,q}$ are defined similarly with the corresponding $\mathcal{B}^{p,q}$ blocks. In the next theorem, we denote the H\"older's conjugate of $p$ by $p'=p/(p-1)$, where $1<p<\infty$. 
\begin{theorem}\label{second predual}
Let $1<p<\infty$ and $1<q<\infty$. Then the following isomorphisms
\begin{align*}
&(B^{p',q'})^{\ast}\approx M^{p,q}\approx(B^{p',q'})',\\
&(\mathcal{B}^{p',q'})^{\ast}\approx\mathcal{M}^{p,q}\approx(\mathcal{B}^{p',q'})'
\end{align*}
hold. Moreover, both $B^{p',q'}$ and $\mathcal{B}^{p',q'}$ are Banach spaces.
\end{theorem}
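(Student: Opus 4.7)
The plan is to establish both the dual and K\"othe-dual identifications simultaneously for $B^{p',q'}$, and then note that the same pattern handles $\mathcal{B}^{p',q'}$ with the sole change that the block exponent $1/q$ is replaced by $1/p$. The two main inputs I will use are the H\"older-type inequality for Lorentz spaces $\int|fg|\,dx\leq c\|f\|_{L^{p,q}}\|g\|_{L^{p',q'}}$ and the classical duality $(L^{p',q'}(E))^{\ast}\approx L^{p,q}(E)$ on bounded $E$, both available because $1<p,q<\infty$.

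First I would record the easy direction $M^{p,q}\hookrightarrow(B^{p',q'})^{\ast}$ together with $M^{p,q}\hookrightarrow(B^{p',q'})'$. For a $B^{p',q'}$-block $b$ with $\mathrm{supp}(b)\subseteq E$, combining H\"older with the defining inequality $\mathcal{C}(E)^{1/q}\|b\|_{L^{p',q'}}\leq 1$ gives
\begin{align*}
\int|fb|\,dx\leq c\|f\chi_{E}\|_{L^{p,q}}\|b\|_{L^{p',q'}}\leq c\|f\|_{M^{p,q}}\mathcal{C}(E)^{1/q}\|b\|_{L^{p',q'}}\leq c\|f\|_{M^{p,q}}.
\end{align*}
For $g=\sum_{k}\lambda_{k}b_{k}\in B^{p',q'}$, Tonelli then yields $\int|f|\sum_{k}|\lambda_{k}||b_{k}|\,dx\leq c\|f\|_{M^{p,q}}\sum_{k}|\lambda_{k}|$, so $\sum_{k}\lambda_{k}fb_{k}$ converges absolutely to $fg$ in $L^{1}$ and $\int fg\,dx=\sum_{k}\lambda_{k}\int fb_{k}\,dx$. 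Taking the infimum over block decompositions delivers both $\|\mathcal{L}_{f}\|_{(B^{p',q'})^{\ast}}\leq c\|f\|_{M^{p,q}}$ and $\|f\|_{(B^{p',q'})'}\leq c\|f\|_{M^{p,q}}$.

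The substantive work is the two converse inclusions. For $\|f\|_{M^{p,q}}\leq c\|f\|_{(B^{p',q'})'}$, fix a compact $K$ with $\mathcal{C}(K)>0$; for any $h\in L^{p',q'}$ supported in $K$ with $\|h\|_{L^{p',q'}}\leq 1$, the rescaling $b=\mathcal{C}(K)^{-1/q}h$ is a $B^{p',q'}$-block, so $\int|fh|\,dx\leq\mathcal{C}(K)^{1/q}\|f\|_{(B^{p',q'})'}$, and a supremum over such $h$ combined with the Lorentz duality characterization $\|f\chi_{K}\|_{L^{p,q}}\approx\sup\int|fh|\,dx$ closes the estimate. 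For $(B^{p',q'})^{\ast}\hookrightarrow M^{p,q}$, given $\mathcal{L}$ I restrict to functions supported in a bounded set $E$: any such $g\in L^{p',q'}$ is $\mathcal{C}(E)^{1/q}\|g\|_{L^{p',q'}}$ times a block, whence $|\mathcal{L}(g)|\leq\|\mathcal{L}\|\mathcal{C}(E)^{1/q}\|g\|_{L^{p',q'}}$; the duality $(L^{p',q'}(E))^{\ast}\approx L^{p,q}(E)$ then produces $f_{E}\in L^{p,q}(E)$ representing $\mathcal{L}|_{E}$ with $\|f_{E}\|_{L^{p,q}(E)}\leq c\|\mathcal{L}\|\mathcal{C}(E)^{1/q}$. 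Uniqueness of this representative forces $f_{E_{1}}=f_{E_{2}}|_{E_{1}}$ on nested bounded sets, patching into a global $f\in\mathcal{M}(\mathcal{R})$ with $\|f\|_{M^{p,q}}\leq c\|\mathcal{L}\|$; the identity $\mathcal{L}(g)=\int fg\,dx$ then extends from compactly supported $g$ to all of $B^{p',q'}$ by the same $L^{1}$-convergence argument used above.

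Completeness of $B^{p',q'}$ follows the standard atomic recipe: an absolutely summable sequence with $\sum_{n}\|f_{n}\|_{B^{p',q'}}<\infty$ and near-optimal decompositions $f_{n}=\sum_{k}\lambda_{n,k}b_{n,k}$, $\sum_{k}|\lambda_{n,k}|\leq 2\|f_{n}\|_{B^{p',q'}}$, produces a bi-indexed series $\sum_{n,k}\lambda_{n,k}b_{n,k}$ of total $\ell^{1}$ mass at most $2\sum_{n}\|f_{n}\|_{B^{p',q'}}$, which converges pointwise a.e. and simultaneously supplies block decompositions for all tails, yielding norm convergence in $B^{p',q'}$. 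The parallel argument for $\mathcal{B}^{p',q'}$ and $\mathcal{M}^{p,q}$ requires only that every occurrence of $\mathcal{C}(\cdot)^{1/q}$ be replaced by $\mathcal{C}(\cdot)^{1/p}$. I expect the principal hurdle to be the predual surjectivity, specifically the consistent patching of the local representatives $f_{E}$ along an exhaustion of $\mathcal{R}$; this step relies on the uniqueness clause in the Lorentz duality on bounded sets, which is exactly why one cannot relax the hypothesis $1<q<\infty$ to allow the endpoints.
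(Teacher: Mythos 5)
Your overall route matches the paper's: the forward inclusion $M^{p,q}\hookrightarrow(B^{p',q'})^{\ast}\cap(B^{p',q'})'$ via block-by-block H\"older and $\ell^{1}$-summation, and the converse via the continuous restriction of $\mathcal{L}$ to $L^{p',q'}(E)$ for bounded $E$, the local duality $(L^{p',q'}(E))^{\ast}\approx L^{p,q}(E)$, the patching of local representatives along an exhaustion, and the final extension of the integral identity from compactly supported $g$ to all of $B^{p',q'}$ by $L^{1}$-convergence. Two points are glossed over that the paper handles explicitly. First, in your forward estimate you write $\|f\chi_{E}\|_{L^{p,q}}\leq\|f\|_{M^{p,q}}\mathcal{C}(E)^{1/q}$ directly, but a block's support $E$ is merely bounded (and measurable), while $\|\cdot\|_{M^{p,q}}$ is defined as a supremum over compact $K$; replacing $E$ by $\overline{E}$ increases $\mathcal{C}$ in the wrong direction. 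This step requires the identity $\|f\|_{M^{p,q}}=\sup_{E}\|f\chi_{E}\|_{L^{p,q}}/\mathcal{C}(E)^{1/q}$ over all measurable $E$ of finite positive capacity, which the paper establishes as a separate technical lemma (Lemma~\ref{norm switching}) via outer regularity of $\mathcal{C}$ and an inner approximation by compacts. Second, for the Banach space assertion you prove only completeness of the quasi-normed space; you still need the definiteness of $\|\cdot\|_{B^{p',q'}}$, namely that $\|g\|_{B^{p',q'}}=0$ forces $g=0$ a.e.\ -- the paper deduces this by testing against $f\in C_{0}\subseteq M^{p,q}$ using the duality inequality just established. Also, the consistency of the local representatives $f_{E}$ across nested bounded sets is not merely a matter of uniqueness in the Lorentz duality: it is obtained in the paper by testing against continuous functions and invoking the density of $C_{0}$ in $L^{p',q'}(E)$ (Proposition~\ref{lorentz dense}); you should make that tool explicit rather than appealing to uniqueness alone.
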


The block spaces $B_{p}^{\alpha,s}$, which correspond to $M_{p}^{\alpha,s}$, are also introduced in \cite{OP}, and one may infer easily that $B^{p,q}$ generalizes $B_{p}^{\alpha,s}$. Moreover, as suggested in \cite{OP}, there are ``N-type" spaces that correspond to block spaces, which we introduce here. To this end, for any nonnegative function $\omega$ on $\mathcal{R}$, we denote $\|\omega\|_{L^{1}(\mathcal{C})}$ by
\begin{align*}
\|\omega\|_{L^{1}(\mathcal{C})}=\int_{0}^{\infty}\mathcal{C}(\{x\in\mathcal{R}:\omega(x)>t\})dt.
\end{align*}
The space $L^{1}(\mathcal{C})$ consists of those $\omega$ with finite $\|\omega\|_{L^{1}(\mathcal{C})}$. Further, we say that a nonnegative measurable function $\omega$ is a weight if $\omega$ is locally integrable on $\mathcal{R}$ with $\omega>0$ almost everywhere. Subsequently, we define the local Hardy-Littlewood maximal function ${\bf M}^{\rm loc}$ by
\begin{align*}
{\bf M}^{\rm loc}f(x)=\sup_{0<r\leq 1}\frac{1}{|B_{r}(x)|}\int_{B_{r}(x)}|f(y)|dy,\quad x\in\mathbb{R}^{n},
\end{align*}
where $f\in\mathcal{M}(\mathcal{R})$ is locally integrable on $\mathcal{R}$. Then the local Muckenhoupt $A_{1}$ class (see \cite{RV} and \cite{OK4}) consists of all weights $\omega$ such that 
\begin{align*}
{\bf M}^{\rm loc}\omega\leq C\omega
\end{align*}
almost everywhere, where $C$ is a positive constant. The infimum of all such $C$ is denoted by $[\omega]_{A_{1}^{\rm loc}}$. Let $1<p<\infty$ and $1<q<\infty$. The $N^{p,q}$ space is defined to be the set of all functions $f\in\mathcal{M}(\mathcal{R})$ such that 
\begin{align*}
\|f\|_{N^{p,q}}=\inf\{\|f\omega^{-1/q'}\|_{L^{p,q}}:\omega\in L^{1}(\mathcal{C})\cap A_{1}^{\rm loc},\|\omega\|_{L^{1}(\mathcal{C})}\leq 1,[\omega]_{A_{1}^{\rm loc}}\leq\mathfrak{c}\}
\end{align*}
are finite, where $\mathfrak{c}$ is a fixed positive constant depending only on $n,\alpha,s$ defined in (\ref{universal}) below. The connection of $B^{p,q}$ with $N^{p,q}$ is given as follows.
\begin{theorem}\label{embed}
Let $1<p<\infty$ and $1<q<\infty$. If $1<q\leq p<\infty$, then 
\begin{align}\label{first embed}
B^{p,q}\hookrightarrow N^{p,q}.
\end{align}
If $1<p\leq q<\infty$, then
\begin{align}\label{second embed}
N^{p,q}\hookrightarrow B^{p,q}.
\end{align}
\end{theorem}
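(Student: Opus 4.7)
The strategy is dual between the two directions: for \eqref{first embed} I package a block decomposition of $f$ into a single admissible weight, while for \eqref{second embed} I unfold an admissible weight into a block decomposition. Both rely on a preliminary construction. Given any bounded $E\subseteq\mathcal{R}$ with $\mathcal{C}(E)>0$, the weight $\omega_{E}:=c({\bf M}^{\rm loc}\chi_{E})^{\delta}$ for a fixed $\delta\in(0,1)$ and a suitable normalizing $c$ should satisfy $\omega_{E}\geq\chi_{E}$ a.e., $[\omega_{E}]_{A_{1}^{\rm loc}}\leq\mathfrak{c}$, and $\|\omega_{E}\|_{L^{1}(\mathcal{C})}\leq C_{0}\mathcal{C}(E)$: the local Coifman--Rochberg principle (cf.\ \cite{RV}, \cite{OK4}) delivers the first bound, and the capacitary layer-cake formula combined with a weak-type estimate of ${\bf M}^{\rm loc}$ against $\mathcal{C}$ yields the second. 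The universal constant $\mathfrak{c}$ in the definition of $N^{p,q}$ is fixed so that $\omega_{E}$ itself is admissible. Quasi-subadditivity of the Choquet integral $\|\cdot\|_{L^{1}(\mathcal{C})}$ (a consequence of subadditivity of the Bessel capacity) is used to combine such weights.

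For \eqref{first embed}, I start from any block decomposition $f=\sum_{k}\lambda_{k}b_{k}$, write $L:=\sum_{k}|\lambda_{k}|$, and set
\begin{equation*}
\omega=\frac{1}{KL}\sum_{k}\frac{|\lambda_{k}|}{\mathcal{C}(E_{k})}\,\omega_{E_{k}},
\end{equation*}
with $K$ chosen large enough that $\|\omega\|_{L^{1}(\mathcal{C})}\leq 1$; the bound $[\omega]_{A_{1}^{\rm loc}}\leq\mathfrak{c}$ is automatic because positive combinations preserve the $A_{1}^{\rm loc}$ constant. On $E_{k}$ one has $\omega\gtrsim|\lambda_{k}|/(L\mathcal{C}(E_{k}))$, so the block estimate $\|b_{k}\|_{L^{p,q}}\leq\mathcal{C}(E_{k})^{-1/q'}$ forces $\|g_{k}\|_{L^{p,q}}^{q}\lesssim L^{q-1}|\lambda_{k}|$ for $g_{k}:=|\lambda_{k}b_{k}|\omega^{-1/q'}$, and summing yields $\sum_{k}\|g_{k}\|_{L^{p,q}}^{q}\lesssim L^{q}$. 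Since $|f|\omega^{-1/q'}\leq\sum_{k}g_{k}$ pointwise, the closing step is the inequality
\begin{equation*}
\Bigl\|\sum_{k}g_{k}\Bigr\|_{L^{p,q}}^{q}\lesssim\sum_{k}\|g_{k}\|_{L^{p,q}}^{q},
\end{equation*}
which is precisely where $q\leq p$ enters: the distribution-function representation $\|g\|_{L^{p,q}}^{q}\sim\int t^{q-1}|\{g>t\}|^{q/p}\,dt$, combined with the elementary bound $(\sum_{k}a_{k})^{q/p}\leq\sum_{k}a_{k}^{q/p}$ (valid for $q/p\leq 1$), absorbs the possible overlap of the $E_{k}$ after a standard choice of covering multipliers.

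For \eqref{second embed}, I pick an admissible $\omega$ with $\|f\omega^{-1/q'}\|_{L^{p,q}}\leq 2\|f\|_{N^{p,q}}$, set $g:=f\omega^{-1/q'}$, and decompose via joint dyadic level sets $E_{j,k}:=\{2^{j}<|g|\leq 2^{j+1}\}\cap\{2^{k}<\omega\leq 2^{k+1}\}$, each truncated to a large ball (removed at the end by exhaustion). On $E_{j,k}$ one has $\mathcal{C}(E_{j,k})\leq 2^{-k}$ (Chebyshev against $\|\omega\|_{L^{1}(\mathcal{C})}$) and $|f|\sim 2^{j+k/q'}$, so $\lambda_{j,k}:=\mathcal{C}(E_{j,k})^{1/q'}\|f\chi_{E_{j,k}}\|_{L^{p,q}}\sim 2^{j}|E_{j,k}|^{1/p}$ turns $b_{j,k}:=f\chi_{E_{j,k}}/\lambda_{j,k}$ into a $B^{p,q}$ block. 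The convexity of $t\mapsto t^{q/p}$ (for $q/p\geq 1$) gives $\sum_{k}|E_{j,k}|^{q/p}\leq|G_{j}|^{q/p}$ with $G_{j}:=\bigcup_{k}E_{j,k}$, whence $\sum_{j,k}\lambda_{j,k}^{q}\lesssim\|g\|_{L^{p,q}}^{q}$. Converting this $\ell^{q}$ control into the $\ell^{1}$ summability demanded by the block norm is the technical heart of this direction and the part I expect to be the main obstacle of the theorem; I anticipate handling it by coalescing pieces of comparable $\lambda_{j,k}$ into fewer, coarser blocks whose coefficients form a geometric series summing back to the $\ell^{q}$ quantity, with $p\leq q$ once more underwriting the decay.

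The recurring theme is that $q\leq p$ versus $p\leq q$ is precisely the threshold at which the convexity/concavity inequality $(\sum_{k}a_{k})^{q/p}\lessgtr\sum_{k}a_{k}^{q/p}$ flips in the useful direction, and so the hypotheses of the theorem are exactly what is needed to reconcile the $\ell^{1}$ summation native to the block space with the $\ell^{q}$- or $\ell^{p}$-summation native to $L^{p,q}$. The remaining technical ingredients---quasi-subadditivity of the Choquet integral, the local Coifman--Rochberg theorem, and the capacitary weak-type behaviour of ${\bf M}^{\rm loc}$---carry the full bookkeeping.
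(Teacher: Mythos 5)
Your construction runs into two genuine gaps, one in each direction, both at the step you flag as the ``closing'' or ``technical heart.''

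\emph{Direction \eqref{first embed}.} After bounding each $g_{k}=|\lambda_{k}b_{k}|\,\omega^{-1/q'}$ separately, you invoke
$\bigl\|\sum_{k}g_{k}\bigr\|_{L^{p,q}}^{q}\lesssim\sum_{k}\|g_{k}\|_{L^{p,q}}^{q}$ and trace it to $(\sum a_{k})^{q/p}\leq\sum a_{k}^{q/p}$. That elementary inequality does give the claim \emph{when the $g_{k}$ have pairwise disjoint supports}, because then $|\{\sum g_{k}>t\}|=\sum|\{g_{k}>t\}|$. But a $B^{p,q}$ block decomposition imposes no disjointness on the $E_{k}$; taking all $b_{k}$ equal with common support shows the inequality fails badly. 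The paper avoids this by applying H\"older's inequality \emph{pointwise, before taking norms}: writing $|\lambda_{k}||b_{k}|=|\lambda_{k}|\,\omega_{k}^{1/q'}\,|b_{k}|\,\omega_{k}^{-1/q'}$ and splitting the sum as
$|f|\leq\bigl(\sum_{k}|\lambda_{k}|\omega_{k}\bigr)^{1/q'}\bigl(\sum_{k}|\lambda_{k}||b_{k}|^{q}\omega_{k}^{1-q}\bigr)^{1/q}$,
so that the second factor is already a $q$-th power sum and the $L^{p/q,1}$ quasi-triangle inequality applies regardless of overlaps. That Fubini-of-sum-and-power step is indispensable, not cosmetic, and your bookkeeping cannot be repaired without it (or an equivalent pointwise device). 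Note also a secondary issue: the constant $\mathfrak{c}$ in the definition of $N^{p,q}$ is fixed by the nonlinear potential construction in (\ref{universal}), so the admissibility bound $[\omega_{E}]_{A_{1}^{\rm loc}}\leq\mathfrak{c}$ for your Coifman--Rochberg-type weight $({\bf M}^{\rm loc}\chi_{E})^{\delta}$ is not automatic and would have to be squared with the same $\mathfrak{c}$; the paper simply uses $\omega_{E}=(V^{E})^{\delta}/\mathcal{C}(E)$.

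\emph{Direction \eqref{second embed}.} You arrive at $\sum_{j,k}\lambda_{j,k}^{q}\lesssim\|g\|_{L^{p,q}}^{q}$ and observe that the block norm needs $\sum_{j,k}|\lambda_{j,k}|$, then say you ``anticipate'' converting $\ell^{q}$ into $\ell^{1}$ by coalescing. There is no such conversion for $q>1$: $\ell^{q}$ control alone carries no $\ell^{1}$ information, and coalescing blocks does not change this. The paper never passes through an $\ell^{q}$ bound; it decomposes only on levels of $\omega$ (together with annuli $D_{l}$ for boundedness), extracts $\omega^{1/q'}\lesssim 2^{k/q'}$ on $E_{k}$, and then applies H\"older on the \emph{sum over $(k,l)$} of $\lambda_{k,l}$ directly:
\begin{align*}
\sum_{k,l}\lambda_{k,l}\leq\Bigl(\sum_{k,l}\|f\omega^{-1/q'}\chi_{E_{k}\cap D_{l}}\|_{L^{p,q}}^{q}\Bigr)^{\frac{1}{q}}\Bigl(\sum_{k,l}2^{k}\mathcal{C}(E_{k}\cap D_{l})\Bigr)^{\frac{1}{q'}}.
\end{align*}
The second factor is then bounded by $C(n,\alpha,s)$ via quasi-additivity of $\mathcal{C}$ and $\sum_{k}2^{k}\mathcal{C}(E_{k})\lesssim\|\omega\|_{L^{1}(\mathcal{C})}\leq 1$, while the first is controlled by a disjoint-support $q$-concavity estimate for $L^{p,q}$ (valid because $p\leq q$). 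In effect the $\ell^{q'}$ weight $2^{k}\mathcal{C}(E_{k}\cap D_{l})$ is the missing dual sequence your plan has no analogue of; H\"older against it is what closes the $\ell^{1}$ sum. Without that mechanism the proof does not complete.
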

In contrast, one is tempted to define the ``N-type" spaces that correspond to $\mathcal{M}^{p,q}$ by 
\begin{align*}
\inf_{\omega}\|f\omega^{-1/p'}\|_{L^{p,q}},
\end{align*}
where $\omega$ are those admissible weights. However, it will be seen from the proof of Theorem \ref{embed} that these ``N-type" spaces are neither comparable to $\mathcal{B}^{p,q}$ nor preduals of $\mathcal{M}^{p,q}$. The ``N-type" spaces $N_{p}^{\alpha,s}$ introduced in \cite{OP} are in fact Banach function spaces (see section $3$ below for the definition). One of the main properties of Banach function spaces is the Fatou's property, i.e., for any functions $0\leq f_{1}\leq f_{2}\leq\cdots$, the function $f=\sup_{n\in\mathbb{N}}f_{n}$ satisfies 
\begin{align}\label{fatou to describe}
\|f\|=\sup_{n\in\mathbb{N}}\|f_{n}\|,
\end{align}
where $\left\|\cdot\right\|$ is the norm induced by the Banach function space. Given a quasi-norm $\left\|\cdot\right\|_{X}$ (here $X$ is not necessary a Banach function space), if there exists a positive constant $\kappa$ such that 
\begin{align*}
\|f\|_{X}\leq\kappa\cdot\sup_{n\in\mathbb{N}}\|f_{n}\|_{X},
\end{align*}
where $f,f_{n}$ are arbitrary functions as in (\ref{fatou to describe}), then we say $X$ has the weak Fatou's property. Below we prove that $N^{p,q}$ possesses the weak Fatou's property for certain range of $p,q$. Besides, it will be shown that $N^{p,q}$ is normable for some $p,q$, i.e., $N^{p,q}\approx X$ for some normed space $X$.
\begin{proposition}\label{fatou property}
Let $1<p<\infty$ and $1<q<\infty$. Then $N^{p,q}$ is a quasi-norm space that satisfying the weak Fatou's property. If $1<q\leq p<\infty$, then $N^{p,q}$ is a complete normable space.
\end{proposition}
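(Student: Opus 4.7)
My plan addresses the four sub-claims in order: quasi-norm axioms and weak Fatou (for all $1<p,q<\infty$), then completeness and normability in the range $1<q\le p<\infty$.

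\textbf{Quasi-norm axioms.} Positive homogeneity and positive definiteness follow directly from the corresponding properties of $\|\cdot\|_{L^{p,q}}$ together with the a.e.\ positivity of admissible weights. For the quasi-triangle inequality, given $f_{1},f_{2}\in N^{p,q}$ and admissible weights $\omega_{1},\omega_{2}$ nearly attaining the respective infima, I would take $\omega=(\omega_{1}+\omega_{2})/4$. The level-set inclusion $\{\omega_{1}+\omega_{2}>t\}\subseteq\{\omega_{1}>t/2\}\cup\{\omega_{2}>t/2\}$ combined with the subadditivity of $\mathcal{C}$ yields $\|\omega_{1}+\omega_{2}\|_{L^{1}(\mathcal{C})}\le 2(\|\omega_{1}\|_{L^{1}(\mathcal{C})}+\|\omega_{2}\|_{L^{1}(\mathcal{C})})\le 4$, hence $\|\omega\|_{L^{1}(\mathcal{C})}\le 1$. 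The sublinearity ${\bf M}^{\mathrm{loc}}(\omega_{1}+\omega_{2})\le{\bf M}^{\mathrm{loc}}\omega_{1}+{\bf M}^{\mathrm{loc}}\omega_{2}\le\mathfrak{c}(\omega_{1}+\omega_{2})$, stable under scalar multiplication, preserves $[\omega]_{A_{1}^{\mathrm{loc}}}\le\mathfrak{c}$. Since $\omega\ge\omega_{i}/4$, we have $\omega^{-1/q'}\le 4^{1/q'}\omega_{i}^{-1/q'}$, and the $L^{p,q}$ quasi-triangle inequality then closes out the estimate.

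\textbf{Weak Fatou and completeness.} Given $0\le f_{n}\uparrow f$ with $M=\sup_{n}\|f_{n}\|_{N^{p,q}}<\infty$, the monotone convergence theorem applied to the distribution functions gives $\|f\omega^{-1/q'}\|_{L^{p,q}}=\sup_{n}\|f_{n}\omega^{-1/q'}\|_{L^{p,q}}$ for any single admissible $\omega$. It therefore suffices to produce one admissible $\omega$ satisfying $\|f_{n}\omega^{-1/q'}\|_{L^{p,q}}\lesssim M$ uniformly in $n$. Selecting near-optimal $\omega_{n}$ for each $f_{n}$, I would extract an a.e.\ accumulation point $\omega$ via a compactness argument on the admissible class (which is bounded in $L^{1}(\mathcal{C})$ and in $A_{1}^{\mathrm{loc}}$-constant); the monotonicity $f_{n}\le f_{m}$ for $n\le m$ ensures that each $\omega_{m}$ already controls all earlier $f_{n}$, and the lower semicontinuity of $\|\cdot\|_{L^{p,q}}$ under pointwise convergence propagates the uniform bound to $\omega$. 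For completeness in the range $1<q\le p<\infty$, I would use the standard route: every absolutely summable sequence $\{g_{n}\}$ has partial sums converging pointwise a.e., and applying weak Fatou to the tails yields convergence in the quasi-norm.

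\textbf{Normability for $1<q\le p<\infty$.} In this range, $\|\cdot\|_{L^{p,q}}$ itself is normable, being equivalent to the genuine norm $\|g\|^{(p,q)}=p^{1/q}(\int_{0}^{\infty}(g^{**}(t)t^{1/p})^{q}\frac{dt}{t})^{1/q}$. I would consider the Minkowski functional $\|f\|^{\circ}=\inf\{\sum_{k}\|f_{k}\|_{N^{p,q}}:f=\sum_{k}f_{k}\text{ pointwise a.e.}\}$, which is a seminorm automatically dominated by $\|\cdot\|_{N^{p,q}}$. The reverse equivalence $\|f\|_{N^{p,q}}\le C\|f\|^{\circ}$ would be established by substituting $\|g\|^{(p,q)}$ into the definition of $N^{p,q}$ (giving an equivalent quasi-norm with sharper triangle behavior) and exploiting the weak Fatou property to aggregate near-optimal decompositions into a single common weight.

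The main obstacle is the weak Fatou step, and the analogous issue for normability: naive combinations of admissible weights—sums, suprema, or convex combinations—each violate at least one of the three admissibility conditions, so the argument must proceed through a compactness/limit argument on the admissible class rather than an explicit formula. Once a common dominating weight is produced, the rest of the proof is essentially bookkeeping.
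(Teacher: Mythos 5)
Your handling of the quasi-triangle inequality via $\omega=(\omega_{1}+\omega_{2})/4$ is fine and is a perfectly workable explicit construction (note that this already shows finite convex combinations of admissible weights \emph{do} preserve admissibility up to fixed constants; your closing remark that ``convex combinations each violate at least one of the three admissibility conditions'' contradicts your own step and is not what blocks the argument). However, there are three genuine gaps.

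First, positive definiteness is not ``direct.'' That $\|f\|_{N^{p,q}}=0$ forces $f=0$ a.e.\ requires producing a \emph{single} admissible $\omega$ with $\|f\omega^{-1/q'}\|_{L^{p,q}}=0$, and the infimum defining the quasi-norm is not attained. From $\|f\omega_{n}^{-1/q'}\|_{L^{p,q}}\to 0$ alone you cannot conclude $f=0$, because the sets where $\omega_{n}$ is large (hence $\omega_{n}^{-1/q'}$ is small) vary with $n$. The paper handles this with exactly the same Koml\'os device used for weak Fatou: take the Ces\`aro means $\sigma_{n}=(\omega_{1}+\cdots+\omega_{n})/n$, extract an a.e.\ limit $\omega$ via Koml\'os' theorem, verify it is admissible, and use Jensen's inequality on the convex map $u\mapsto u^{-1/q'}$ to transfer the bound. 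Your outline simply skips this.

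Second, the weak Fatou step is the crux and your sketch leaves the hard part unnamed. ``An a.e.\ accumulation point via a compactness argument on the admissible class'' is not a standard fact for this class: the admissible weights are bounded in $L^{1}(\mathcal{C})\hookrightarrow L^{1}$, but that bound gives no subsequential a.e.\ convergence by itself. The tool that works is Koml\'os' theorem on Ces\`aro averages of $L^{1}$-bounded sequences, combined with (i) sublinearity of ${\bf M}^{\rm loc}$ and Fatou's lemma in $L^{1}(\mathcal{C})$ to keep the Ces\`aro limit admissible, and (ii) Jensen's inequality to bound $\bigl(\frac{1}{n}\sum_{i}\omega_{i}\bigr)^{-1/q'}\le\frac{1}{n}\sum_{i}\omega_{i}^{-1/q'}$ so that the weighted $L^{p,q}$ norms combine. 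Your ``lower semicontinuity propagates the uniform bound'' gesture does not specify how the near-optimal weights for different $f_{n}$ are merged; without Jensen and the Ces\`aro averaging the argument does not close.

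Third, your normability sketch diverges from what actually works. You define the correct auxiliary functional (the block-type seminorm $\|f\|^{\circ}=\inf\{\sum_{k}\|f_{k}\|_{N^{p,q}}:f=\sum_{k}f_{k}\}$), but the paper establishes $\|f\|_{N^{p,q}}\lesssim\|f\|^{\circ}$ not through weak Fatou but through a direct computation: given $f=\sum_{k}\lambda_{k}b_{k}$ with $\|b_{k}\|_{N^{p,q}}\le 1$, pick near-optimal $\omega_{k}$, form the convex combination $\omega=\sum_{k}|\lambda_{k}|\omega_{k}/\sum_{k}|\lambda_{k}|$, apply H\"older's inequality in the form $|f|\le(\sum_{k}|\lambda_{k}|\omega_{k})^{1/q'}(\sum_{k}|\lambda_{k}||b_{k}|^{q}\omega_{k}^{1-q})^{1/q}$, and use that $L^{p/q,1}$ is quasi-additive precisely because $p/q\ge 1$. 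That is where the hypothesis $q\le p$ enters; your proposal never uses it. ``Exploiting the weak Fatou property to aggregate near-optimal decompositions'' is not a step one can carry out as stated. Completeness then follows from the block-type norm being automatically complete, not (as you suggest) from applying weak Fatou to a.e.\ convergent partial sums, which in a quasi-normed space requires additional justification.
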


Denote by $C_{0}$ the space of all compactly supported continuous functions on $\mathcal{R}$. Then the density of $C_{0}$ in $B^{p,q},N^{p,q}$ is addressed in the following proposition.
\begin{proposition}\label{density}
The space $C_{0}$ is dense in both $B^{p,q}$ and $N^{p,q}$ for $1<p<\infty$ and $1<q<\infty$.
\end{proposition}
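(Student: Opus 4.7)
The plan is to handle $B^{p,q}$ and $N^{p,q}$ separately, exploiting in both cases that the difficulty reduces to approximating elementary objects (single blocks, resp.\ bounded compactly supported functions) in $L^{p,q}$, where density of $C_0$ is classical.

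\textbf{For $B^{p,q}$:} Given $f \in B^{p,q}$, I would pick a near-optimal block decomposition $f = \sum_k \lambda_k b_k$ with $\sum_k |\lambda_k| \leq 2\|f\|_{B^{p,q}}$; the partial sums converge in $B^{p,q}$ by the tail estimate $\|f - \sum_{k=1}^N \lambda_k b_k\|_{B^{p,q}} \leq \sum_{k > N}|\lambda_k|$, reducing the task to a single block $b$ supported in a bounded set $E$. Setting $E' = E + \overline{B_1(0)}$ and invoking density of $C_0$ in $L^{p,q}$ (via simple functions supported in $E$ and Lusin's theorem), I would produce $g_n \in C_0$ with ${\rm supp}(g_n) \subseteq E'$ and $\|b - g_n\|_{L^{p,q}} \to 0$. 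Since $(b - g_n)$ is supported in $E'$, it becomes a $B^{p,q}$ block after dividing by $\mathcal{C}(E')^{1/q'}\|b - g_n\|_{L^{p,q}}$, whence $\|b - g_n\|_{B^{p,q}} \leq \mathcal{C}(E')^{1/q'}\|b - g_n\|_{L^{p,q}} \to 0$.

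\textbf{For $N^{p,q}$:} I would proceed in two reductions. First, given $f \in N^{p,q}$ and an admissible $\omega$ with $\|f\omega^{-1/q'}\|_{L^{p,q}} \leq 2\|f\|_{N^{p,q}}$, the truncations $f_k := f \chi_{\{|f| \leq k\} \cap B_k(0)}$ satisfy $(f - f_k)\omega^{-1/q'} \to 0$ pointwise with $L^{p,q}$-integrable majorant, so dominated convergence in $L^{p,q}$ (available for $q < \infty$) gives $\|f - f_k\|_{N^{p,q}} \to 0$. Second, for bounded $f$ with compact support $K$, Lusin's theorem produces, for each $\varepsilon > 0$, a function $g \in C_0$ with ${\rm supp}(g) \subseteq K' := K + \overline{B_1(0)}$, $\|g\|_\infty \leq \|f\|_\infty$, and $|\{f \neq g\}| < \varepsilon$. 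To estimate $\|f - g\|_{N^{p,q}}$, I would construct a single admissible weight $\omega_0$ with $\omega_0 \geq c > 0$ on $K'$; then, from the definition of the $N^{p,q}$ norm and monotonicity of $L^{p,q}$,
\[
\|f - g\|_{N^{p,q}} \leq \|(f - g)\omega_0^{-1/q'}\|_{L^{p,q}} \leq 2\|f\|_\infty c^{-1/q'}\|\chi_{\{f \neq g\}}\|_{L^{p,q}} \lesssim \varepsilon^{1/p}.
\]

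The hard part will be constructing $\omega_0$, which must simultaneously (i) lie in $L^1(\mathcal{C})$ with $\|\omega_0\|_{L^1(\mathcal{C})} \leq 1$, (ii) belong to $A_1^{\rm loc}$ with $[\omega_0]_{A_1^{\rm loc}} \leq \mathfrak{c}$, and (iii) be bounded below by a positive constant on $K'$. The natural candidate is $\omega_0 = \lambda ({\bf M}^{\rm loc}\chi_{K''})^\delta$ for a compact $K'' \supset K'$ and small $\delta \in (0,1)$: the local Coifman--Rochberg theorem delivers (ii) with constant depending only on $n$ and $\delta$, a sufficiently small $\delta$ brings this constant below the prescribed $\mathfrak{c}$ from (\ref{universal}), boundedness and bounded support of $\omega_0$ give (i) after normalization of $\lambda$, and choosing $K''$ with sufficient buffer around $K'$ secures (iii) via the standard pointwise lower bound for the local maximal function of an indicator. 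Verifying the precise compatibility of $\delta$ with the universal constant $\mathfrak{c}$ is where the technical care lies.
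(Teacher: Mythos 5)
Your argument for $B^{p,q}$ is essentially the paper's, and in fact a bit more careful: the paper truncates the block decomposition and approximates each block $b_k$ by $\varphi_k\in C_0$ in $L^{p,q}$, but leaves implicit the passage from $\|b_k-\varphi_k\|_{L^{p,q}}$ to $\|b_k-\varphi_k\|_{B^{p,q}}$; you supply exactly that, by observing that $b_k-\varphi_k$ is supported in a fixed bounded set $E'_k$ and is therefore a multiple of a $B^{p,q}$ block with factor $\mathcal{C}(E'_k)^{1/q'}\|b_k-\varphi_k\|_{L^{p,q}}$. That part is fine.

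For $N^{p,q}$ the reduction steps (truncation by height and to a ball, dominated convergence in $L^{p,q}$, then Lusin) match the paper, but your construction of the admissible weight $\omega_0$ has a genuine gap. You take $\omega_{0}=\lambda(\mathbf{M}^{\mathrm{loc}}\chi_{K''})^{\delta}$. Because $\mathbf{M}^{\mathrm{loc}}$ only averages over radii $r\le 1$, $\mathbf{M}^{\mathrm{loc}}\chi_{K''}$ vanishes identically outside the $1$-neighborhood of $K''$, so $\omega_0$ is compactly supported. It therefore fails the paper's standing requirement that a weight be positive almost everywhere, which is built into the definition of $A_1^{\mathrm{loc}}$; more concretely, for $x$ just outside $\operatorname{supp}\omega_0$ one has $\mathbf{M}^{\mathrm{loc}}\omega_0(x)>0=\omega_0(x)$, so $\mathbf{M}^{\mathrm{loc}}\omega_0\le C\omega_0$ cannot hold. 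Replacing $\mathbf{M}^{\mathrm{loc}}$ by the global maximal function fixes the positivity but breaks $L^1(\mathcal{C})$: $(\mathbf{M}\chi_{K''})^{\delta}$ decays like $|x|^{-n\delta}$, so $\mathcal{C}(\{(\mathbf{M}\chi_{K''})^{\delta}>t\})$ grows like $t^{-1/\delta}$ as $t\to 0^+$ (large balls have capacity $\sim r^{n}$), and $\int_{0}^{1}t^{-1/\delta}\,dt=\infty$. So neither variant is admissible, and you would still have the $\mathfrak{c}$-compatibility question to settle on top of this. The paper's own choice sidesteps every one of these problems: it takes $\omega_0$ to be the normalized nonlinear potential $(V^{E})^{\delta}/\mathcal{C}(E)$ of a ball $E$ containing the supports. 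Since the Bessel kernel is strictly positive and decays exponentially, $(V^{E})^{\delta}$ is a genuine weight lying in $L^1(\mathcal{C})$, and the constant $\mathfrak{c}$ in (\ref{universal}) is \emph{defined} precisely so that $[(V^E)^\delta]_{A_1^{\mathrm{loc}}}\le\mathfrak{c}$; moreover $(V^{E})^{\delta}\ge 1$ on $E$, which gives your lower bound (iii) for free. You should replace the Coifman--Rochberg weight with this nonlinear potential; then the rest of your argument goes through.
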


As in \cite{OP}, we will prove that $N^{p,q}$ is a predual of $M^{p,q}$. To this end, we will characterize $M^{p,q}$ in terms of $A_{1}^{\rm loc}$ weights, and use this characterization to prove the duality result. 
\begin{theorem}\label{char M}
Let $1<q\leq p<\infty$ and
\begin{align}\label{M char}
M=\sup\{\|f\omega^{\frac{1}{q}}\|_{L^{p,q}}:\omega\in L^{1}(\mathcal{C})\cap A_{1}^{\rm loc},\|\omega\|_{L^{1}(\mathcal{C})}\leq 1,[\omega]_{A_{1}^{\rm loc}}\leq\mathfrak{c}\}.
\end{align}
Then 
\begin{align*}
c^{-1}\|f\|_{M^{p,q}}\leq M\leq c\|f\|_{M^{p,q}},
\end{align*}
where $c$ is a positive constant depending only on $n,p,q$ and the parameters $\alpha,s$ in $\mathcal{C}={\rm Cap}_{\alpha,s}(\cdot)$.
\end{theorem}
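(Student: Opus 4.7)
My plan is to reformulate the weighted Lorentz norm so that level-set decompositions of $\omega$ become accessible. Starting from the layer-cake expression for $\|f\omega^{1/q}\|_{L^{p,q}}^{q}$ and substituting $u=t^{q}$ in the outer integral gives
\begin{align*}
\|f\omega^{1/q}\|_{L^{p,q}}^{q}=\frac{p}{q}\int_{0}^{\infty}\bigl|\{x\in\mathcal{R}:|f(x)|^{q}\omega(x)>u\}\bigr|^{q/p}\,du=\bigl\||f|^{q}\omega\bigr\|_{L^{p/q,1}}.
\end{align*}
The hypothesis $1<q\leq p<\infty$ is exactly what makes $L^{p/q,1}$ normable, with a Banach function norm equivalent to its quasi-norm, and this is the feature that drives the whole argument.

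For the direction $M\leq c\,\|f\|_{M^{p,q}}$, I use the pointwise identity $\omega(x)=\int_{0}^{\infty}\chi_{\{\omega>t\}}(x)\,dt$ and apply Minkowski's integral inequality inside $L^{p/q,1}$ to obtain
\begin{align*}
\bigl\||f|^{q}\omega\bigr\|_{L^{p/q,1}}\leq c\int_{0}^{\infty}\bigl\||f|^{q}\chi_{\{\omega>t\}}\bigr\|_{L^{p/q,1}}\,dt = c\int_{0}^{\infty}\|f\chi_{\{\omega>t\}}\|_{L^{p,q}}^{q}\,dt.
\end{align*}
Inner regularity of $\mathcal{C}$ on Borel sets lets me replace each level set $\{\omega>t\}$ by an increasing family of compact subsets, so $\|f\chi_{\{\omega>t\}}\|_{L^{p,q}}^{q}\leq\|f\|_{M^{p,q}}^{q}\,\mathcal{C}(\{\omega>t\})$. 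Integrating in $t$ and invoking $\|\omega\|_{L^{1}(\mathcal{C})}\leq 1$ yields $\||f|^{q}\omega\|_{L^{p/q,1}}\leq c\,\|f\|_{M^{p,q}}^{q}$, hence the desired upper bound after taking a $q$-th root and the supremum over admissible $\omega$.

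For the reverse bound $\|f\|_{M^{p,q}}\leq c\,M$, I fix a compact $K$ with $\mathcal{C}(K)>0$ and appeal to the capacitary extension underlying the constant $\mathfrak{c}$ in (\ref{universal}): there should be a weight $\omega_{K}\in L^{1}(\mathcal{C})\cap A_{1}^{\mathrm{loc}}$ with $\omega_{K}\geq\chi_{K}$, $\|\omega_{K}\|_{L^{1}(\mathcal{C})}\leq c\,\mathcal{C}(K)$, and $[\omega_{K}]_{A_{1}^{\mathrm{loc}}}\leq\mathfrak{c}$. Then $\tilde\omega_{K}:=\omega_{K}/(c\,\mathcal{C}(K))$ is admissible in (\ref{M char}) and satisfies $\tilde\omega_{K}^{1/q}\geq(c\,\mathcal{C}(K))^{-1/q}\chi_{K}$, so $\|f\chi_{K}\|_{L^{p,q}}\leq(c\,\mathcal{C}(K))^{1/q}M$; taking the supremum in $K$ closes the argument.

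The main technical point is the second step, namely obtaining Minkowski's integral inequality in $L^{p/q,1}$ with a usable constant. This is precisely where the restriction $q\leq p$ becomes essential: for $p=q$ it collapses to Fubini in $L^{1}$, while for $p>q$ one must pass to the Banach-space renorming of $L^{p/q,1}$ (for example through the maximal function $f^{\ast\ast}$, or via the associate relation with $L^{(p/q)',\infty}$). The capacitary extension used in the reverse direction should already be available from the construction that fixes $\mathfrak{c}$ in (\ref{universal}), and the replacement of level sets by compacts is routine given the inner regularity of Bessel capacities on Borel sets.
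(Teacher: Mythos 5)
Your argument is correct, and the first direction ($M\leq c\|f\|_{M^{p,q}}$) takes a genuinely different and more direct route than the paper. The paper proves this bound by factoring through the duality chain: it invokes $N^{p',q'}\hookrightarrow B^{p',q'}$ (a consequence of Theorem \ref{embed}), the predual relation $(B^{p',q'})^{\ast}\approx M^{p,q}$, and $(L^{p',q'})'\approx L^{p,q}$, testing $\|f\omega^{1/q}\|_{L^{p,q}}$ against $L^{p',q'}$ functions $g$ and then transferring $\omega^{1/q}g$ through $B^{p',q'}$ and $N^{p',q'}$ back to $L^{p',q'}$ by choosing the trial weight $\omega$ itself in the $N^{p',q'}$ infimum. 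Your approach unpacks this into a single computation: the identity $\|f\omega^{1/q}\|_{L^{p,q}}^{q}=\||f|^{q}\omega\|_{L^{p/q,1}}$, the layer-cake decomposition $\omega=\int_{0}^{\infty}\chi_{\{\omega>t\}}\,dt$, and Minkowski's integral inequality in $L^{p/q,1}$ (valid since $p/q\geq 1$ makes this space normable via $\Gamma_{1}^{p/q,1}$), followed by the capacitary estimate $\|f\chi_{\{\omega>t\}}\|_{L^{p,q}}^{q}\leq\|f\|_{M^{p,q}}^{q}\mathcal{C}(\{\omega>t\})$ via Lemma \ref{norm switching} and integration against $\|\omega\|_{L^{1}(\mathcal{C})}\leq 1$. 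This is cleaner and also makes explicit that the $A_{1}^{\mathrm{loc}}$ constraint is not needed for this direction; it only shrinks the supremum defining $M$. The level-set decomposition you use is essentially the one buried inside the paper's proof of $(\ref{second embed})$ in Theorem \ref{embed}, so the two arguments have common ancestry, but yours avoids constructing and citing the intermediate block and $N$-type spaces. The reverse direction is essentially identical to the paper's: the weight $\omega_{K}$ you posit is the normalized nonlinear potential $(V^{K})^{\delta}/\mathcal{C}(K)$, whose $L^{1}(\mathcal{C})$-bound and $A_{1}^{\mathrm{loc}}$-bound are exactly the estimates surrounding (\ref{universal}) that are cited from \cite{OP}; it would strengthen the write-up to name this object explicitly rather than say ``there should be a weight.''
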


As promised, the duality of $N^{p,q}$ is given as follows.
\begin{theorem}\label{dual N}
Let $1<p<\infty$ and $1<q<\infty$. Then it holds that 
\begin{align}\label{first predual N}
(N^{p',q'})^{\ast}\hookrightarrow M^{p,q}.
\end{align}
If $1<q\leq p<\infty$, then the following isomorphisms
\begin{align}\label{predual}
(N^{p',q'})^{\ast}\approx M^{p,q}\approx(N^{p',q'})'
\end{align}
hold and 
\begin{align}\label{chain}
N^{p',q'}\hookrightarrow B^{p',q'}\hookrightarrow (M^{p,q})'.
\end{align}
\end{theorem}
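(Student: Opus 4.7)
The plan is to read off a representing function for any functional on $N^{p',q'}$ by running Lorentz-space duality through every admissible weight, and then extract the reverse implication from the characterization of $M^{p,q}$ in Theorem~\ref{char M}. Concretely, for each weight $\omega\in L^{1}(\mathcal{C})\cap A_{1}^{\rm loc}$ with $\|\omega\|_{L^{1}(\mathcal{C})}\leq 1$ and $[\omega]_{A_{1}^{\rm loc}}\leq\mathfrak{c}$, the very definition of $\|\cdot\|_{N^{p',q'}}$ shows that $h\mapsto h\omega^{1/q}$ is a contraction from $L^{p',q'}$ into $N^{p',q'}$. Given $\mathcal{L}\in(N^{p',q'})^{\ast}$, the pull-back $h\mapsto\mathcal{L}(h\omega^{1/q})$ is thus a bounded functional on $L^{p',q'}$, so Lorentz duality yields $G_{\omega}\in L^{p,q}$ with $\|G_{\omega}\|_{L^{p,q}}\leq C\|\mathcal{L}\|$ and $\mathcal{L}(h\omega^{1/q})=\int hG_{\omega}\,dx$. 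Setting $g_{\omega}:=\omega^{-1/q}G_{\omega}$ (legal since weights are positive a.e.), and testing against bounded compactly supported $f$ on which two admissible $\omega_{1},\omega_{2}$ are uniformly bounded below, shows $g_{\omega_{1}}=g_{\omega_{2}}$ on that set; exhausting $\mathcal{R}$ by such pieces produces a single $g$ independent of $\omega$. To bound $\|g\|_{M^{p,q}}$, I will associate to each compact $K$ with $\mathcal{C}(K)>0$ an admissible weight $\omega_{K}$ satisfying $\omega_{K}\geq c\,\mathcal{C}(K)^{-1}\chi_{K}$ almost everywhere, obtained as a small power of the local maximal function of a near-extremal capacitary density for $K$, exactly the construction behind $B^{p,q}\hookrightarrow N^{p,q}$ in Theorem~\ref{embed}. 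This gives
\begin{align*}
\|g\chi_{K}\|_{L^{p,q}}\leq c\,\mathcal{C}(K)^{1/q}\|g\omega_{K}^{1/q}\chi_{K}\|_{L^{p,q}}\leq c\,\mathcal{C}(K)^{1/q}\|G_{\omega_{K}}\|_{L^{p,q}}\leq C\,\mathcal{C}(K)^{1/q}\|\mathcal{L}\|,
\end{align*}
so $\|g\|_{M^{p,q}}\leq C\|\mathcal{L}\|$ and (\ref{first predual N}) is proved for every $1<p,q<\infty$.

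For the reverse embedding $M^{p,q}\hookrightarrow(N^{p',q'})^{\ast}$ under the hypothesis $1<q\leq p<\infty$, Lorentz-space H\"older against any admissible $\omega$ combined with Theorem~\ref{char M} gives
\begin{align*}
\Big|\int fg\,dx\Big|\leq C\|g\omega^{1/q}\|_{L^{p,q}}\|f\omega^{-1/q}\|_{L^{p',q'}}\leq C\|g\|_{M^{p,q}}\|f\omega^{-1/q}\|_{L^{p',q'}};
\end{align*}
this is exactly the place where the restriction $q\leq p$ enters, through Theorem~\ref{char M}. Taking the infimum over admissible $\omega$ yields $|\mathcal{L}_{g}(f)|\leq C\|g\|_{M^{p,q}}\|f\|_{N^{p',q'}}$, which completes the first isomorphism in (\ref{predual}). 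For the K\"othe-dual isomorphism $M^{p,q}\approx(N^{p',q'})'$, the same computation upgrades to $\int|fg|\,dx\leq C\|g\|_{M^{p,q}}\|f\|_{N^{p',q'}}$ once $f$ is replaced by $f\cdot\operatorname{sign}(fg)$ (a substitution that leaves $\|f\|_{N^{p',q'}}$ intact because the defining infimum depends only on $|f|$), proving $M^{p,q}\hookrightarrow(N^{p',q'})'$; conversely, any $g\in(N^{p',q'})'$ induces a continuous functional on $N^{p',q'}$ with operator norm at most $\|g\|_{(N^{p',q'})'}$, whence $\|g\|_{M^{p,q}}\leq C\|g\|_{(N^{p',q'})'}$ by the first isomorphism already established.

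Finally, (\ref{chain}) concatenates two independent facts. The embedding $N^{p',q'}\hookrightarrow B^{p',q'}$ is the $(p,q)\mapsto(p',q')$ instance of the second half of Theorem~\ref{embed}, applicable because $1<q\leq p<\infty$ forces $1<p'\leq q'<\infty$. For $B^{p',q'}\hookrightarrow (M^{p,q})'$, fix $g\in M^{p,q}$ and a block decomposition $f=\sum_{k}\lambda_{k}b_{k}$ with each $b_{k}$ supported in $E_{k}$ and $\mathcal{C}(E_{k})^{1/q}\|b_{k}\|_{L^{p',q'}}\leq 1$; Lorentz H\"older applied term-by-term yields $\int|b_{k}g|\,dx\leq C\|b_{k}\|_{L^{p',q'}}\|g\chi_{E_{k}}\|_{L^{p,q}}\leq C\|g\|_{M^{p,q}}$, so summing against $\{\lambda_{k}\}\in\ell^{1}$ and infimizing over decompositions gives $\int|fg|\,dx\leq C\|g\|_{M^{p,q}}\|f\|_{B^{p',q'}}$, i.e.\ $\|f\|_{(M^{p,q})'}\leq C\|f\|_{B^{p',q'}}$. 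The main obstacle I anticipate is the technical construction of the admissible weight $\omega_{K}$ in the first paragraph with normalization compatible with the universal constant $\mathfrak{c}$ of (\ref{universal}); everything else reduces to Lorentz-space duality and H\"older, monotone summation over blocks, and the already-proved Theorems~\ref{embed} and~\ref{char M}.
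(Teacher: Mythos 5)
Your argument is correct, and it establishes the same three statements, but your route to the first embedding (\ref{first predual N}) is genuinely different from the paper's. The paper shows that $\mathcal{L}$ restricts to a bounded functional on $L^{p',q'}(E)$ for each bounded $E$ (using the nonlinear potential of $E$ to bound $\|g\|_{N^{p',q'}}$ by $\mathcal{C}(E)^{1/q}\|g\|_{L^{p',q'}}$), applies Lorentz duality on the exhaustion $E_{N}=[-N,N]^{n}$ to find a local representative, and then extends to all of $N^{p',q'}$ via the truncations $\min(g,n)\chi_{\{|x|\leq n\}}$. You instead run Lorentz duality globally once for each admissible weight $\omega$, obtain $G_{\omega}\in L^{p,q}$, and then glue $g_{\omega}=\omega^{-1/q}G_{\omega}$ across weights; the gluing is sound because any two admissible weights are simultaneously bounded below on sets exhausting $\mathbb{R}^{n}$ up to a null set, and the resulting $g$ is locally integrable so uniqueness goes through testing against $C_{0}$. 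The nonlinear-potential weights are then used only at the end to read off $\|g\|_{M^{p,q}}\lesssim\|\mathcal{L}\|$ rather than at the outset. Both approaches are valid; the paper's localization on $E_{N}$ is arguably cleaner because it sidesteps any discussion of consistency across an uncountable family of weights, but your version is correct as organized. For the reverse embedding $M^{p,q}\hookrightarrow(N^{p',q'})^{\ast}$, you appeal to Theorem~\ref{char M} directly and then infimize over admissible $\omega$, whereas the paper factors explicitly through $N^{p',q'}\hookrightarrow B^{p',q'}$ and the block-space duality; since Theorem~\ref{char M} is itself proved from precisely those two ingredients, this is only a reorganization, not a new route. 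Your derivation of (\ref{chain}) is essentially the computation labelled (4.1) in the proof of Theorem~\ref{second predual}.

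One small point deserves to be made explicit. From the paper's cited estimates, the weight built from the nonlinear potential satisfies $\|(V^{K})^{\delta}/\mathcal{C}(K)\|_{L^{1}(\mathcal{C})}\leq C(n,\alpha,s)$, not $\leq 1$, so before $\omega_{K}$ is eligible in the infimum defining $N^{p',q'}$ you must divide by that constant; this only changes the lower bound to $\omega_{K}\geq c\,\mathcal{C}(K)^{-1}\chi_{K}$ with $c=C(n,\alpha,s)^{-1}$ (as you wrote) and propagates through your chain of inequalities without harm, but the normalization is worth stating since the admissibility class in the definition of $N^{p,q}$ is rigid.
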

The question of whether $\approx$ holds in (\ref{chain}) for $q < p$ remains open, primarily because the Fatou's property of $N^{p,q}$ has not been confirmed (see the proof of \cite[Theorem 5.3]{OP}). Note that Proposition \ref{fatou property} only establishes the weak Fatou's property of $N^{p,q}$.

The boundedness of local maximal function ${\bf M}^{\rm loc}$ on $M^{p,q}$ and the preduals $N^{p,q}$ are addressed below.
\begin{theorem}\label{maximal}
Let $1<p<\infty$ and $1<q<\infty$. Then there exist positive constants $\gamma,\overline{\gamma}$ depending only on $n$ and the parameters $\alpha,s$ in $\mathcal{C}={\rm Cap}_{\alpha,s}(\cdot)$ such that 
\begin{align}
{\bf M}^{\rm loc}&:M^{p,q}\rightarrow M^{p,q},\quad 1<q\leq p<(1+\gamma)q,\label{boundedness 1}\\
{\bf M}^{\rm loc}&:N^{p,q}\rightarrow N^{p,q},\quad 1<\frac{q+\overline{\gamma}}{1+\overline{\gamma}}<p\leq q\label{boundedness 2}
\end{align}
are bounded.
\end{theorem}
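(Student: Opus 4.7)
The plan is to recast each boundedness statement as a weighted Lorentz-space estimate for ${\bf M}^{\rm loc}$ against a power of an $A_1^{\rm loc}$ weight, and then invoke the standard weighted maximal theorem. The central ingredient is the reverse H\"older inequality for the local Muckenhoupt class, taken from \cite{RV, OK4}: there exists $\epsilon_{0} = \epsilon_{0}(n,\mathfrak{c}) > 0$ such that every $\omega \in A_{1}^{\rm loc}$ with $[\omega]_{A_{1}^{\rm loc}} \leq \mathfrak{c}$ satisfies $\omega^{1+\epsilon_{0}} \in A_{1}^{\rm loc}$ uniformly. Combined with Jensen's inequality, this yields $\omega^{s} \in A_{1}^{\rm loc}$ uniformly for all $0 \leq s \leq 1+\epsilon_{0}$. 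I would set $\gamma = \overline{\gamma} = \epsilon_{0}$.

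For (\ref{boundedness 1}) the starting point is Theorem \ref{char M}, which gives $\|f\|_{M^{p,q}}\approx \sup_{\omega}\|f\omega^{1/q}\|_{L^{p,q}}$ over admissible weights. Fix such an $\omega$ and set $v=\omega^{p/q}$. The hypothesis $p<(1+\gamma)q$ gives $p/q\leq 1+\gamma$, so $v\in A_{1}^{\rm loc}\subset A_{p}^{\rm loc}$, and the weighted Lorentz boundedness of ${\bf M}^{\rm loc}$ (see the final paragraph) yields
\[
\|({\bf M}^{\rm loc}f)\,\omega^{1/q}\|_{L^{p,q}}=\|{\bf M}^{\rm loc}f\|_{L^{p,q}(v)}\lesssim\|f\|_{L^{p,q}(v)}=\|f\omega^{1/q}\|_{L^{p,q}},
\]
with constants independent of $\omega$. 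Taking the supremum over admissible $\omega$ and reapplying Theorem \ref{char M} produces (\ref{boundedness 1}).

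For (\ref{boundedness 2}) I would work directly from the definition of $N^{p,q}$: given $\eta>0$, pick an admissible $\omega$ with $\|f\omega^{-1/q'}\|_{L^{p,q}}\leq(1+\eta)\|f\|_{N^{p,q}}$ and test ${\bf M}^{\rm loc}f$ against the \emph{same} $\omega$. A direct computation shows that $p>(q+\overline{\gamma})/(1+\overline{\gamma})$ is equivalent to $q-p<\overline{\gamma}(p-1)$, whence
\[
\frac{p'}{q'}-1=\frac{q-p}{q(p-1)}<\frac{\overline{\gamma}}{q}<\overline{\gamma},
\]
so $\omega^{p'/q'}\in A_{1}^{\rm loc}$, and hence $\omega^{-p/q'}\in A_{p}^{\rm loc}$ via the duality $v\in A_{p}^{\rm loc}\Leftrightarrow v^{-1/(p-1)}\in A_{p'}^{\rm loc}$. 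The weighted bound then gives
\[
\|({\bf M}^{\rm loc}f)\,\omega^{-1/q'}\|_{L^{p,q}}\lesssim\|f\omega^{-1/q'}\|_{L^{p,q}}\leq(1+\eta)\|f\|_{N^{p,q}},
\]
and since this same $\omega$ is admissible in the infimum defining $\|{\bf M}^{\rm loc}f\|_{N^{p,q}}$, letting $\eta\to 0$ yields (\ref{boundedness 2}).

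The main obstacle will be establishing the weighted Lorentz-space boundedness ${\bf M}^{\rm loc}:L^{p,q}(v)\to L^{p,q}(v)$ for $v\in A_{p}^{\rm loc}$. My plan is to first record the weighted weak-$(p,p)$ inequality ${\bf M}^{\rm loc}:L^{p,\infty}(v)\to L^{p,\infty}(v)$ under $v\in A_{p}^{\rm loc}$, which is the local analog of Muckenhoupt's classical theorem available from \cite{RV, OK4}, and then exploit the openness $A_{p}^{\rm loc}=\bigcup_{\epsilon>0}A_{p-\epsilon}^{\rm loc}$ (another consequence of the reverse H\"older inequality) to upgrade to the strong-type Lorentz bound by Marcinkiewicz interpolation between two weak-type endpoints straddling $p$.
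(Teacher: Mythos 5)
Your proposal hinges on the identity
\[
\|f\omega^{1/q}\|_{L^{p,q}(dx)}=\|f\|_{L^{p,q}(v\,dx)},\qquad v=\omega^{p/q},
\]
which is \emph{false} when $p\ne q$. The left side is the Lorentz $(p,q)$ quasi-norm, with respect to Lebesgue measure, of the product $f\omega^{1/q}$; it is built from the distribution function $t\mapsto|\{|f\omega^{1/q}|>t\}|$. The right side is the $\omega^{p/q}\,dx$-weighted Lorentz norm of $f$ alone, built from $t\mapsto\int_{\{|f|>t\}}\omega^{p/q}\,dx$. These distribution functions describe completely unrelated superlevel sets (take $f\equiv 1$ and any non-constant $\omega$: one depends on $t$ and one does not), so the reduction of the $M^{p,q}$ estimate to the boundedness of ${\bf M}^{\rm loc}$ on $L^{p,q}(v)$, $v\in A_p^{\rm loc}$, does not go through. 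The same false identity appears in your treatment of (\ref{boundedness 2}), where you need $\|f\omega^{-1/q'}\|_{L^{p,q}(dx)}$ to equal $\|f\|_{L^{p,q}(\omega^{-p/q'}dx)}$. The identity is only valid on the Lebesgue scale $L^p=L^{p,p}$; for the genuine Lorentz scale the substitution breaks down.

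The paper avoids this pitfall by a different strategy: instead of re-expressing the mixed Lorentz quantity as a weighted Lorentz norm, it conjugates the maximal operator by the weight, setting $T f=\bigl({\bf M}^{\rm loc}(f\omega^{-1/q})\bigr)\omega^{1/q}$ for (\ref{boundedness 1}) and $T f=\bigl[{\bf M}^{\rm loc}(f\omega^{1/q'})\bigr]\omega^{-1/q'}$ for (\ref{boundedness 2}). The product-identity step is then used only at Lebesgue exponents, where it is legitimate: $\|Tf\|_{L^q}\le C\|f\|_{L^q}$ reduces to $\omega\in A_q^{\rm loc}$, and $\|Tf\|_{L^{(1+\gamma)q}}\le C\|f\|_{L^{(1+\gamma)q}}$ reduces to $\omega^{1+\gamma}\in A_1^{\rm loc}$ via the reverse H\"older openness you correctly invoke. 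One then applies Hunt's Marcinkiewicz-type interpolation between the two \emph{unweighted} Lebesgue strong-type endpoints $L^q$ and $L^{(1+\gamma)q}$ (respectively $L^r$ and $L^q$ with $r=(q+\overline{\gamma})/(1+\overline{\gamma})$) to land in $L^{p,q}$ for $p$ strictly between them, and finally takes the supremum (resp.\ infimum) over admissible $\omega$. Your idea of using the self-improvement $\omega^{1+\epsilon_0}\in A_1^{\rm loc}$ is exactly the right ingredient, but it must be deployed to create \emph{two Lebesgue endpoints for the conjugated operator}, not to certify membership of a derived weight in $A_p^{\rm loc}$ for a weighted Lorentz bound that is not equivalent to the target estimate.
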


Now we consider the weak type Sobolev multiplier spaces. Let $\mathcal{M}^{p,\infty}$ be the subspace of $\mathcal{M}(\mathcal{R})$ such that 
\begin{align*}
\|f\|_{\mathcal{M}^{p,\infty}}=\sup_{K}\frac{\|f\chi_{K}\|_{L^{p,\infty}}}{\mathcal{C}(K)^{1/p}}
\end{align*}
is finite for any $f\in M^{p,\infty}$. We note that  
\begin{align*}
\|f\|_{\mathcal{M}^{p,\infty}}=\sup_{K}\sup_{t>0}\frac{t|\{|f|\chi_{K}>t\}|^{1/p}}{\mathcal{C}(K)^{1/p}}=\sup_{t>0}t\|\chi_{\{|f|>t\}}\|_{M_{p}^{\alpha,s}},
\end{align*}
from which $\mathcal{M}^{p,\infty}$ can be viewed as the weak Sobolev multiplier spaces with respect to $M_{p}^{\alpha,s}$. The preduals of $\mathcal{M}^{p,\infty}$ are given by block spaces $\mathcal{B}^{p',q}$, where $1<p<\infty$ and $0<q\leq 1$. While the blocks $b$ of $\mathcal{B}^{p',q}$ satisfy $\mathcal{C}(E)^{1/p'}\|b\|_{L^{p,q}}\leq 1$ with bounded support $E\subseteq\mathcal{R}$, and we endow $\mathcal{B}^{p',q}$ with the block decompositions (\ref{definition of block decomposition}) and (\ref{definition of block norm}).
\begin{theorem}\label{weak dual}
Let $1<p<\infty$ and $0<q\leq 1$. Then the following isomorphism 
\begin{align*}
(\mathcal{B}^{p',q})^{\ast}\approx\mathcal{M}^{p,\infty}
\end{align*}
holds.
\end{theorem}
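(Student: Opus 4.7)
The approach is to prove the duality $(\mathcal{B}^{p',q})^{\ast}\approx\mathcal{M}^{p,\infty}$ through the canonical pairing $\mathcal{L}_{f}(g)=\int_{\mathcal{R}}fg\,dx$, mirroring the template of Theorem \ref{second predual} but exploiting the weak-type endpoint $L^{p,\infty}$ in place of $L^{p,q}$. The restriction $0<q\leq 1$ is precisely what makes the H\"older pairing dualize correctly.

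For the inclusion $\mathcal{M}^{p,\infty}\hookrightarrow(\mathcal{B}^{p',q})^{\ast}$, I would show that for each $\mathcal{B}^{p',q}$ block $b$ supported in a bounded set $E$, three ingredients combine to give $|\mathcal{L}_{f}(b)|\leq C\|f\|_{\mathcal{M}^{p,\infty}}$: H\"older's inequality for Lorentz spaces in the form $\int|fb|\,dx\leq C\|f\chi_{E}\|_{L^{p,\infty}}\|b\|_{L^{p',1}}$, the embedding $L^{p',q}\hookrightarrow L^{p',1}$ which is available precisely because $q\leq 1$, and the block normalization together with the definition of $\|\cdot\|_{\mathcal{M}^{p,\infty}}$ to absorb the factor $\mathcal{C}(E)^{1/p}$. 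Linearity then propagates this estimate to arbitrary block decompositions $g=\sum_{k}\lambda_{k}b_{k}$ after taking infimum over $\sum_{k}|\lambda_{k}|$.

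For the reverse direction, fix $\mathcal{L}\in(\mathcal{B}^{p',q})^{\ast}$ and construct the representing function as follows. Along an exhaustion $E_{m}=\overline{B(0,m)}$, observe that for any Borel $A\subseteq E_{m}$ the indicator $\chi_{A}$ satisfies $\|\chi_{A}\|_{L^{p',q}}=c_{p',q}|A|^{1/p'}$, so after normalization $\chi_{A}$ is a $\mathcal{B}^{p',q}$ block; consequently $|\mathcal{L}(\chi_{A})|\leq C\|\mathcal{L}\|\,\mathcal{C}(E_{m})^{1/p}|A|^{1/p'}$. The set function $A\mapsto\mathcal{L}(\chi_{A})$ is therefore a finite signed measure absolutely continuous with respect to Lebesgue measure, so Radon--Nikodym produces $f_{m}\in L^{1}(E_{m})$ representing $\mathcal{L}$ on simple functions supported in $E_{m}$. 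To push $f_{m}$ into $L^{p,\infty}$, plug in $g=\mathrm{sgn}(f_{m})\chi_{\{|f_{m}|>t\}}$ and rearrange $|\mathcal{L}(g)|\leq C\|\mathcal{L}\|\mathcal{C}(E_{m})^{1/p}\|g\|_{L^{p',q}}$ into $t|\{|f_{m}|>t\}|^{1/p}\leq C\|\mathcal{L}\|\mathcal{C}(E_{m})^{1/p}$. Uniqueness of Radon--Nikodym densities forces $f_{m+1}|_{E_{m}}=f_{m}$ a.e., so the $f_{m}$ glue to a global measurable $f$; since every compact $K$ sits inside some $E_{m}$, one obtains $\|f\chi_{K}\|_{L^{p,\infty}}\leq C\|\mathcal{L}\|\mathcal{C}(K)^{1/p}$, hence $f\in\mathcal{M}^{p,\infty}$ with the correct norm bound.

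The hard part will be verifying the identity $\mathcal{L}=\mathcal{L}_{f}$ on all of $\mathcal{B}^{p',q}$, not merely on simple functions. On each $E_{m}$, equality on simple functions extends to arbitrary blocks supported in $E_{m}$ by density in $L^{p',q}(E_{m})$; but one must then take a block decomposition $g=\sum_{k}\lambda_{k}b_{k}$ (converging only pointwise a.e.\ with $\{\lambda_{k}\}\in\ell^{1}$) and interchange $\mathcal{L}$ with the infinite sum. This exchange is justified by combining the $\ell^{1}$-summability of $\{\lambda_{k}\}$ with the uniform bound $\max\{|\mathcal{L}(b_{k})|,|\mathcal{L}_{f}(b_{k})|\}\leq C\|\mathcal{L}\|$ already available from the preceding steps, via dominated convergence applied to the scalar series. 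A secondary subtlety worth flagging is that $L^{p',q}$ fails to be locally convex when $q<1$, which rules out any cavalier use of Hahn--Banach; the argument above sidesteps this by constructing the representing function through explicit Radon--Nikodym densities rather than abstract dual representations.
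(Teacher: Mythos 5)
Your argument is essentially the same as the paper's, which runs the $(B^{p',q'})^{\ast}\approx M^{p,q}$ template with $L^{p,\infty}$ in place of $L^{p,q}$. Both directions match: the forward inclusion via H\"older for weak-$L^p$ against $L^{p',1}\hookleftarrow L^{p',q}$ (using $q\le 1$), and the reverse via local representation on an exhaustion followed by the level-set/truncation argument to land in $\mathcal{M}^{p,\infty}$.

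The one point where you diverge in route, though not in substance, is the construction of the representing function. The paper simply invokes the duality $(L^{p',q}(E))^{\ast}\approx L^{p,\infty}(E)$ for $0<q\le 1$ (item (2) of Section 2, i.e., \cite[Theorem 1.4.16(v)]{GL}) to get $f_E$ directly, whereas you rebuild that duality by hand via Radon--Nikodym on $A\mapsto\mathcal{L}(\chi_A)$ and a level-set rearrangement. This is correct and in fact unfolds the cited lemma inline; it buys you self-containedness at the cost of reproving something the paper is content to cite. One sentence in your sketch is slightly misleading: you derive $t\,|\{|f_m|>t\}|^{1/p}\le C\|\mathcal{L}\|\,\mathcal{C}(E_m)^{1/p}$ and then assert the bound $\|f\chi_K\|_{L^{p,\infty}}\le C\|\mathcal{L}\|\,\mathcal{C}(K)^{1/p}$ "since every compact $K$ sits inside some $E_m$," but passing from $\mathcal{C}(E_m)^{1/p}$ to the smaller $\mathcal{C}(K)^{1/p}$ is not monotonicity; you must re-run the level-set step with the block estimate applied to $g=\mathrm{sgn}(f)\chi_{\{|f\chi_K|>t\}}$, whose support lies in $K$, so that the block normalization produces the factor $\mathcal{C}(K)^{1/p}$ directly. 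With that adjustment the argument closes. Your remark that Hahn--Banach is unavailable for $q<1$ is apt and is precisely why both you and the paper route through explicit representations rather than abstract extensions.
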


Note that when $p=q$, the term $\|f\chi_{K}\|_{L^{p,q}}^{q}$ can be viewed as a Radon-Nikodym measure that $d\mu=|f|^{p}\chi_{K}dx$. This leads to define that
\begin{align*}
\|\mu\|_{\mathfrak{M}}=\sup_{K}\frac{|\mu|(K)}{\mathcal{C}(K)},
\end{align*}
where $\mu$ is an arbitrary measure on $\mathcal{R}$ with total variation $|\mu|$. The space $\mathfrak{M}$ consists of all measures $\mu$ on $\mathcal{R}$ with finite $\|\mu\|_{\mathfrak{M}}$. Let $0<p<\infty$ and $0<q<\infty$. Denote by $L^{p,q}(\mathcal{C})$ the capacitary Lorentz space which consists of functions $f$ such that
\begin{align*}
\|f\|_{L^{p,q}(\mathcal{C})}=p^{\frac{1}{q}}\left(\int_{0}^{\infty}\mathcal{C}(\{x\in\mathcal{R}:|f(x)|>t\})^{\frac{q}{p}}t^{q}\frac{dt}{t}\right)^{\frac{1}{q}}
\end{align*}
are finite (see also \cite{OK3}). Let $\mathcal{L}^{p,q}(\mathcal{C})$ be the completion of $C_{0}$ with respect to the quasi-norm $\left\|\cdot\right\|_{L^{p,q}(\mathcal{C})}$, i.e., 
\begin{align*}
\mathcal{L}^{p,q}(\mathcal{C})=\overline{C_{0}}^{L^{p,q}(\mathcal{C})},
\end{align*}
where the closure is taken in $\left\|\cdot\right\|_{L^{p,q}(\mathcal{C})}$. Then the following theorem extends the result of \cite[Theorem 1.3]{OK}.
\begin{theorem}\label{pre theorem}
Let $0<q\leq 1$. Then the following isomorphism
\begin{align*}
\mathcal{L}^{1,q}(\mathcal{C})^{\ast}\approx\mathfrak{M}
\end{align*}
holds. Moreover, it holds that 
\begin{align}\label{1.2}
\|\mu\|_{\mathfrak{M}}=\inf\{a>0: |\mu|^{\ast}(E)\leq a\cdot\mathcal{C}(E)\},
\end{align}
where the infimum is taken over all subsets $E$ of $\mathcal{R}$.
\end{theorem}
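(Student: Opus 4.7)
The plan is to first prove identity (\ref{1.2}) and then use it, together with a localized Riesz-Markov argument, to establish the duality.

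For (\ref{1.2}), the inequality $\leq$ is immediate by specializing $E$ to compact sets. For the reverse, I would observe that $|\mu|(K)\leq\|\mu\|_{\mathfrak{M}}\mathcal{C}(K)<\infty$ on each compact $K$, so $|\mu|$ is locally finite and hence a Radon measure on $\mathcal{R}$. Inner regularity on open sets then yields $|\mu|(U)=\sup_{K\subseteq U}|\mu|(K)\leq\|\mu\|_{\mathfrak{M}}\mathcal{C}(U)$ for every open $U$, using monotonicity of $\mathcal{C}$. Combining the Borel-hull outer regularity of $|\mu|$ with the outer regularity of the Bessel capacity then promotes this bound to $|\mu|^{\ast}(E)\leq\|\mu\|_{\mathfrak{M}}\mathcal{C}(E)$ for arbitrary $E\subseteq\mathcal{R}$, which is the reverse inequality in (\ref{1.2}).

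For the easy direction of the duality, given $\mu\in\mathfrak{M}$ I set $\mathcal{L}_{\mu}(f)=\int_{\mathcal{R}}f\,d\mu$ on $C_{0}$. The layer-cake formula together with (\ref{1.2}) gives
\begin{align*}
\bigl|\mathcal{L}_{\mu}(f)\bigr|\leq\int_{0}^{\infty}|\mu|(\{|f|>t\})\,dt\leq\|\mu\|_{\mathfrak{M}}\int_{0}^{\infty}\mathcal{C}(\{|f|>t\})\,dt=\|\mu\|_{\mathfrak{M}}\|f\|_{L^{1,1}(\mathcal{C})}.
\end{align*}
Since $0<q\leq 1$, the Lorentz nesting $L^{1,q}(\mathcal{C})\hookrightarrow L^{1,1}(\mathcal{C})$, obtained by rerunning the decreasing-rearrangement proof of \cite[Proposition 1.4.10]{GL} for the capacitary distribution $t\mapsto\mathcal{C}(\{|f|>t\})$, upgrades the above to $|\mathcal{L}_{\mu}(f)|\leq C\|\mu\|_{\mathfrak{M}}\|f\|_{L^{1,q}(\mathcal{C})}$, so $\mathcal{L}_{\mu}$ extends by density to a bounded functional on $\mathcal{L}^{1,q}(\mathcal{C})$ with $\|\mathcal{L}_{\mu}\|\leq C\|\mu\|_{\mathfrak{M}}$.

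Conversely, for $\mathcal{L}\in\mathcal{L}^{1,q}(\mathcal{C})^{\ast}$ and any open $U\subseteq\mathcal{R}$, observe that any $f\in C_{0}$ with $\mathrm{supp}(f)\subseteq U$ and $\|f\|_{\infty}\leq 1$ satisfies $\|f\|_{L^{1,q}(\mathcal{C})}\leq q^{-1/q}\mathcal{C}(U)$ by a direct distribution-function computation. Hence the restriction of $\mathcal{L}$ to $C_{c}(U)$ is continuous in the uniform norm, and the Riesz-Markov-Kakutani theorem furnishes a signed Radon measure $\mu$ on $\mathcal{R}$ with $\mathcal{L}(f)=\int f\,d\mu$ for every $f\in C_{0}$ and $|\mu|(U)\leq q^{-1/q}\|\mathcal{L}\|\mathcal{C}(U)$ for every open $U$. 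Outer regularity of $\mathcal{C}$ then gives $\|\mu\|_{\mathfrak{M}}\leq q^{-1/q}\|\mathcal{L}\|$, so $\mu\in\mathfrak{M}$, and density of $C_{0}$ in $\mathcal{L}^{1,q}(\mathcal{C})$ propagates the representation $\mathcal{L}=\mathcal{L}_{\mu}$ to all of $\mathcal{L}^{1,q}(\mathcal{C})$; uniqueness of $\mu$ follows because $C_{0}$ separates Radon measures.

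The main obstacle is identity (\ref{1.2}): passing from a bound that is valid on compact sets to one valid on arbitrary subsets of $\mathcal{R}$ requires a careful interplay between the Radon character of the measures in $\mathfrak{M}$ and the outer regularity of $\mathcal{C}$, which interact somewhat delicately because $\mathcal{C}$ is only subadditive rather than $\sigma$-additive. A secondary technical point is the capacitary Lorentz nesting $L^{1,q}(\mathcal{C})\hookrightarrow L^{1,1}(\mathcal{C})$, which does not follow from direct citation since $\mathcal{C}$ is not a measure, but admits a routine adaptation of the standard decreasing-rearrangement proof.
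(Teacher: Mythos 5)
Your overall approach is a genuinely different (and cleaner) route than the paper's. The paper proves the two directions of the duality first, assuming (\ref{1.2}), and only at the end verifies (\ref{1.2}) by a somewhat roundabout argument that bootstraps finiteness of the infimum through continuity of $\mathcal{L}_{\mu}$, and that invokes the implication ``$\mathcal{C}(E)=0\Rightarrow|\mu|^{\ast}(E)=0$'' in a place where it has only been established under a different hypothesis. You instead prove (\ref{1.2}) up front by exploiting the Radon character of $|\mu|$: inner regularity on open sets, then outer regularity of both $|\mu|^{\ast}$ and $\mathcal{C}$. This breaks the apparent circularity and makes the logical flow much more transparent; it is a nice simplification.

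That said, two technical points in your write-up need to be tightened. First, the step ``$|\mu|(K)\leq\|\mu\|_{\mathfrak{M}}\mathcal{C}(K)$ on each compact $K$'' does not follow directly from the definition of $\|\mu\|_{\mathfrak{M}}$ when $\mathcal{C}(K)=0$, because the supremum defining $\|\mu\|_{\mathfrak{M}}$ is taken only over compact sets of nonzero capacity. You need an approximation: for such a $K$, outer regularity of $\mathcal{C}$ plus local compactness supplies compact $K'$ with nonempty interior, $K\subset K'^{\circ}\subseteq K'\subseteq G$, $\mathcal{C}(K')\leq\mathcal{C}(G)$ arbitrarily small, and $\mathcal{C}(K')>0$; then $|\mu|(K)\leq|\mu|(K')\leq\|\mu\|_{\mathfrak{M}}\mathcal{C}(K')\to 0$. (In the inner-regularity step one can alternatively enlarge $K$ inside the open set $U$, which is why the bound $|\mu|(U)\leq\|\mu\|_{\mathfrak{M}}\mathcal{C}(U)$ remains valid; either way, it must be said.) Second, ``density of $C_{0}$ propagates the representation $\mathcal{L}=\mathcal{L}_{\mu}$ to all of $\mathcal{L}^{1,q}(\mathcal{C})$'' is not merely an abstract-closure argument if the theorem is read, as the paper's definition of $X^{\ast}\approx Y$ demands, to require the literal integral representation $\mathcal{L}(f)=\int f\,d\mu$ for all $f\in\mathcal{L}^{1,q}(\mathcal{C})$. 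One must argue that a $\left\|\cdot\right\|_{L^{1,q}(\mathcal{C})}$-convergent sequence $\varphi_{n}\to f$ admits a subsequence that converges $\mathcal{C}$-quasi-everywhere, hence $|\mu|$-almost-everywhere by the absolute continuity just established, so that the $L^{1}(|\mu|)$-limit of $\varphi_{n}$ coincides with $f$ and the integral makes sense. This is precisely the content of Lemma \ref{quasi-normed-everywhere}, which the paper proves and uses here; your argument silently needs it as well.
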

Note that the dual space of $\mathcal{L}^{p,p}(\mathcal{C})=\mathcal{L}^{p}(\mathcal{C})$ is studied in \cite{OK} for $1<p<\infty$. The above theorem can be seen as an analogue of the classic fact that $(L^{1,q})^{\ast}\approx L^{\infty}$. Furthermore, the norm $\left\|\cdot\right\|_{\mathfrak{M}}$ is a variant of trace class inequality studied in \cite{MV}. For other aspects of Lorentz type capacities, we refer the reader to \cite{CM}, \cite{CMS}, \cite{CMS12}, and \cite{CS}.

The paper is organized as follows: Section $2$ addresses the dense subspace and normability of Lorentz spaces $L^{p,q}(\mu)$, while Section $3$ provides a brief review of Banach function space theory. The proofs of the main results are given in Section $4$. In what follows, $C(\alpha,\beta,\gamma,\ldots)$ denotes a positive constant depending only on the indicated parameters with the follow-up constants denoted by $C'(\alpha,\beta,\gamma,\ldots)$, $C''(\cdots)$, etc. We also abbreviate $\{x\in\mathcal{R}:P(x)\}$ as $\{P\}$, where $P(\cdot)$ is a statement. For instance, $\{|f|>t\}$ refers to $\{x\in\mathcal{R}:|f(x)|>t\}$. Suppose that $X$ is a locally compact Hausdorff space. We say that $\mu$ is a measure on $X$ if $\mu$ is a continuous linear functional on the space $C_{0}(X)$ of all compactly supported continuous functions on $X$. Here $C_{0}(X)$ is equipped with the inductive limit of the topologies of uniform convergence on the subspaces $C_{0}(X,K)$, where $K\subseteq X$ is compact and $C_{0}(X,K)$ consists of all continuous functions supported in $K$. The related measure theory is in accord with the N. Bourbaki approach, see \cite{BN} for further details. A more concise reference to this measure theory is given by \cite{DN}.

\section{Preliminaries on Lorentz Spaces}
We will use the following isomorphisms in the later sections, where the proofs can be found in \cite[Theorem 1.4.16]{GL}.
\begin{enumerate}
\item $L^{p,q}(\mu)^{\ast}\approx L^{\infty}(\mu)$ when $p=1$, $0<q\leq 1$.

\medskip
\item $L^{p,q}(\mu)^{\ast}\approx L^{p',\infty}(\mu)$ when $1<p<\infty$, $0<q\leq 1$.

\medskip
\item $L^{p,q}(\mu)^{\ast}\approx L^{p',q'}(\mu)$ when $1<p<\infty$, $1<q<\infty$.
\end{enumerate}
Note that the isomorphisms $\approx$ in (1) and (3) become isometric for $p=q$. The next result, the density of $C_{0}$ in $L^{p,q}(\mu)$, is surely well known to specialists, but we have not been able to find a reference for its proof. Since $L^{p,q}(\mu)$ is not normable for certain ranges of $p,q$, the density cannot be derived by \cite[Theorem 6.3.18]{PKJF}, which asserts that $C_{0}$ is dense in $X$, where $X$ is any Banach function space with absolutely continuous norm.
\begin{proposition}\label{lorentz dense}
Let $X$ be a locally compact Hausdorff space and $0<p,q<\infty$. Assume that $\mu$ is a $\sigma$-finite positive measure on $X$ that
\begin{align*}
X=\bigcup_{n=1}^{\infty}K_{n},
\end{align*}
where $K_{n}$ is compact with $\mu(K_{n})<\infty$, $n\in\mathbb{N}$. Denote by $L^{p,q}(\mu)$ the corresponding Lorentz space and $C_{0}$ the set of all compactly supported continuous functions on $X$. Then $C_{0}$ is dense in $L^{p,q}(\mu)$.
\end{proposition}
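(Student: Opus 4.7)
The plan is to approximate a general $f \in L^{p,q}(\mu)$ by $C_0$ functions in three layers: first truncate to obtain a bounded function with support of finite measure, next approximate it uniformly by simple functions, and finally invoke Lusin/Urysohn regularity to replace the characteristic-function building blocks by continuous compactly supported functions. Throughout one must keep track of the quasi-triangle constant $\kappa_0$, but since only a fixed finite number of approximations are composed this causes no serious trouble.

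For the truncation, note that finiteness of $\|f\|_{L^{p,q}(\mu)}$ together with the right-continuity of the distribution function $d_f(t) = \mu(\{|f|>t\})$ forces $d_f(t) < \infty$ for every $t > 0$. With $\{K_n\}$ the given compact exhaustion, set $E_n = \{1/n < |f| \le n\} \cap K_n$ and $f_n = f\chi_{E_n}$. Then $\{|f| > t\} \setminus E_n$ decreases to a null set while remaining of finite measure for each $t>0$, so $d_{f - f_n}(t) \downarrow 0$ pointwise. Since $d_{f - f_n}(t) \le d_f(t)$ and the integrand $d_f(t)^{q/p} t^{q-1}$ belongs to $L^1((0,\infty), dt)$, the ordinary dominated convergence theorem yields $\|f - f_n\|_{L^{p,q}(\mu)} \to 0$. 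This is the one step where the argument departs from the Banach function space proof, in which absolute continuity of the norm is invoked directly.

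Next, each $f_n$ is bounded and supported on the compact set $K_n$ of finite measure, so it is the uniform limit of simple functions $s_k$ supported in $K_n$; combining uniform convergence with $\|\chi_{K_n}\|_{L^{p,q}(\mu)} = (p/q)^{1/q}\mu(K_n)^{1/p}$ and the scaling axiom of the quasi-norm gives $\|s_k - f_n\|_{L^{p,q}(\mu)} \to 0$. It then suffices to approximate $\chi_E$ with $\mu(E) < \infty$ by $C_0$ functions. Since the Bourbaki framework endows $\mu$ with Radon regularity, for $\varepsilon > 0$ one chooses compact $K \subseteq E$ and open $U \supseteq E$ with $\mu(U \setminus K) < \varepsilon$ and, via Urysohn's lemma, produces $g \in C_0$ with $0 \le g \le 1$, $g \equiv 1$ on $K$, $g \equiv 0$ off $U$. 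Because $|\chi_E - g| \le \chi_{U \setminus K}$, the bound $\|\chi_E - g\|_{L^{p,q}(\mu)} \le (p/q)^{1/q} \varepsilon^{1/p}$ closes the argument. The only delicate point in the whole proof is the dominated convergence step for the quasi-norm in the truncation stage, which is precisely the reason one cannot simply quote \cite[Theorem 6.3.18]{PKJF} when $q > p$ and $L^{p,q}(\mu)$ fails to be a Banach function space.
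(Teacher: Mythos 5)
Your proof is correct and reaches the same conclusion, but it follows a genuinely different route from the one in the paper. The paper proceeds through the chain $\mathcal{K}$ (compactly supported functions) $\to$ $C_b$ (bounded continuous functions) $\to$ $C_0$: it truncates $|f|$ from above by $M$ to get into $\mathcal{K}$, then invokes Lusin's theorem, the gluing lemma, and the Tietze extension theorem to produce a globally continuous bounded approximant, and finally applies Urysohn's lemma to cut it down to compact support. You instead perform a single truncation $f\chi_{\{1/n<|f|\le n\}\cap K_n}$ that simultaneously enforces boundedness, boundedness away from zero, and compact support, then approximate uniformly by simple functions supported on a set of finite measure, and finally approximate each indicator $\chi_E$ directly by a $C_0$ function via inner/outer Radon regularity and Urysohn. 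This bypasses Lusin and Tietze entirely, which is cleaner and closer to the textbook $L^p$-density argument; the paper's route, by contrast, produces the intermediate density of $C_b$ as a by-product. Both proofs locate the single place where non-normability matters in exactly the same way: the truncation error estimate must be run through dominated convergence applied to the distribution-function integral, since absolute continuity of the norm is not available in the quasi-normed setting. A couple of small points to tidy up: you should say explicitly that you pass to the increasing exhaustion $K_n\leftarrow\bigcup_{i\le n}K_i$ so that the sets $E_n$ are nested (otherwise the monotone decrease of $d_{f-f_n}(t)$ is not automatic); and your parenthetical characterization that $L^{p,q}(\mu)$ ``fails to be a Banach function space when $q>p$'' is not the right criterion — the paper's observation is that normability fails for certain $p,q$ (e.g.\ $p\le 1$ or $q<1$), and the proposition is stated for all $0<p,q<\infty$, which is why the Banach function space shortcut of \cite{PKJF} is unavailable. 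Neither point affects the validity of the argument.
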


\begin{proof}
We first prove the set $\mathcal{K}$ of all compactly supported functions is dense in $L^{p,q}(\mu)$. Let $f\in L^{p,q}(\mu)$ and 
\begin{align*}
E_{n}=\bigcup_{i=1}^{n}K_{i},\quad n\in\mathbb{N}.
\end{align*}
For each $n\in\mathbb{N}$, the function $f\chi_{E_{n}}$ is compactly supported. Since $f\in L^{p,q}(\mu)$, we may assume that $f$ is real-valued. Besides, it is easy to argue that $\mu(\{|f|>t\})<\infty$ for each $0<t<\infty$. As a consequence,  $f-f\chi_{E_{n}}$ converges to $0$ pointwise as $n\rightarrow\infty$. The observation that 
\begin{align*}
\mu(\{|f-f\chi_{E_{n}}|>t\})\leq\mu(\{|f|>t\}),\quad 0<t<\infty,\quad n\in\mathbb{N}
\end{align*}
entails
\begin{align*}
\|f-f\chi_{E_{n}}\|_{L^{p,q}(\mu)}^{q}=p\int_{0}^{\infty}\mu(\{|f-f\chi_{E_{n}}|>t\})^{\frac{q}{p}}t^{q}\frac{dt}{t}\rightarrow 0,\quad n\rightarrow\infty
\end{align*}
by Lebesgue dominated convergence theorem. The density of $\mathcal{K}$ in $L^{p,q}(\mu)$ follows.

Subsequently, we prove that $C_{b}$ is dense in $\mathcal{K}$, where $C_{b}$ is the set of all bounded continuous functions on $X$. Let $f\in\mathcal{K}$ and $f_{M}=\max(\min(f,M),-M)$, $M>0$. By change of variables, we have
\begin{align*}
\|f_{M}-f\|_{L^{p,q}(\mu)}^{q}&=p\int_{0}^{\infty}\mu(\{|f_{M}-f|>t\})^{\frac{q}{p}}t^{q}\frac{dt}{t}\\
&=\frac{p}{q}\int_{0}^{\infty}\mu(\{|f_{M}-f|>t^{\frac{1}{q}}\})^{\frac{q}{p}}dt.
\end{align*}
Note that 
\begin{align*}
&\{|f_{M}-f|>t^{\frac{1}{q}}\}\\
&=\{\{f>M\}\cap\{|f_{M}-f|>t^{\frac{1}{q}}\}\}\cup\{\{f<-M\}\cap\{|f_{M}-f|>t^{\frac{1}{q}}\}\}\\
&=\{\{f>M\}\cap\{f-M>t^{\frac{1}{q}}\}\}\cup\{\{f<-M\}\cap\{-M-f>t^{\frac{1}{q}}\}\}\\
&=\{\{f>M\}\cap\{|f|>M+t^{\frac{1}{q}}\}\}\cup\{\{f<-M\}\cap\{|f|>M+t^{\frac{1}{q}}\}\}\\
&=\{\{|f|>M\}\cap\{|f|>M+t^{\frac{1}{q}}\}\}\\
&=\{|f|>M+t^{\frac{1}{q}}\}.
\end{align*}
Using the elementary inequality that
\begin{align*}
(a+b)^{r}\leq C(r)(a^{r}+b^{r}),\quad C(r)=\max\{2^{r-1},1\}
\end{align*}
with $r=1/q$, $a^{r}=M$, and $b=t$, one has 
\begin{align*}
\{|f|>M+t^{\frac{1}{q}}\}\subseteq\{|f|>C'(q)(M^{q}+t)^{\frac{1}{q}}\},
\end{align*}
and hence
\begin{align*}
\|f_{M}-f\|_{L^{p,q}(\mu)}^{q}&=\frac{p}{q}\int_{0}^{\infty}\mu(\{|f|>M+t^{\frac{1}{q}}\})^{\frac{q}{p}}dt\\
&\leq\frac{p}{q}\int_{0}^{\infty}\mu(\{|f|>C'(q)(M^{q}+t)^{\frac{1}{q}}\})^{\frac{q}{p}}dt\\
&=\frac{p}{q}\int_{M^{q}}^{\infty}\mu(\{|f|>C'(q)t^{\frac{1}{q}}\})^{\frac{q}{p}}dt\\
&=\frac{p}{qC''(q)}\int_{C''(q)M^{q}}^{\infty}\mu(\{|f|>t^{\frac{1}{q}}\})^{\frac{q}{p}}dt.
\end{align*}
Since
\begin{align*}
\|f\|_{L^{p,q}(\mu)}^{q}=p\int_{0}^{\infty}\mu(\{|f|>t\})^{\frac{q}{p}}t^{q}\frac{dt}{t}=\frac{p}{q}\int_{0}^{\infty}\mu(\{|f|>t^{\frac{1}{q}}\})^{\frac{q}{p}}dt<\infty,
\end{align*}
we obtain $\|f_{M}-f\|_{L^{p,q}(\mu)}^{q}\rightarrow 0$ as $M\rightarrow\infty$. For any $\varepsilon>0$, choose an $M>0$ such that $\|f_{M}-f\|_{L^{p,q}(\mu)}<\varepsilon$. Since $X$ is $\sigma$-finite, we can assume that the sequence $\{K_{n}\}_{n=1}^{\infty}$ is increasing. Pick an $n_{0}\in\mathbb{N}$ such that the interior $K_{n_{0}}^{\circ}$ of $K_{n_{0}}$ contains the support of $f$. Since $f_{M}$ is measurable and $K_{n_{0}}^{\circ}$ is of finite measure, by Lusin's theorem, there is a closed set $F\subseteq K_{n_{0}}^{\circ}$ such that $\mu(K_{n_{0}}^{\circ}\setminus F)<\varepsilon^{p}$ and $f_{M}|_{F}$ is a continuous function. Note that $f_{M}$ is identically zero on the closed set $\widetilde{F}=X\setminus K_{n_{0}}^{\circ}$. By gluing lemma, the function $f_{M}|_{F\cup\widetilde{F}}$ is continuous. Since $X$ is locally compact Hausdorff, we can choose by Tietze extension theorem a continuous function $v$ such that $|v|\leq M$ and $v=f_{M}$ on $F\cup\widetilde{F}$. Then
\begin{align*}
\|v-f_{M}\|_{L^{p,q}(\mu)}^{q}&=p\int_{0}^{\infty}\mu(\{|v-f_{M}|>t\})^{\frac{q}{p}}t^{q}\frac{dt}{t}\\
&=p\int_{0}^{\infty}\mu(\{x\in K_{n_{0}}^{\circ}\setminus F:|v(x)-f_{M}(x)|>t\})^{\frac{q}{p}}t^{q}\frac{dt}{t}\\
&=p\int_{0}^{2M}\mu(\{x\in K_{n_{0}}^{\circ}\setminus F:|v(x)-f_{M}(x)|>t\})^{\frac{q}{p}}t^{q}\frac{dt}{t}\\
&\leq C(p,q,M)\mu(K_{n_{0}}^{\circ}\setminus F)^{\frac{q}{p}}\\
&<C(p,q,M)\varepsilon^{q}.
\end{align*}
As a result, we have 
\begin{align*}
\|f-v\|_{L^{p,q}(\mu)}\leq\kappa_{0}(\|f-f_{M}\|_{L^{p,q}(\mu)}+\|v-f_{M}\|_{L^{p,q}(\mu)})<C(p,q,M,\kappa_{0})\varepsilon,
\end{align*}
where $\kappa_{0}=C(p,q)$ and the density of $C_{b}$ in $L^{p,q}(\mu)$ now follows.

Now we claim that $C_{0}$ is dense in $C_{b}$. To this end, let $v$ be continuous and $|v|\leq M$ for some $M>0$. For each $N\in\mathbb{N}$, let $O_{N}=\{|v|>1/N\}$. Then $O_{N}$ is open and 
\begin{align*}
\mu(O_{N})\leq C(p,q,N)\|f\|_{L^{p,q}(\mu)}<\infty.
\end{align*}
We observe that
\begin{align*}
\|v\chi_{O_{N}^{c}}\|_{L^{p,q}(\mu)^{q}}&=p\int_{0}^{\infty}\mu(\{x\in O_{N}^{c}:|v(x)|>t\})^{\frac{q}{p}}t^{q}\frac{dt}{t}\\
&=p\int_{0}^{\frac{1}{N}}\mu(\{x\in O_{N}^{c}:|v(x)|>t\})^{\frac{q}{p}}t^{q}\frac{dt}{t}\\
&\leq p\int_{0}^{\frac{1}{N}}\mu(\{|v|>t\})^{\frac{q}{p}}t^{q}\frac{dt}{t}\\
&\rightarrow 0
\end{align*}
as $N\rightarrow\infty$. Thus, for any $\varepsilon>0$, there is an open set $O\subseteq\mathcal{R}$ such that $\mu(O)<\infty$ and $\|v\chi_{O^{c}}\|_{L^{p,q}(\mu)}<\varepsilon$. Since $\mu$ is additive, we have 
\begin{align*}
\sum_{n=1}^{\infty}\mu(O\cap E_{n})=\mu(O)<\infty,
\end{align*}
where the sequence $\{E_{n}\}_{n=1}^{\infty}\subseteq X$ is chosen such that $\{E_{n}\}_{n=1}^{\infty}$ is disjoint and $O\cap E_{n}$ is precompact for each $n\in\mathbb{N}$. There is some $n_{0}\in\mathbb{N}$ such that
\begin{align*}
G=O\cap\left(\bigcup_{n=n_{0}}^{\infty}E_{n}\right)
\end{align*}
satisfies
\begin{align*}
\mu(G)\leq\sum_{n=n_{0}}^{\infty}\mu(O\cap E_{n})<\varepsilon^{q}.
\end{align*}
Denote by 
\begin{align*}
H=O\cap\left(\bigcup_{n=1}^{n_{0}}E_{n}\right).
\end{align*}
Since $X$ is locally compact Hausdorff and $H$ is precompact, we can choose by Urysohn's lemma an $\eta\in C_{0}$ such that $0\leq\eta\leq 1$ and $\eta=1$ on $H$. We have
\begin{align*}
&\|\eta v-v\|_{L^{p,q}(\mu)}\\
&\leq(\kappa_{0}^{2}+1)(\|(\eta v-v)\chi_{O^{c}}\|_{L^{p,q}(\mu)}+\|(\eta v-v)\chi_{G}\|_{L^{p,q}(\mu)}+\|(\eta v-v)\chi_{H}\|_{L^{p,q}(\mu)})\\
&=(\kappa_{0}^{2}+1)(\|(\eta v-v)\chi_{O^{c}}\|_{L^{p,q}(\mu)}+\|(\eta v-v)\chi_{G}\|_{L^{p,q}(\mu)}\|_{L^{p,q}(\mu)})
\end{align*}
as $\eta=1$ on $H$. On the other hand, one has
\begin{align*}
\|(\eta v-v)\chi_{O^{c}}\|_{L^{p,q}(\mu)}^{q}&=p\int_{0}^{\infty}\mu(\{x\in O^{c}:|\eta(x)v(x)-v(x)|>t\})^{\frac{q}{p}}t^{q}\frac{dt}{t}\\
&\leq p\int_{0}^{\infty}\mu\left(\left\{x\in O^{c}:|v(x)|>\frac{t}{2}\right\}\right)^{\frac{q}{p}}t^{q}\frac{dt}{t}\\
&\leq C(p,q)\|v\chi_{O^{c}}\|_{L^{p,q}(\mu)}\\
&<C(p,q)\varepsilon^{q}.
\end{align*}
Moreover, we have
\begin{align*}
\|(\eta v-v)\chi_{G}\|_{L^{p,q}(\mu)}^{q}&\leq p\int_{0}^{2M}\mu(\{x\in G:|\eta(x)v(x)-v(x)|>t\})^{\frac{q}{p}}t^{q}\frac{dt}{t}\\
&\leq C(p,q,M)\mu(G)\\
&<C(p,q,M)\varepsilon^{q},
\end{align*}
we conclude that $\|\eta v-v\|_{L^{p,q}(\mu)}<C'(p,q,M,\kappa_{0})\varepsilon$, which finishes the proof as $\eta v\in C_{0}$.
\end{proof}

Now we address the normability of $L^{p,q}(\mu)$. Let $1<p<\infty$, $1\leq q<\infty$, $0<r\leq 1$, and $\mu\geq 0$ be  $\sigma$-finite with $\mu(\mathcal{R})=\infty$. For any $f\in\mathcal{M}(\mathcal{R},\mu)$, define
\begin{align*}
&f_{r}^{\ast\ast}(t)=\sup_{0<\mu(E)<\infty}\left(\frac{1}{\mu(E)}\int_{E}|f|^{r}d\mu\right)^{\frac{1}{r}},\quad 0<t<\infty,\\
&\Gamma_{r}^{p,q}(f)=\left(\int_{0}^{\infty}(t^{\frac{1}{p}}f_{r}^{\ast\ast}(t))^{q}\frac{dt}{t}\right)^{\frac{1}{q}},
\end{align*}
where the supremum in $f_{r}^{\ast\ast}$ is taken over all $\mu$-measurable subsets $E$ of $\mathcal{R}$ with $0<\mu(E)<\infty$. Then it is noted in \cite[Exercise 1.4.3]{GL} that 
\begin{align*}
\|f\|_{L^{p,q}(\mu)}\leq\Gamma_{r}^{p,q}(f)\leq\left(\frac{p}{p-r}\right)^{\frac{1}{r}}\|f\|_{L^{p,q}(\mu)},
\end{align*}
and $\Gamma_{r}^{p,q}(\cdot)$ is a norm when $r=1$. In other words, the Lorentz spaces $L^{p,q}(\mu)$ are normable when $1<p<\infty$ and $1\leq q<\infty$, we will use this fact from frequently in the later sections.

We conclude this section by presenting the following simple yet useful formula.
\begin{align*}
\||f|^{r}\|_{L^{p,q}(\mu)}=\|f\|_{L^{pr,qr}(\mu)}^{r},
\end{align*}
where $0<p<\infty$, $0<q\leq\infty$, and $0<r<\infty$.

\section{Preliminaries on Banach Function Spaces}
Assume that $\mu\geq 0$ is a $\sigma$-finite measure and $\rho:\mathcal{M}(\mathcal{R},\mu)\rightarrow[0,\infty]$ is a functional. We say that $\rho$ is a Banach function norm provided that for every $f,g\in\mathcal{M}(\mathcal{R},\mu)$, $\lambda\geq 0$, and $\mu$-measurable sets $E\subseteq\mathcal{R}$, the following conditions are satisfied. 
\begin{enumerate}
\item $\rho(f)=\rho(|f|)$ and $\rho(f)=0$ if and only if $f=0$ $\mu$-almost-everywhere.

\medskip
\item $\rho(\lambda f)=\lambda\rho(f)$.

\medskip
\item $\rho(f+g)\leq\rho(f)+\rho(g)$.

\medskip
\item If $0\leq g\leq f$ $\mu$-almost-everywhere, then $\rho(g)\leq\rho(f)$.

\medskip
\item If $0\leq f_{1}\leq f_{2}\leq\cdots$, $f_{n}\uparrow f$ $\mu$-almost-everywhere, then $\rho(f_{n})\uparrow\rho(f)$. This is also known to be the Fatou's property.

\medskip
\item If $E$ is bounded, then $\rho(\chi_{E})<\infty$.

\medskip
\item If $E$ is bounded, then there exists a positive constant $C(E)$ such that 
\begin{align*}
\int_{E}|f|d\mu\leq C(E)\rho(f)
\end{align*}
holds for all $f\in\mathcal{M}(\mathcal{R},\mu)$.
\end{enumerate}
Then we denote by $X=X(\rho)$ the set of all functions $f\in\mathcal{M}(\mathcal{R},\mu)$ such that $\rho(f)<\infty$ and call $X$ the Banach function space. For any $f\in\mathcal{M}(\mathcal{R},\mu)$, let
\begin{align*}
\|f\|_{X}=\rho(f).
\end{align*}
Then $\left\|\cdot\right\|_{X}$ is a norm on $X$. It can be shown that $X$ is a Banach space (see \cite[Corollary 6.1.15]{PKJF}). Moreover, the second K\"othe dual $X''=(X')'$ of $X$ satisfies that
\begin{align*}
X''=X
\end{align*}
(see \cite[Theorem 6.2.9]{PKJF}). The crucial properties used in the proof of the above isometric isomorphism are the triangle inequality $\|f+g\|_{X}\leq\|f\|_{X}+\|g\|_{X}$ and the Fatou's property.

Moreover, given a Banach function space $X$, we say that $X$ has an absolutely continuous norm provided that for every sequence $\{f_{n}\}_{n=1}^{\infty}\subseteq\mathcal{M}(\mathcal{R},\mu)$, if $f_{n}\downarrow 0$ $\mu$-almost-everywhere, then $\|f_{n}\|_{X}\downarrow 0$. The following proposition is tacitly used in the proof of \cite[Proposition 2.8]{OP}, we include its proof for the sake of completeness.
\begin{proposition}\label{use convex}
Let $1\leq q<\infty$ and $X$ be a Banach function space. If $X$ is $q$-concave in the sense that
\begin{align*}
\left(\sum_{i=1}^{m}\|f_{i}\|_{X}^{q}\right)^{\frac{1}{q}}\leq\mathfrak{C}\left\|\left(\sum_{i=1}^{m}|f_{i}|^{q}\right)^{\frac{1}{q}}\right\|_{X}
\end{align*}
holds for some positive constant $\mathfrak{C}$ and $\{f_{i}\}_{i=1}^{m}$ is any finite sequence of functions in $X$, then $X$ has an absolutely continuous norm. In which case, it holds that
\begin{align*}
X'=X^{\ast}.
\end{align*}
\end{proposition}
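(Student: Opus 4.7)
The plan is in two stages: first, prove that $X$ has absolutely continuous norm by a contradiction argument that translates $q$-concavity into a counting estimate; second, invoke the classical duality theorem for Banach function spaces with absolutely continuous norm to conclude $X^{\ast}=X'$.

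For the first stage, assume $\{f_{n}\}\subseteq X$ with $f_{n}\downarrow 0$ $\mu$-a.e. By the lattice axiom (4), $\|f_{n}\|_{X}$ is a decreasing sequence; let $L=\lim_{n}\|f_{n}\|_{X}$. Suppose for contradiction $L>0$. For each fixed $n$, the functions $f_{n}-f_{m}$ increase $\mu$-a.e.\ to $f_{n}$ as $m\to\infty$ (using $f_{m}\downarrow 0$), so the Fatou property (5) gives $\|f_{n}-f_{m}\|_{X}\uparrow\|f_{n}\|_{X}\geq L$. Using this I would extract recursively a subsequence $\{n_{k}\}$ such that the nonnegative increments $H_{k}:=f_{n_{k}}-f_{n_{k+1}}$ satisfy $\|H_{k}\|_{X}\geq L/2$ for every $k$. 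The telescoping identity $\sum_{k=1}^{m}H_{k}=f_{n_{1}}-f_{n_{m+1}}\leq f_{n_{1}}$ combined with the elementary inequality $(\sum a_{k}^{q})^{1/q}\leq\sum a_{k}$ (valid for nonnegative scalars and $q\geq 1$) yields the pointwise bound $\bigl(\sum_{k=1}^{m}H_{k}^{q}\bigr)^{1/q}\leq f_{n_{1}}$. Applying axiom (4) followed by the $q$-concavity hypothesis,
\begin{align*}
m^{1/q}\cdot\frac{L}{2}\leq\left(\sum_{k=1}^{m}\|H_{k}\|_{X}^{q}\right)^{1/q}\leq\mathfrak{C}\left\|\left(\sum_{k=1}^{m}H_{k}^{q}\right)^{1/q}\right\|_{X}\leq\mathfrak{C}\|f_{n_{1}}\|_{X}\leq\mathfrak{C}\|f_{1}\|_{X},
\end{align*}
which fails for $m$ sufficiently large. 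Hence $L=0$.

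For the second stage, once $X$ has absolutely continuous norm, the identity $X^{\ast}=X'$ is a standard result of Banach function space theory; compare the discussion in \cite{PKJF}. The easy inclusion $X'\hookrightarrow X^{\ast}$ is given by $g\mapsto(f\mapsto\int fg\,d\mu)$, with operator norm at most $\|g\|_{X'}$. For the reverse direction, any $\mathcal{L}\in X^{\ast}$ restricted to bounded measurable sets via $E\mapsto\mathcal{L}(\chi_{E})$ defines a $\sigma$-additive set function (countable additivity follows from absolute continuity of the norm applied to $\chi_{\bigcup_{k\geq n}E_{k}}\downarrow 0$ for disjoint $\{E_{k}\}$ of finite measure), so Radon--Nikodym produces a density $g$. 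Absolute continuity also permits approximation of arbitrary $f\in X$ by simple functions in norm, extending the representation $\mathcal{L}(f)=\int fg\,d\mu$ to all of $X$; boundedness of $\mathcal{L}$ then forces $g\in X'$ with $\|g\|_{X'}=\|\mathcal{L}\|$.

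The main obstacle is the subsequence extraction in the first stage: to produce $\{n_{k}\}$ with $\|H_{k}\|_{X}\geq L/2$, one must exploit the Fatou property to upgrade the bare convergence $\|f_{n}\|_{X}\to L>0$ into a uniform lower bound on successive increments. With that step secured, the $q$-concavity collapses the argument to the counting inequality $m^{1/q}(L/2)\leq\mathfrak{C}\|f_{1}\|_{X}$, and the remainder is routine.
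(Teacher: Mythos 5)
Your proof is correct and follows essentially the same route as the paper's: argue by contradiction, extract a subsequence of $\{f_n\}$ whose consecutive differences are separated in norm, apply $q$-concavity to the telescoping sum to force $m^{1/q}$ growth against a bounded right-hand side via the Fatou property. The only cosmetic difference is in the subsequence extraction: the paper deduces it from the completeness of $X$ and the claim that a Cauchy limit must be $0$ (so the sequence is not Cauchy, giving $\varepsilon$-separated increments), whereas you get it directly from the Fatou property applied to $\|f_n-f_m\|_X\uparrow\|f_n\|_X\geq L$, which is slightly more self-contained but mathematically equivalent. The second stage (invoking the standard duality $X^{\ast}=X'$ for Banach function spaces with absolutely continuous norm) is likewise what the paper does, tacitly via \cite{PKJF}.
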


\begin{proof}
Let $\{f_{n}\}_{n=1}^{\infty}$ be a sequence in $X$ with $f_{n}\downarrow 0$ $\mu$-almost-everywhere. We first claim that if $f_{n}\rightarrow\varphi$ in $X$, then $\varphi=0$ $\mu$-almost everywhere. To see this, we use the Fatou's property of $X$ to obtain
\begin{align*}
\|\varphi\|_{X}=\left\|\lim_{n\rightarrow\infty}f_{n}(\cdot)-\varphi\right\|_{X}\leq\liminf_{n\rightarrow\infty}\|f_{n}-\varphi\|_{X}=0,
\end{align*}
which yields the claim.

Suppose to the contrary that $\|f_{n}\|_{X}\rightarrow 0$ fails. As $X$ is a Banach space, the previous argument shows that $\{f_{n}\}_{n=1}^{\infty}$ is not Cauchy. Then there are $\varepsilon>0$ and $n_{1}<n_{2}<\cdots$ such that 
\begin{align*}
\|f_{n_{k+1}}-f_{n_{k}}\|_{X}\geq\varepsilon,\quad k=1,2,\ldots.
\end{align*}
Using the $q$-concavity of $X$, one has for fixed $m\in\mathbb{N}$ that
\begin{align*}
\left(\sum_{k=1}^{m}\|f_{n_{k+1}}-f_{n_{k}}\|_{X}^{q}\right)^{\frac{1}{q}}&\leq\mathfrak{C}\left\|\left(\sum_{k=1}^{m}|f_{n_{k+1}}-f_{n_{k}}|^{q}\right)^{\frac{1}{q}}\right\|_{X}\\
&\leq\mathfrak{C}\left\|\sum_{k=1}^{m}|f_{n_{k+1}}-f_{n_{k}}|\right\|_{X}\\
&=\mathfrak{C}\|f_{n_{1}}-f_{n_{m+1}}\|_{X},
\end{align*}
and hence
\begin{align*}
\mathfrak{C}\|f_{n_{1}}-f_{n_{m+1}}\|_{X}\geq m^{\frac{1}{q}}\varepsilon,\quad m=1,2,\ldots.
\end{align*}
Note that $f_{n_{1}}-f_{n_{m}}\uparrow f_{n_{1}}$ $\mu$-almost-everywhere. Taking limit $m\rightarrow\infty$ to the above inequality and using Fatou's property of $X$ again, we obtain a contradiction.
\end{proof}

We end this section by noting that if $\left\|\cdot\right\|_{X}$ is only a quasi-norm with weak Fatou's property, the second K\"othe dual $X''$ of $X$ does not necessarily isomorphic to $X$. This is the main difficulty in proving the isomorphisms in Theorem \ref{dual N}. It is worth noting that $\left\|\cdot\right\|_{L^{p,q}(\mu)}$ is a quasi-norm which fails to satisfy the triangle inequality. Hence if we equip $L^{p,q}(\mu)$ with $\left\|\cdot\right\|_{L^{p,q}(\mu)}$, then $L^{p,q}(\mu)$ is surely not a Banach function space under this quasi-norm. Nevertheless, the Fatou's property clearly holds for $\left\|\cdot\right\|_{L^{p,q}(\mu)}$ and we are able to recover $L^{p,q}(\mu)$ to be a Banach function space by switching to another norm. Let $1<p<\infty$ and $1\leq q<\infty$. Consider the norm $\Gamma_{1}^{p,q}(\cdot)$ that introduced in the previous section. Note that $\Gamma_{1}^{p,q}(\cdot)$ satisfies the Fatou's property. It is now clear that $L^{p,q}(\mu)$, equipped with $\Gamma_{1}^{p,q}(\cdot)$, is a Banach function space. For such $p,q$, it is easy to see that 
\begin{align*}
\|f_{1}+f_{2}+\cdots\|_{L^{p,q}(\mu)}\leq C(p,q)(\|f_{1}\|_{L^{p,q}(\mu)}+\|f_{2}\|_{L^{p,q}(\mu)}+\cdots),
\end{align*}
where $\{f_{n}\}_{n=1}^{\infty}\subseteq\mathcal{M}(\mathcal{R},\mu)$ and $f_{1}+f_{2}+\cdots$ is the usual pointwise sum.

\section{Proofs of Main Results}
We first establish that for the Bessel capacities $\mathcal{C}={\rm Cap}_{\alpha,s}(\cdot)$ satisfy the following estimates for arbitrary measurable sets $E\subseteq\mathcal{R}$.
\begin{align}\label{sobolev 1}
|E|^{\varepsilon}\leq C(n,\alpha,s,\varepsilon)\mathcal{C}(E),\quad 0<\varepsilon\leq 1,\quad \alpha s=n
\end{align}
and
\begin{align}\label{sobolev 2}
|E|^{\varepsilon}\leq C(n,\alpha,s,\varepsilon)\mathcal{C}(E),\quad\frac{n-\alpha s}{n}\leq\varepsilon\leq 1,\quad \alpha s<n.
\end{align}
Assume that $\alpha s=n$. Using Young's convolution inequality, we have 
\begin{align*}
\|G_{\alpha}\ast f\|_{L^{r}}\leq\|G_{\alpha}\|_{L^{\overline{r}}}\|f\|_{L^{s}},
\end{align*}
where
\begin{align*}
\frac{1}{r}+1=\frac{1}{\overline{r}}+\frac{1}{s},\quad 1<s=\frac{n}{\alpha}\leq r<\infty.
\end{align*}
Then $0<\overline{r}=nr/(n+r(n-\alpha))\leq 1$ and hence $\|G_{\alpha}\|_{L^{\overline{r}}}<\infty$. Suppose that $G_{\alpha}\ast f\geq 1$ on $E$. Then 
\begin{align*}
|E|^{s/r}\leq C(n,\alpha,s,r)\|f\|_{L^{s}}^{s},
\end{align*}
which gives (\ref{sobolev 1}) by simple algebraic manipulations.

Now assume that $\alpha s<n$. By Sobolev embedding theorem, we have 
\begin{align*}
\|G_{\alpha}\ast f\|_{L^{r}}\leq C(n,\alpha,s,r)\|f\|_{L^{s}},
\end{align*}
where
\begin{align*}
s\leq r\leq s^{\ast},\quad\frac{1}{s^{\ast}}=\frac{1}{s}-\frac{\alpha}{n}=\frac{n-\alpha s}{ns}.
\end{align*}
Suppose that $G_{\alpha}\ast f\geq 1$ on $E$. Then
\begin{align*}
|E|^{\frac{s}{r}}\leq C(n,\alpha,s,r)\|f\|_{L^{s}}^{s},\quad\frac{n-\alpha s}{n}\leq\frac{s}{r}\leq 1,
\end{align*}
which yields (\ref{sobolev 2}). A direct consequence of (\ref{sobolev 1}) and (\ref{sobolev 2}) is the absolute continuity for $\alpha s\leq n$, i.e., if $\mathcal{C}(E)=0$, then $|E|=0$. In view of this, the capacities $\mathcal{C}$ are finer than the Lebesgue measure.

We will use some classical results from nonlinear potential theory. Let $E\subseteq\mathcal{R}$ be an arbitrary set with $0<\mathcal{C}(E)<\infty$. Then there exists a positive measure $\mu=\mu^{E}$ on $\mathcal{R}$ such that $\mu$ is supported in $\overline{E}$ with 
\begin{align*}
&V^{E}(x)\geq 1\quad\text{for all }x\in E\setminus N,\\
&V^{E}(x)\leq 1\quad\text{for all }x\in{\rm supp}(\mu),\\
&\mu(\mathcal{R})=\int_{\mathcal{R}}(G_{\alpha}\ast\mu)(x)^{s'}dx=\int_{\mathcal{R}}V^{E}(x)d\mu(x)=\mathcal{C}(E),
\end{align*}
where $N\subseteq\mathcal{R}$ is some set with $\mathcal{C}(N)=0$ and $V^{E}=G_{\alpha}\ast(G_{\alpha}\ast\mu)^{s'-1}$ (see \cite[Theorem 2.5.6]{AH}). We call $V^{E}$ the nonlinear potential associated with $E$. Further, it is proved in \cite[Lemma 3.2]{OP} that there exists a positive constant $\delta=C(n,\alpha,s)$ such that $(V^{E})^{\delta}\in L^{1}(\mathcal{C})$ with 
\begin{align*}
C(n,\alpha,s,\delta)^{-1}\mathcal{C}(E)\leq\|(V^{E})^{\delta}\|_{L^{1}(\mathcal{C})}\leq C(n,\alpha,s,\delta)\mathcal{C}(E).
\end{align*}
Moreover, \cite[Theorem 3.1]{OP} gives $(V^{E})^{\delta}\in A_{1}^{\rm loc}$ with 
\begin{align}\label{universal}
[(V^{E})^{\delta}]_{A_{1}^{\rm loc}}\leq\mathfrak{c},
\end{align}
where $\mathfrak{c}=C(n,\alpha,s)$ is a positive constant. We will reserve this symbol $\mathfrak{c}$ throughout the paper. Finally, we note that $L^{1}(\mathcal{C})$ is normable (see \cite[Theorem 2.8]{OK2}).

\begin{proof}[Proof of Theorem \ref{first predual}]
Let $1<r<p$ and $1<r\leq q$. We first prove that $M^{p,q}$ is $r$-convex, i.e.,
\begin{align*}
\left\|\left(\sum_{i=1}^{m}|f_{i}|^{r}\right)^{\frac{1}{r}}\right\|_{M^{p,q}}\leq\kappa\left(\sum_{i=1}^{m}\|f_{i}\|_{M^{p,q}}^{r}\right)^{\frac{1}{r}}
\end{align*}
holds for every finite sequence $\{f_{i}\}_{i=1}^{m}$ of functions in $M^{p,q}$, where $\kappa=C(p,q,r)$. For such a sequence $\{f_{i}\}_{i=1}^{m}$, we have 
\begin{align*}
\left\|\left(\sum_{i=1}^{m}|f_{i}|^{r}\right)^{\frac{1}{r}}\right\|_{M^{p,q}}&=\sup_{K}\frac{1}{\mathcal{C}(K)^{1/q}}\left\|\left(\sum_{i=1}^{m}|f_{i}|^{r}\right)^{\frac{1}{r}}\chi_{K}\right\|_{L^{p,q}}\\
&=\sup_{K}\frac{1}{\mathcal{C}(K)^{1/q}}\left\|\sum_{i=1}^{m}|f_{i}|^{r}\chi_{K}\right\|_{L^{p/r,q/r}}^{\frac{1}{r}}\\
&\leq\kappa\cdot\sup_{K}\frac{1}{\mathcal{C}(K)^{1/q}}\left(\sum_{i=1}^{m}\left\||f_{i}|^{r}\chi_{K}\right\|_{L^{p/r,q/r}}\right)^{\frac{1}{r}}\\
&=\kappa\cdot\sup_{K}\frac{1}{\mathcal{C}(K)^{1/q}}\left(\sum_{i=1}^{m}\left\|f_{i}\chi_{K}\right\|_{L^{p,q}}^{r}\right)^{\frac{1}{r}}\\
&\leq\kappa\left(\sum_{i=1}^{m}\|f_{i}\|_{M^{p,q}}^{r}\right)^{\frac{1}{r}},
\end{align*}
which justifies the $r$-convexity of $M^{p,q}$.

Now we show that $(M^{p,q})'$ is $r'$-concave. Fix a finite sequence $\{g_{i}\}_{i=1}^{m}$ in $(M^{p,q})'$. Using $\ell^{r'}((M^{p,q})^{\ast})\approx[\ell^{r}(M^{p,q})]^{\ast}$ (see \cite[Theorem 18.42]{DN}), we infer that 
\begin{align*}
&\left(\sum_{i=1}^{m}\|g_{i}\|_{(M^{p,q})'}^{r'}\right)^{\frac{1}{r'}}\\
&=\left(\sum_{i=1}^{m}\|\mathcal{L}_{g_{i}}\|_{(M^{p,q})^{\ast}}^{r'}\right)^{\frac{1}{r'}}\\
&\leq\mathfrak{C}\cdot\sup_{\|\{f_{i}\}_{i=1}^{m}\|_{\ell^{r}(M^{p,q})}\leq 1}\left|\int_{\mathbb{R}^{n}}\sum_{i=1}^{m}f_{i}(x)g_{i}(x)dx\right|\\
&\leq\mathfrak{C}\cdot\sup_{\|\{f_{i}\}_{i=1}^{m}\|_{\ell^{r}(M^{p,q})}\leq 1}\int_{\mathbb{R}^{n}}\left(\sum_{i=1}^{m}|f_{i}(x)|^{r}\right)^{\frac{1}{r}}\left(\sum_{i=1}^{m}|g_{i}(x)|^{r'}\right)^{\frac{1}{r'}}dx\\
&\leq\mathfrak{C}\cdot\sup_{\|\{f_{i}\}_{i=1}^{m}\|_{\ell^{r}(M^{p,q})}\leq 1}\left\|\left(\sum_{i=1}^{m}|f_{i}|^{r}\right)^{\frac{1}{r}}\right\|_{M^{p,q}}\left\|\left(\sum_{i=1}^{m}|g_{i}|^{r'}\right)^{\frac{1}{r'}}\right\|_{(M^{p,q})'}\\
&\leq\mathfrak{C}\kappa\left\|\left(\sum_{i=1}^{m}|g_{i}|^{r'}\right)^{\frac{1}{r'}}\right\|_{(M^{p,q})'},
\end{align*}
where the positive constant $\mathfrak{C}$ is due to the isomorphism $\ell^{r'}((M^{p,q})^{\ast})\approx[\ell^{r}(M^{p,q})]^{\ast}$, and it depends on $n,\alpha,s,p,q$. Thus the $r'$-concavity of $(M^{p,q})'$ follows.

The constants $\mathfrak{C},k$ can be reduced to $1$ if $p=q$ or $M^{p,q}$ is replaced by the normed space $\mathfrak{M}^{p,q}$, where 
\begin{align*}
\|f\|_{\mathfrak{M}^{p,q}}=\sup_{K}\frac{\Gamma_{1}^{p,q}(f\chi_{K})}{\mathcal{C}(K)^{1/q}},\quad f\in\mathcal{M}(\mathcal{R}).
\end{align*}
Since $\mathfrak{M}^{p,q}$ is a Banach function space, Proposition \ref{use convex} entails
\begin{align*}
[(\mathfrak{M}^{p,q})']^{\ast}=(\mathfrak{M}^{p,q})''=\mathfrak{M}^{p,q}.
\end{align*}
Switching $\mathfrak{M}^{p,q}$ back to $M^{p,q}$, we obtain the result of this theorem for $M^{p,q}$. The proof of $\mathcal{M}^{p,q}$ is argued exactly in the same fashion. 
\end{proof}

Now we show for the embedding that
\begin{align*}
M^{p,r}\hookrightarrow M^{p,q}
\end{align*}
for admissible exponents $p,q,r$. To this end, we need the following technical lemma, which is due to Strichartz \cite{ST} (see also \cite[Theorem 3.1.2]{MS}).
\begin{lemma}\label{IVstr}
Let $\{\mathcal{B}^{(j)}\}_{j\geq 0}$ be a covering of $\mathbb{R}^{n}$ by balls with unit diameter. Assume that this covering has finite multiplicity that depends only on $n$. Further, let $O^{(j)}$ be the center of $\mathcal{B}^{(j)}$, $O^{(0)}=0$, and $\eta_{j}=\eta(x-O^{(j)})$, where $\eta$ is infinitely differentiable function that supported in $B_{1}(0)$ and $\eta=1$ on $\mathcal{B}^{(0)}$. Then
\begin{align*}
\|u\|_{W^{\alpha,s}}\approx\left(\sum_{j\geq 0}\|u\eta_{j}\|_{W^{\alpha,s}}^{s}\right)^{1/s}.
\end{align*}
Here $u\in W^{\alpha,s}$ if and only if $u=G_{\alpha}\ast f$ for some $f\in L^{s}$ and $\|u\|_{W^{\alpha,s}}=\|f\|_{L^{s}}$.
\end{lemma}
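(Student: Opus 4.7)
The plan is to establish the two-sided estimate by proving each inequality separately using a partition-of-unity argument. Let $\Phi(x)=\sum_{j\geq 0}\eta_j(x)$. Because $\{\mathcal{B}^{(j)}\}$ covers $\mathbb{R}^n$ and $\eta_j\equiv 1$ on $\mathcal{B}^{(j)}$, we have $\Phi\geq 1$ pointwise, and because the cover has bounded multiplicity $M=M(n)$, we have $\Phi\leq M$; moreover all derivatives of $\Phi$ are uniformly bounded since at each point only $\leq M$ of the $\eta_j$ are nonzero. Hence $\phi_j:=\eta_j/\Phi$ defines a smooth partition of unity subordinate to $\{B_1(O^{(j)})\}$ whose derivatives are bounded uniformly in $j$. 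The key analytic inputs I would establish are (a) a \emph{smooth multiplier estimate} $\|w\psi\|_{W^{\alpha,s}}\leq C\|w\|_{W^{\alpha,s}}$ valid for $\psi\in C_c^\infty(B_1(0))$ with $C$ depending only on a finite number of derivatives of $\psi$, and (b) a \emph{superposition estimate} $\|\sum_j v_j\|_{W^{\alpha,s}}^s\leq C\sum_j \|v_j\|_{W^{\alpha,s}}^s$ when the supports of the $v_j$ have bounded overlap.

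For the direction $\|u\|_{W^{\alpha,s}}\leq C\bigl(\sum_j\|u\eta_j\|_{W^{\alpha,s}}^s\bigr)^{1/s}$, I would write $u=\sum_j u\phi_j$ and observe that $u\phi_j=(u\eta_j)\psi_j$, where $\psi_j:=(1/\Phi)\cdot\mathbf{1}_{\mathrm{supp}\,\eta_j}$ extends to a smooth compactly supported function with uniform derivative bounds. Applying (a) gives $\|u\phi_j\|_{W^{\alpha,s}}\lesssim\|u\eta_j\|_{W^{\alpha,s}}$, and then applying (b) to $\{u\phi_j\}$ (whose supports inherit the bounded multiplicity of $\{B_1(O^{(j)})\}$) yields the claim.

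For the reverse direction, I would first dispose of integer $\alpha$: expanding $\partial^\beta(u\eta_j)$ by the Leibniz rule and using the uniform bounds on $\partial^\gamma\eta_j$, one obtains
$$\|u\eta_j\|_{W^{\alpha,s}}^s\;\lesssim\;\sum_{|\beta|\leq\alpha}\int_{B_1(O^{(j)})}|\partial^\beta u(x)|^s\,dx,$$
and summing in $j$ with the bounded-multiplicity property turns the right-hand side into $\|u\|_{W^{\alpha,s}}^s$ up to a constant depending only on $n,\alpha,s$. For non-integer $\alpha$, the Leibniz rule is unavailable; the cleanest route is to pass to the Littlewood--Paley / Triebel--Lizorkin characterization $W^{\alpha,s}\cong F^{\alpha}_{s,2}$ (valid for $1<s<\infty$), in which localization by smooth cutoffs is a standard operation, or to work directly with the Gagliardo seminorm $\iint|u(x)-u(y)|^s|x-y|^{-n-\alpha s}\,dx\,dy$ for the fractional part.

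The main obstacle is precisely the fractional case: both the multiplier estimate (a) and the superposition estimate (b) are nontrivial because $(I-\Delta)^{\alpha/2}$ is nonlocal, so commutators of the form $[\psi_j,(I-\Delta)^{\alpha/2}]$ must be controlled, which calls for Coifman--Meyer or paraproduct techniques. Given that this lemma is classical and the paper cites Strichartz~\cite{ST} as well as \cite[Theorem 3.1.2]{MS}, I expect that the proof offered here will in fact just invoke those references rather than reproduce the underlying harmonic analysis.
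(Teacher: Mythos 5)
You correctly anticipated the situation: the paper does not prove Lemma~\ref{IVstr}. It simply states the lemma and cites Strichartz~\cite{ST} and Maz'ya--Shaposhnikova~\cite[Theorem 3.1.2]{MS} for the proof. Your sketch is a faithful outline of the classical partition-of-unity argument used in those references: establish that smooth, compactly supported functions with uniformly bounded derivatives are pointwise multipliers of $W^{\alpha,s}$ with norm bounds depending only on finitely many derivative sup-norms (your step~(a)); establish a superposition estimate $\bigl\|\sum_j v_j\bigr\|^s_{W^{\alpha,s}}\lesssim\sum_j\|v_j\|^s_{W^{\alpha,s}}$ under bounded overlap of supports (your step~(b)); and combine by writing $u=\sum_j u\phi_j$ with $\phi_j=\eta_j/\Phi$. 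Two small remarks, neither of which represents a substantive gap. First, $\psi_j=(1/\Phi)\cdot\mathbf{1}_{\mathrm{supp}\,\eta_j}$ is of course not smooth as written; one should take a smooth bump $\zeta_j$ equal to $1$ on $\mathrm{supp}\,\eta_j$ and compactly supported in a slightly larger ball, and set $\psi_j=\zeta_j/\Phi$ — the uniform derivative bounds on $1/\Phi$ do follow from $1\leq\Phi\leq M(n)$ together with the bounded-overlap control on $\partial^\gamma\Phi$, exactly as you argue. Second, the genuinely technical work for non-integer $\alpha$ — working in the Triebel--Lizorkin scale $F^{\alpha}_{s,2}$ for $1<s<\infty$, or controlling the commutator of a smooth cutoff with $(I-\Delta)^{\alpha/2}$ — is precisely the harmonic analysis that the paper delegates to~\cite{ST} and~\cite{MS}, so it is reasonable that you did not carry it out in detail either.
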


Note that the function $f\in L^{s}$ in $u=G_{\alpha}\ast f\in W^{\alpha,s}$ is uniquely determined; see \cite[Section 3.3]{SE} for a proof. The Strichartz lemma entails the localization of capacity, which reads as follows; see also \cite[Theorem 4]{AD} for a variant.
\begin{proposition}\label{IVSTR}
For any set $E\subseteq\mathcal{R}$, it holds that 
\begin{align}\label{IVSTRuse}
\mathcal{C}(E)\approx\sum_{j\geq 0}\mathcal{C}(E\cap\mathcal{B}^{(j)}),
\end{align}
where $\{\mathcal{B}_{j}\}_{j\geq 0}$ is as in Lemma \ref{IVstr}.
\end{proposition}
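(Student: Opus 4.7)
The plan is to prove both directions of the equivalence $\mathcal{C}(E) \approx \sum_{j\geq 0}\mathcal{C}(E\cap\mathcal{B}^{(j)})$ by transferring to the potential side via Lemma \ref{IVstr}, with the finite multiplicity of $\{\mathcal{B}^{(j)}\}$ providing the combinatorial bookkeeping needed to match $\ell^{s}$ sums on both sides. Throughout, the mild annoyance that densities obtained by inverting $G_{\alpha}$ need not be nonnegative will be handled uniformly: whenever such a density $h$ arises, we replace it by $|h|$, and since $G_{\alpha}\geq 0$, admissibility is preserved via $G_{\alpha}\ast|h|\geq|G_{\alpha}\ast h|$.

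For the inequality $\sum_{j}\mathcal{C}(E\cap\mathcal{B}^{(j)})\lesssim\mathcal{C}(E)$, take any admissible $f\geq 0$ for $\mathcal{C}(E)$ and set $u=G_{\alpha}\ast f$. Since each $\eta_{j}$ is smooth with compact support, $u\eta_{j}\in W^{\alpha,s}$, so $u\eta_{j}=G_{\alpha}\ast f_{j}$ for a unique $f_{j}\in L^{s}$ with $\|f_{j}\|_{L^{s}}=\|u\eta_{j}\|_{W^{\alpha,s}}$; Lemma \ref{IVstr} then yields $\sum_{j}\|f_{j}\|_{L^{s}}^{s}\lesssim\|f\|_{L^{s}}^{s}$. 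As $\eta_{j}\equiv 1$ on $\mathcal{B}^{(j)}$ and $u\geq 1$ on $E$, we have $G_{\alpha}\ast f_{j}\geq 1$ on $E\cap\mathcal{B}^{(j)}$, so $|f_{j}|$ is admissible for $\mathcal{C}(E\cap\mathcal{B}^{(j)})$, giving $\mathcal{C}(E\cap\mathcal{B}^{(j)})\leq\|f_{j}\|_{L^{s}}^{s}$. Summing and taking the infimum over $f$ delivers the stated bound.

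For the reverse direction $\mathcal{C}(E)\lesssim\sum_{j}\mathcal{C}(E\cap\mathcal{B}^{(j)})$, pick near-minimizers $g_{j}\geq 0$ with $u_{j}:=G_{\alpha}\ast g_{j}\geq 1$ on $E\cap\mathcal{B}^{(j)}$ and $\|g_{j}\|_{L^{s}}^{s}$ arbitrarily close to $\mathcal{C}(E\cap\mathcal{B}^{(j)})$. Form $v=\sum_{j}u_{j}\eta_{j}$, which is a locally finite nonnegative function with $v\geq 1$ on $E$. Applying Lemma \ref{IVstr} to $v$, for each $k$ the product $\eta_{j}\eta_{k}$ vanishes for all but boundedly many $j$ (by finite multiplicity of the covering enlarged to the supports of $\eta_{j}$), and each multiplication $w\mapsto w\eta_{j}\eta_{k}$ is bounded on $W^{\alpha,s}$, so
\[
\|v\eta_{k}\|_{W^{\alpha,s}}^{s}\lesssim\sum_{j\sim k}\|g_{j}\|_{L^{s}}^{s}.
\]
Summing in $k$ and invoking finite multiplicity once more produces $\|v\|_{W^{\alpha,s}}^{s}\lesssim\sum_{j}\|g_{j}\|_{L^{s}}^{s}$. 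Writing $v=G_{\alpha}\ast h$ with $\|h\|_{L^{s}}=\|v\|_{W^{\alpha,s}}$ and passing to $|h|$ to restore nonnegativity, $\mathcal{C}(E)\leq\|h\|_{L^{s}}^{s}$, and taking infima completes the estimate.

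The principal obstacle is the reverse direction: a naive admissible density $\sum_{j}g_{j}$ for $E$ has $L^{s}$-norm controlled only by the Minkowski-type quantity $(\sum_{j}\|g_{j}\|_{L^{s}})^{s}$, which is far too large when $s>1$. The remedy is to work on the potential side, where Lemma \ref{IVstr} supplies a genuine $\ell^{s}$-decomposition, and the finite multiplicity of $\{\mathcal{B}^{(j)}\}$ bridges the gap between bounded overlap and disjointness in the summation step. When $\sum_{j}\mathcal{C}(E\cap\mathcal{B}^{(j)})=\infty$ the reverse inequality is trivial, so there is no loss in assuming the sum is finite, which is exactly what makes the construction of $v$ yield an element of $W^{\alpha,s}$.
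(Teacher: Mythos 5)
Your two directions are swapped in difficulty, and you have inverted the labour relative to the paper. The direction $\sum_{j}\mathcal{C}(E\cap\mathcal{B}^{(j)})\lesssim\mathcal{C}(E)$ is the substantive one (you present it as the easy one), and the direction $\mathcal{C}(E)\lesssim\sum_{j}\mathcal{C}(E\cap\mathcal{B}^{(j)})$ is nearly trivial, yet you build an elaborate machinery for it. Since $\{\mathcal{B}^{(j)}\}$ covers $\mathcal{R}$, one has $E=\bigcup_{j}(E\cap\mathcal{B}^{(j)})$ and countable subadditivity of $\mathcal{C}$ immediately gives $\mathcal{C}(E)\leq\sum_{j}\mathcal{C}(E\cap\mathcal{B}^{(j)})$; that is all the paper does for this direction, and the whole construction of $v=\sum_{j}u_{j}\eta_{j}$ is unnecessary. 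Moreover, as written, that construction has a genuine gap: from $\sum_{j}\|u_{j}\eta_{j}\|_{W^{\alpha,s}}^{s}<\infty$ one cannot conclude that the series $\sum_{j}u_{j}\eta_{j}$ converges in the Banach space $W^{\alpha,s}$ (the $\ell^{s}$ control is strictly weaker than $\ell^{1}$ when $s>1$), so $v\in W^{\alpha,s}$ is not established by an appeal to Lemma \ref{IVstr}, which only asserts the norm equivalence for $u$ already known to lie in $W^{\alpha,s}$. You would need either the full two-sided localization theorem of Strichartz that characterizes membership in $W^{\alpha,s}$ by the finiteness of the localized sum, or a truncation plus weak-compactness argument. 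All of this is moot once you notice subadditivity.

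For the nontrivial direction your argument is essentially correct and differs from the paper's route in an interesting way. The paper passes to an auxiliary capacity $\mathcal{C}'$ defined via Schwartz admissible functions, proves $\mathcal{C}=\mathcal{C}'$ using a cited result, applies Lemma \ref{IVstr} for compact sets to majorants $\varphi\eta_{j}$, and then runs a compact--open--arbitrary bootstrap via Fatou and outer regularity. You instead work directly with an admissible $f\geq 0$ for $\mathcal{C}(E)$ and localize the potential $u=G_{\alpha}\ast f$; this requires the pointwise multiplier fact that smooth compactly supported functions act boundedly on $W^{\alpha,s}$, which is implicit in Strichartz's theorem but should be cited explicitly. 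The replacement of $f_{j}$ by $|f_{j}|$ using positivity of $G_{\alpha}$ is correct and handles admissibility cleanly. Your route is arguably shorter and avoids the $\mathcal{C}'$ detour, at the cost of invoking the multiplier theorem for general $W^{\alpha,s}$ elements rather than just for Schwartz functions. Either is acceptable; but you should repair the presentation by putting this as the hard direction and disposing of the other by subadditivity.
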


\begin{proof}
First we note that by the finite multiplicity of $\{\mathcal{B}^{(j)}\}_{j\geq 0}$ and the countable subadditivity of $\mathcal{C}$, one has
\begin{align*}
\mathcal{C}(E)\leq C(n,\alpha,s)\sum_{j\geq 0}\mathcal{C}(E\cap\mathcal{B}^{(j)}).
\end{align*}
To complete the proof of this proposition, we need an auxiliary capacity which defined as follows. For any compact set $K\subseteq\mathcal{R}$, define
\begin{align*}
\mathcal{C}'(K)=\inf\{\|\varphi\|_{W^{\alpha,s}}^{s}: \varphi\in\mathcal{S}(\mathcal{R}),~\varphi\geq\chi_{K}\},
\end{align*}
where $W^{\alpha,s}$ is introduced in Lemma \ref{IVstr} and $\mathcal{S}(\mathcal{R})$ is the Schwartz class; see \cite[Definition 2.2.1]{GL} for the definition of $\mathcal{S}(\mathcal{R})$. For any open set $G\subseteq\mathcal{R}$, define
\begin{align*}
\mathcal{C}'(G)=\sup_{\substack{K~\text{compact}\\K\subseteq G}}\mathcal{C}'(K).
\end{align*}
If $E\subseteq\mathcal{R}$ is an arbitrary set, then we define
\begin{align*}
\mathcal{C}'(E)=\inf_{\substack{G~\text{open}\\E\subseteq G\subseteq\mathcal{R}}}\mathcal{C}'(G).
\end{align*}
It can be proved that $\mathcal{C}=\mathcal{C}'$; see \cite[Proposition 2.3.13]{AH} for details.

To prove this proposition, we first consider that $E=K$ is compact. Following the notations in Lemma \ref{IVstr}, for any $\varphi\in\mathcal{S}$ with $\varphi\geq\chi_{K}$, we have $\varphi\eta_{j}\geq\chi_{K\cap\mathcal{B}^{(j)}}$, and hence 
\begin{align*}
\left(\sum_{j\geq 0}\mathcal{C}(K\cap\mathcal{B}^{(j)})\right)^{1/s}\leq\left(\sum_{j\geq 0}\|\varphi\eta_{j}\|_{W^{\alpha,s}}^{s}\right)^{1/s}\leq C(n,\alpha,s)\|\varphi\|_{W^{\alpha,s}}.
\end{align*}
Then (\ref{IVSTRuse}) follows by taking infimum over all such $\varphi$.

Consider now that $E=G$ is open. There is an increasing sequence $\{K_{n}\}_{n=1}^{\infty}$ of compact sets in $\mathcal{R}$ such that 
\begin{align*}
G=\bigcup_{n=1}^{\infty}K_{n}.
\end{align*}
Then the Fatou's property of $\mathcal{C}$ gives
\begin{align*}
\sum_{j\geq 0}\mathcal{C}(G\cap\mathcal{B}^{(j)})&=\sum_{j\geq 0}\sup_{n\in\mathbb{N}}\mathcal{C}(K_{n}\cap\mathcal{B}^{(j)})\\
&=\sup_{n\in\mathbb{N}}\sum_{j\geq 0}\mathcal{C}(K_{n}\cap\mathcal{B}^{(j)})\\
&\leq C(n,\alpha,s)\sup_{n\in\mathbb{N}}\mathcal{C}(K_{n})\\
&=C(n,\alpha,s)\mathcal{C}(G),
\end{align*}
which yields (\ref{IVSTRuse}) for open sets $E=G$.

Finally, given arbitrary set $E\subseteq\mathcal{R}$, let $G\supseteq E$ be open. Then
\begin{align*}
\sum_{j\geq 0}\mathcal{C}(E\cap\mathcal{B}^{(j)})\leq\sum_{j\geq 0}\mathcal{C}(G\cap\mathcal{B}^{(j)})\leq C(n,\alpha,s)\mathcal{C}(G).
\end{align*} 
As a result, (\ref{IVSTRuse}) follows by the outer regularity of $\mathcal{C}$, which finishes the proof.
\end{proof}

\begin{proposition}\label{addition}
Let $1<p<\infty$ and $1<q<\infty$. Assume that either
\begin{align*}
1<r<q\leq p<\infty,\quad \alpha s=n
\end{align*}
or
\begin{align*}
1<r<q\leq p<\infty,\quad\frac{n-\alpha s}{n}\leq\frac{q}{p},\quad\alpha s<n.
\end{align*}
Then 
\begin{align*}
M^{p,r}\hookrightarrow M^{p,q}
\end{align*}
and
\begin{align*}
L^{\infty}\hookrightarrow M^{p,q}.
\end{align*}
In fact, it holds that 
\begin{align*}
C(n,\alpha,s,p,q)^{-1}\|f\|_{M^{p,q}}\leq\sup_{{\rm diam}(K)\leq 1}\frac{\|f\chi_{K}\|_{L^{p,q}}}{\mathcal{C}(K)^{1/q}}\leq C(n,\alpha,s,p,q)\|f\|_{M^{p,q}},
\end{align*}
where the supremum is taken over all compact sets $K\subseteq\mathcal{R}$ with nonzero capacities and diameters less than $1$.
\end{proposition}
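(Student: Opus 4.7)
\medskip

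\noindent\textbf{Proof plan.} The second inequality is trivial since restricting the supremum to compact sets of diameter at most $1$ can only decrease it. So I focus on the first (Strichartz-type) inequality, and then derive the two embeddings as corollaries together with the Sobolev capacity estimates (\ref{sobolev 1}) and (\ref{sobolev 2}).

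\medskip

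Let $M$ denote the localized supremum on the right. Fix an arbitrary compact $K\subseteq\mathcal{R}$ with $\mathcal{C}(K)>0$ and the unit-diameter covering $\{\mathcal{B}^{(j)}\}_{j\ge 0}$ from Lemma \ref{IVstr}. Disjointify by setting $K_{j}=(K\cap\mathcal{B}^{(j)})\setminus\bigcup_{i<j}\mathcal{B}^{(i)}$, so that $K=\bigsqcup_{j}K_{j}$ and $K_{j}\subseteq K\cap\mathcal{B}^{(j)}$. The key ingredient is the disjoint-support estimate
\begin{align*}
\|f\chi_{K}\|_{L^{p,q}}^{q}=p\int_{0}^{\infty}\Bigl(\sum_{j}|\{|f\chi_{K_{j}}|>t\}|\Bigr)^{q/p}t^{q-1}dt\leq\sum_{j}\|f\chi_{K_{j}}\|_{L^{p,q}}^{q},
\end{align*}
which uses precisely the hypothesis $q\le p$ (so $q/p\le 1$ and $(\sum a_{j})^{q/p}\le\sum a_{j}^{q/p}$). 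Since each $K_{j}$ has diameter at most $1$, the definition of $M$ together with the monotonicity $\|f\chi_{K_{j}}\|_{L^{p,q}}\le\|f\chi_{K\cap\mathcal{B}^{(j)}}\|_{L^{p,q}}$ yields
\begin{align*}
\|f\chi_{K}\|_{L^{p,q}}^{q}\le M^{q}\sum_{j\ge 0}\mathcal{C}(K\cap\mathcal{B}^{(j)})\le C(n,\alpha,s)\,M^{q}\mathcal{C}(K),
\end{align*}
where the last step invokes Proposition \ref{IVSTR}. Dividing by $\mathcal{C}(K)^{1/q}$ and taking the supremum over $K$ yields $\|f\|_{M^{p,q}}\le C\,M$.

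\medskip

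For the embedding $M^{p,r}\hookrightarrow M^{p,q}$, I apply the just-proved localization: for any compact $K$ with ${\rm diam}(K)\le 1$, the continuous embedding $L^{p,r}\hookrightarrow L^{p,q}$ (\cite[Proposition 1.4.10]{GL}) gives $\|f\chi_{K}\|_{L^{p,q}}\le C\|f\chi_{K}\|_{L^{p,r}}$, and since $\mathcal{C}(K)$ is bounded above by a constant depending only on $n,\alpha,s$ (as $K$ is contained in some unit ball), the factor $\mathcal{C}(K)^{1/r-1/q}$ is uniformly bounded, which concludes
\begin{align*}
\frac{\|f\chi_{K}\|_{L^{p,q}}}{\mathcal{C}(K)^{1/q}}\le C'\frac{\|f\chi_{K}\|_{L^{p,r}}}{\mathcal{C}(K)^{1/r}}\le C'\|f\|_{M^{p,r}}.
\end{align*}
For $L^{\infty}\hookrightarrow M^{p,q}$, I again use the localization. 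For ${\rm diam}(K)\le 1$,
\begin{align*}
\|f\chi_{K}\|_{L^{p,q}}\le\|f\|_{L^{\infty}}\|\chi_{K}\|_{L^{p,q}}=C(p,q)\|f\|_{L^{\infty}}|K|^{1/p},
\end{align*}
and the capacity estimates (\ref{sobolev 1}), (\ref{sobolev 2}) applied with $\varepsilon=q/p$ (permissible exactly under the two stated hypotheses $\alpha s=n,\,q\le p$ and $\alpha s<n,\,(n-\alpha s)/n\le q/p$) give $|K|^{1/p}\le C''\mathcal{C}(K)^{1/q}$, completing the argument.

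\medskip

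\noindent\textbf{Main obstacle.} The only delicate point is the disjoint-support Lorentz inequality, which is why $q\le p$ is essential; without it, the elementary inequality $(\sum a_{j})^{q/p}\le\sum a_{j}^{q/p}$ reverses and the localization fails. Everything else is bookkeeping around Proposition \ref{IVSTR} and the Sobolev embeddings.
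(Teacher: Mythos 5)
Your argument is correct, and it reaches the same estimate by a slightly different, arguably more elementary, route. The paper chooses a tiling of $\mathcal{R}$ by unit-diameter closed cubes $\{Q_{n}\}$ with disjoint interiors, sets $f_{n}=f\chi_{Q_{n}}$, writes $|f|=\bigl(\sum_{n}|f_{n}|^{q}\bigr)^{1/q}$, and then uses the identity $\|g\|_{L^{p,q}}^{q}=\||g|^{q}\|_{L^{p/q,1}}$ together with the \emph{normability} of $L^{p/q,1}$ (a nontrivial fact needing $p/q\geq1$, i.e. $q\le p$) to arrive at $\|f\chi_{K}\|_{L^{p,q}}^{q}\leq C(p,q)\sum_{n}\|f_{n}\chi_{K}\|_{L^{p,q}}^{q}$; this is precisely the $q$-convexity computation recycled from the proof of Theorem \ref{first predual}. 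You instead disjointify a unit-diameter cover and exploit disjointness of supports at the level of distribution functions together with the elementary concavity $(\sum a_{j})^{q/p}\leq\sum a_{j}^{q/p}$ (valid exactly for $q\le p$), obtaining the sharper inequality $\|f\chi_{K}\|_{L^{p,q}}^{q}\leq\sum_{j}\|f\chi_{K_{j}}\|_{L^{p,q}}^{q}$ with constant $1$, without invoking normability of $L^{p/q,1}$. After that, both proofs apply Proposition \ref{IVSTR}, and the two embeddings follow identically from $L^{p,r}\hookrightarrow L^{p,q}$, translation invariance of $\mathcal{C}$, and the Sobolev estimates (\ref{sobolev 1})--(\ref{sobolev 2}) with $\varepsilon=q/p$. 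One small technical note: to plug $\|f\chi_{K\cap\mathcal{B}^{(j)}}\|_{L^{p,q}}\leq M\,\mathcal{C}(K\cap\mathcal{B}^{(j)})^{1/q}$ into the definition of $M$ you should take the balls $\mathcal{B}^{(j)}$ closed (or replace them by the cubes $Q_{n}$ as in the paper) so that $K\cap\mathcal{B}^{(j)}$ is compact; this is immaterial but worth stating, since $M$ is defined as a supremum over compact sets.
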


\begin{proof}
Denote by
\begin{align*}
M=\sup_{{\rm diam}(K)\leq 1}\frac{\|f\chi_{K}\|_{L^{p,q}}}{\mathcal{C}(K)^{1/q}}
\end{align*}
for simplicity. Let $f\in\mathcal{M}(\mathcal{R})$ and $K\subseteq\mathcal{R}$ be compact. Suppose that $\{Q_{n}\}_{n=1}^{\infty}$ is a sequence of closed cubes with disjoint interior and unit diameters, whose union is the whole $\mathcal{R}$. Following the first part of the proof of Theorem \ref{first predual}, by denoting $f_{n}=f\chi_{Q_{n}}$, $n\in\mathbb{N}$, we obtain
\begin{align*}
\|f\chi_{K}\|_{L^{p,q}}&=\left\|\left(\sum_{n}|f_{n}|^{q}\right)^{\frac{1}{q}}\chi_{K}\right\|_{L^{p,q}}\\
&=\lim_{n\rightarrow\infty}\left\|\left(\sum_{i=1}^{n}|f_{i}|^{q}\right)^{\frac{1}{q}}\chi_{K}\right\|_{L^{p,q}}\\
&\leq C(p,q)\lim_{n\rightarrow\infty}\left(\sum_{i=1}^{n}\||f_{i}|^{q}\chi_{K}\|_{L^{p/q,1}}\right)^{\frac{1}{q}}\\
&=C(p,q)\left(\sum_{n}\|f_{n}\chi_{K}\|_{L^{p,q}}^{q}\right)^{\frac{1}{q}}\\
&\leq C(p,q)M\left(\sum_{n}\mathcal{C}(K\cap Q_{i})\right)^{\frac{1}{q}}.
\end{align*}
Using Proposition \ref{IVSTR}, one obtains
\begin{align}\label{strichartz}
\sum_{n}\mathcal{C}(K\cap Q_{i})\leq C(n,\alpha,s)\mathcal{C}(K).
\end{align}
Combining (\ref{strichartz}) with the above estimates, then $\|f\|_{M^{p,q}}\leq C(n,\alpha,s,p,q)M$. Note that $M\leq\|f\|_{M^{p,q}}$ is immediate. We remark that $\mathcal{C}$ is translation invariant, then $\mathcal{C}(K)\leq C(n,\alpha,s)$ for any compact set $K\subseteq\mathcal{R}$ with ${\rm diam}(K)\leq 1$. Consequently, we have
\begin{align*}
\|f\|_{M^{p,q}}&=\sup_{K}\frac{\|f\chi_{K}\|_{L^{p,q}}}{\mathcal{C}(K)^{1/q}}\\
&\leq C(n,\alpha,s,p,q)\sup_{{\rm diam}(K)\leq 1}\frac{\|f\chi_{K}\|_{L^{p,q}}}{\mathcal{C}(K)^{1/q}}\\
&\leq C(n,\alpha,s,p,q,r)\sup_{{\rm diam}(K)\leq 1}\frac{\|f\chi_{K}\|_{L^{p,r}}}{\mathcal{C}(K)^{1/q}}\\
&=C(n,\alpha,s,p,q,r)\sup_{{\rm diam}(K)\leq 1}\mathcal{C}(K)^{\frac{1}{r}-\frac{1}{q}}\frac{\|f\chi_{K}\|_{L^{p,r}}}{\mathcal{C}(K)^{1/r}}\\
&\leq C'(n,\alpha,s,p,q,r)\sup_{{\rm diam}(K)\leq 1}\frac{\|f\chi_{K}\|_{L^{p,r}}}{\mathcal{C}(K)^{1/r}}\\
&\leq C'(n,\alpha,s,p,q,r)\|f\|_{M^{p,r}}
\end{align*}
Subsequently, by letting $\varepsilon=q/p$ as in (\ref{sobolev 1}) and (\ref{sobolev 2}), we obtain
\begin{align*}
\|f\|_{M^{p,q}}&=\sup_{K}\frac{\|f\chi_{K}\|_{L^{p,q}}}{\mathcal{C}(K)^{1/q}}\\
&\leq\sup_{K}\frac{\|f\|_{L^{\infty}}\|\chi_{K}\|_{L^{p,q}}}{\mathcal{C}(K)^{1/q}}\\
&=\|f\|_{L^{\infty}}\sup_{K}\frac{|K|^{1/p}}{\mathcal{C}(K)^{1/q}}\\
&\leq C(n,\alpha,s,p,q)\|f\|_{L^{\infty}},
\end{align*}
which finishes the proof.
\end{proof}

\begin{remark}
\rm Note that $|E|\leq C(n,\alpha,s)\mathcal{C}(E)$ holds for $\alpha s\leq n$. For any $1<p<\infty$ and $1<q<\infty$, we have $\|f\chi_{K}\|_{L^{p,q}}\leq\|f\|_{L^{\infty}}|K|^{1/p}\leq C(n,\alpha,s,p)\|f\|_{L^{\infty}}\mathcal{C}(E)^{1/p}$, whence $L^{\infty}\hookrightarrow\mathcal{M}^{p,q}$.
\end{remark}

To prove Theorem \ref{second predual}, we need the following technical lemma.
\begin{lemma}\label{norm switching}
Let $1<p<\infty$ and $1<q<\infty$. For any $f\in\mathcal{M}(\mathcal{R})$, it holds that
\begin{align*}
\|f\|_{M^{p,q}}=\sup_{E}\frac{\|f\chi_{E}\|_{L^{p,q}}}{\mathcal{C}(E)^{1/q}},
\end{align*}
where the supremum is taken over all measurable sets $E\subseteq\mathcal{R}$ with $0<\mathcal{C}(E)<\infty$.
\end{lemma}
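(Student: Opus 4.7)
The plan is to establish the two inequalities separately. The direction $\|f\|_{M^{p,q}} \le \sup_{E}\tfrac{\|f\chi_{E}\|_{L^{p,q}}}{\mathcal{C}(E)^{1/q}}$ is immediate from the definitions, since every compact $K$ with $\mathcal{C}(K)>0$ is a measurable set with finite capacity (any compact set sits inside some bounded open neighborhood of finite capacity, obtained for instance by cutting off the Bessel kernel against a smooth bump). So the substance of the lemma is the reverse bound: given a measurable $E$ with $0<\mathcal{C}(E)<\infty$, I aim to show
\begin{align*}
\|f\chi_{E}\|_{L^{p,q}} \le \|f\|_{M^{p,q}}\,\mathcal{C}(E)^{1/q}.
\end{align*}

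The strategy is approximation from outside by an open set, then from inside by compacta. First, I invoke the outer regularity of the Bessel capacity to pick, for any $\varepsilon>0$, an open $G\supseteq E$ with $\mathcal{C}(G)<\mathcal{C}(E)+\varepsilon$. Outer regularity is read off directly from the definition of $\mathcal{C}$: if $f\ge 0$ is admissible for $E$ with $\|f\|_{L^{s}}^{s}<\mathcal{C}(E)+\varepsilon$, then the lower semicontinuity of $G_{\alpha}\ast f$ implies that $\{G_{\alpha}\ast f > 1-\delta\}$ is an open neighbourhood of $E$, and rescaling $f$ by $(1-\delta)^{-1}$ and sending $\delta\to 0^{+}$ yields the bound. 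Next, by $\sigma$-compactness of $\mathcal{R}$ I write $G=\bigcup_{j=1}^{\infty}K_{j}$ with $K_{j}\subseteq K_{j+1}$ compact (for instance $K_{j}=\{x\in G:d(x,G^{c})\ge 1/j\}\cap\overline{B_{j}(0)}$).

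The final ingredient is the Fatou property for the Lorentz quasi-norm: since $|f\chi_{K_{j}}|\uparrow |f\chi_{G}|$ pointwise, the distribution functions satisfy $|\{|f\chi_{K_{j}}|>t\}|\uparrow |\{|f\chi_{G}|>t\}|$, and the monotone convergence theorem applied to $\int_{0}^{\infty}|\{\cdot>t\}|^{q/p}t^{q-1}\,dt$ gives $\|f\chi_{K_{j}}\|_{L^{p,q}}\uparrow \|f\chi_{G}\|_{L^{p,q}}$. Applying the definition of $\|f\|_{M^{p,q}}$ to each $K_{j}$ (the case $\mathcal{C}(K_{j})=0$ is harmless since absolute continuity $|K_{j}|=0$ gives $\|f\chi_{K_{j}}\|_{L^{p,q}}=0$) and using $\mathcal{C}(K_{j})\le\mathcal{C}(G)$, I obtain
\begin{align*}
\|f\chi_{K_{j}}\|_{L^{p,q}} \le \|f\|_{M^{p,q}}\,\mathcal{C}(G)^{1/q}.
\end{align*}
Passing to the limit in $j$ and using $\|f\chi_{E}\|_{L^{p,q}}\le\|f\chi_{G}\|_{L^{p,q}}$, then sending $\varepsilon\to 0^{+}$, yields the desired inequality, and taking the supremum over $E$ finishes the proof. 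The only step that requires care is the Fatou property of the quasi-norm $\|\cdot\|_{L^{p,q}}$ (which does not follow from general normed-space theory in the non-Banach range $q<p$), but it is recovered by the direct distribution-function computation just described.
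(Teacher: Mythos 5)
Your proof is correct and follows essentially the same path as the paper's: outer regularity to pass from a measurable $E$ to an open $G$ with capacity close to $\mathcal{C}(E)$, then inner exhaustion of $G$ by compacta together with monotone convergence of the distribution-function integral defining the Lorentz quasi-norm. The only cosmetic difference is that the paper introduces an intermediate supremum over open sets and invokes $\mathcal{C}(K_n)\to\mathcal{C}(G)$, whereas you avoid appealing to inner regularity by using only monotonicity $\mathcal{C}(K_j)\le\mathcal{C}(G)$ and folding the outer-regularity $\varepsilon$ in at the last step, a mild streamlining.
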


\begin{proof}
Denote by 
\begin{align*}
M&=\sup_{E}\frac{\|f\chi_{E}\|_{L^{p,q}}}{\mathcal{C}(E)^{1/q}},\\
M'&=\sup_{G}\frac{\|f\chi_{G}\|_{L^{p,q}}}{\mathcal{C}(G)^{1/q}}
\end{align*}
for simplicity, where the supremum in $M'$ is taken over all open sets $G\subseteq\mathcal{R}$ with $0<\mathcal{C}(G)<\infty$. It is evident that $\|f\|_{M^{p,q}}\leq M$. To show that $M\leq\|f\|_{M^{p,q}}$, we need the auxiliary term $M'$. To this end, let $E\subseteq\mathcal{R}$ be measurable with $0<\mathcal{C}(E)<\infty$. By the outer regularity of $\mathcal{C}$, one can find a sequence $\{G_{n}\}_{n=1}^{\infty}$ of open subsets of $\mathcal{R}$ such that $G_{n}\supseteq E$, $\mathcal{C}(G_{n})<\infty$, and $\mathcal{C}(G_{n})\rightarrow\mathcal{C}(E)$. Then
\begin{align*}
\frac{\|f\chi_{E}\|_{L^{p,q}}}{\mathcal{C}(E)^{1/q}}=\lim_{n\rightarrow\infty}\frac{\|f\chi_{E}\|_{L^{p,q}}}{\mathcal{C}(G_{n})^{1/q}}\leq\sup_{n\in\mathbb{N}}\frac{\|f\chi_{G_{n}}\|_{L^{p,q}}}{\mathcal{C}(G_{n})^{1/q}}\leq M'.
\end{align*}
Taking supremum over all such $E$, we deduce that $M\leq M'$. It remains to show that $M'=\|f\|_{M^{p,q}}$. Let $G\subseteq\mathcal{R}$ be open with $0<\mathcal{C}(G)<\infty$. Since $G$ is open, there exists an increasing sequence $\{K_{n}\}_{n=1}^{\infty}$ of compact subsets of $G$ such that 
\begin{align*}
G=\bigcup_{n\in\mathbb{N}}K_{n}.
\end{align*}
Then $\mathcal{C}(K_{n})\rightarrow\mathcal{C}(G)$ and $f\chi_{K_{n}}\uparrow f\chi_{G}$, whence 
\begin{align*}
\frac{\|f\chi_{G}\|_{L^{p,q}}}{\mathcal{C}(G)^{1/q}}=\lim_{n\rightarrow\infty}\frac{\|f\chi_{K_{n}}\|_{L^{p,q}}}{\mathcal{C}(K_{n})^{1/q}}\leq\|f\|_{M^{p,q}}.
\end{align*}
Taking supremum over all such $G$, one obtains $M'\leq\|f\|_{M^{p,q}}$, which completes the proof.
\end{proof}

\begin{remark}
\rm By re-examining the above proof, one may obtain a similar formula for $\left\|\cdot\right\|_{\mathcal{M}^{p,q}}$, where $1<p<\infty$ and $1<q\leq\infty$.
\end{remark}

\begin{proof}[Proof of Theorem \ref{second predual}]
We will prove only for $(B^{p',q'})^{\ast}\approx M^{p,q}$, the assertion that $(\mathcal{B}^{p',q'})^{\ast}\approx\mathcal{M}^{p,q}$ can be argued in a similar fashion. Let $f\in M^{p,q}$ and $g\in B^{p',q'}$ be given. Decompose $g$ into an arbitrary block decomposition of the form
\begin{align}\label{usual decomposition}
g=\sum_{k}\lambda_{k}b_{k},
\end{align}
as in the definition of $B^{p',q'}$. We may assume the associated $E_{k}$ satisfies $\mathcal{C}(E_{k})>0$, for otherwise the term $b_{k}$ in (\ref{usual decomposition}) can be dropped as $b_{k}$ would vanish on $E_{k}$ almost everywhere. Note that $\mathcal{C}(E_{k})<\infty$ as $E_{k}$ is a bounded set. Then Lemma \ref{norm switching} gives
\begin{align}
|\mathcal{L}_{f}(g)|&=\left|\int_{\mathcal{R}}f(x)g(x)dx\right|\notag\\
&\leq\int_{\mathcal{R}}|f(x)g(x)|dx\notag\\
&\leq\sum_{k}|\lambda_{k}|\int_{E_{k}}|f(x)||b_{k}(x)|dx\notag\\
&\leq C(p,q)\sum_{k}|\lambda_{k}|\|f\chi_{E_{k}}\|_{L^{p,q}}\|b_{k}\|_{L^{p',q'}}\notag\\
&\leq C(p,q)\sum_{k}|\lambda_{k}|\frac{\|f\chi_{E_{k}}\|_{L^{p,q}}}{\mathcal{C}(E_{k})^{1/q}}\mathcal{C}(E_{k})^{\frac{1}{q}}\|b_{k}\|_{L^{p',q'}}\notag\\
&\leq C(p,q)\|f\|_{M^{p,q}}\sum_{k}|\lambda_{k}|.\label{4.1}
\end{align}
Taking infimum to all such block decompositions, one obtains $M^{p,q}\hookrightarrow(B^{p',q'})^{\ast}$. We also infer from (\ref{4.1}) that 
\begin{align*}
\|f\|_{(B^{p',q'})'}=\sup\left\{\int_{\mathcal{R}}|f(x)g(x)|dx:\|g\|_{B^{p',q'}}\leq 1\right\}\leq C(p,q)\|f\|_{M^{p,q}},
\end{align*}
which gives $M^{p,q}\hookrightarrow(B^{p',q'})'$.

Conversely, let $\mathcal{L}\in(B^{p',q'})^{\ast}$ be given. If $g$ is a nonzero function in $L^{p',q'}$ with $\{g\ne 0\}\subseteq E$ for some bounded set, then it is easy to see that $g\in B^{p',q'}$ as we may write that $g=\mathcal{C}(E)^{1/q}\|g\|_{L^{p',q'}}\widetilde{g}$ with $\widetilde{g}=g/(\mathcal{C}(E)^{1/q}\|g\|_{L^{p',q'}})$. This leads to 
\begin{align*}
\|g\|_{B^{p',q'}}\leq\mathcal{C}(E)^{\frac{1}{q}}\|g\|_{L^{p',q'}},
\end{align*}
and hence 
\begin{align*}
|\mathcal{L}(g)|\leq\|\mathcal{L}\|\mathcal{C}(E)^{\frac{1}{q}}\|g\|_{L^{p',q'}}.
\end{align*}
In other words, the restriction functional $\mathcal{L}|_{L^{p',q'}(E)}:L^{p',q'}(E)\rightarrow\mathbb{R}$ is continuous. Using the fact that $L^{p,q}(E)\approx(L^{p',q'}(E))^{\ast}$, there is an $f_{E}\in L^{p,q}(E)$ such that 
\begin{align*}
\mathcal{L}(g)=\int_{E}f_{E}(x)g(x)dx,\quad g\in L^{p',q'}(E).
\end{align*}
By restricting $E_{N}=[-N,N]^{n}$, $N\in\mathbb{N}$, we have
\begin{align*}
\int_{E_{N}}f_{E_{N}}(x)g(x)dx=\int_{E_{N}}f_{E_{N+1}}(x)g(x)dx
\end{align*}
for all continuous functions $g$ on $E_{N}$, which yields $f_{E_{N}}=f_{E_{N+1}}$ almost everywhere on $E_{N}$ by the density result in Proposition \ref{lorentz dense}. Then we may define unambiguously a function $f$ with $f(x)=f_{E_{N}}(x)$ for $x\in E_{N}$. Then 
\begin{align}\label{4.2}
\mathcal{L}(g)=\int_{\mathcal{R}}f(x)g(x)dx
\end{align}
holds for any $g\in L^{p',q'}$ with $\{g\ne 0\}\subseteq E_{N}$. In particular, given a compact set $K\subseteq\mathcal{R}$, say, $K\subseteq E_{N}$, and $g={\rm sgn}(f)\chi_{K}$, the equality (\ref{4.2}) shows that $f$ is integrable on $K$. Since the compact set $K$ is arbitrary, we deduce that $f$ is locally integrable on $\mathcal{R}$. Repeating the proof of \cite[Theorem 1.4.16 (vi)]{GL}, one can show that the function $f$ satisfies 
\begin{align*}
\|f\chi_{K}\chi_{|f|\leq N}\|_{L^{p,q}}\leq C(p,q)\|\mathcal{L}\|\mathcal{C}(K)^{\frac{1}{q}},\quad N=1,2,\ldots.
\end{align*}
Taking $N\rightarrow\infty$, we have $\|f\chi_{K}\|_{L^{p,q}}\leq C(p,q)\|\mathcal{L}\|\mathcal{C}(K)^{1/q}$, which gives 
\begin{align}\label{norm dual}
\|f\|_{M^{p,q}}\leq C(p,q)\|\mathcal{L}\|.
\end{align}
Now we extend the validity of (\ref{4.2}) to all $g\in B^{p',q'}$. As before, let $g$ be as in (\ref{usual decomposition}). Consider the compactly supported functions $g_{n}$, $n=1,2,\ldots$ defined by
\begin{align*}
g_{n}=\sum_{|k|\leq n}\lambda_{k}b_{k}.
\end{align*}
The sequence $\{g_{n}\}_{n=1}^{\infty}$ converges to $g$ in $B^{p',q'}$ as $n\rightarrow\infty$. On the other hand, the functions 
\begin{align*}
h_{n}=\sum_{|k|\leq n}|\lambda_{k}||b_{k}|,\quad n=1,2,\ldots,
\end{align*}
belong to $B^{p',q'}$ with 
\begin{align*}
\|h_{n}\|_{B^{p',q'}}\leq\sum_{|k|\leq n}|\lambda_{k}|.
\end{align*}
Using (\ref{4.1}) with $|f|$ in place of $f$, we have 
\begin{align*}
\int_{\mathcal{R}}|f(x)h_{n}(x)|dx\leq C(p,q)\|f\|_{M^{p,q}}\|h_{n}\|_{B^{p',q'}}\leq C(p,q)\|f\|_{M^{p,q}}\sum_{|k|\leq n}|\lambda_{k}|.
\end{align*}
Taking $n\rightarrow\infty$ and using monotone convergence theorem, one has
\begin{align*}
\int_{\mathcal{R}}|f(x)|\sum_{k}|\lambda_{k}||b_{k}(x)|dx\leq C(p,q)\|f\|\sum_{k}|\lambda_{k}|<\infty,
\end{align*}
which also shows that $fg\in L^{1}$. As a result, using Lebesgue dominated convergence theorem and the continuity of $\mathcal{L}$, we have
\begin{align*}
\mathcal{L}(g)=\lim_{n\rightarrow\infty}\mathcal{L}(g_{n})=\int_{\mathcal{R}}f(x)g_{n}(x)dx=\int_{\mathcal{R}}f(x)g(x)dx,
\end{align*}
which yields (\ref{4.2}) for general $g\in B^{p',q'}$. We conclude that $(B^{p',q'})^{\ast}\hookrightarrow M^{p,q}$. Further, we infer from (\ref{norm dual}) that 
\begin{align*}
\|f\|_{M^{p,q}}\leq C(p,q)\sup\left\{\left|\int_{\mathcal{R}}f(x)g(x)dx\right|:\|g\|_{B^{p',q'}}\leq 1\right\}\leq C(p,q)\|f\|_{(B^{p',q'})'},
\end{align*}
which gives $(B^{p',q'})'\hookrightarrow M^{p,q}$.

Finally, we show that $B^{p',q'}$ is a Banach space. It can be seen from the above argument that if a block decomposition (\ref{usual decomposition}) of $g$ exists, then the sum in (\ref{usual decomposition}) is almost everywhere absolutely convergent. As a consequence, the space $B^{p',q'}$ is topological complete. To finish the proof, we only need to show that if $\|g\|_{B^{p',q'}}=0$, then $g=0$ almost everywhere. To this end, let $f\in C_{0}$ be given. Since $f\in M^{p,q}$, it follows from the duality result above that
\begin{align*}
\int_{\mathcal{R}}|f(x)g(x)|dx\leq C(n,\alpha,s,p,q)\|f\|_{M^{p,q}}\|g\|_{B^{p',q'}}=0.
\end{align*}
Then it is a standard fact in analysis that $g=0$ almost everywhere as $f\in C_{0}$ is arbitrary. The proof is now finished.
\end{proof}

\begin{remark}
\rm For $1<p<\infty$ and $1<q<\infty$, it is worth mentioning that $B^{p,q}$ is solid in the sense that if $f\in B^{p,q}$ and $|g|\leq|f|$, then $g\in B^{p,q}$. To see this, let
\begin{align*}
f=\sum_{k}\lambda_{k}b_{k}
\end{align*}
be a block decomposition. Express $g$ as
\begin{align*}
g=\sum_{k}\lambda_{k}gf^{-1}\chi_{\{f\ne 0\}}b_{k}.
\end{align*}
Then
\begin{align*}
\{gf^{-1}\chi_{\{f\ne 0\}}b_{k}\ne 0\}\subseteq\{b_{k}\ne 0\}\subseteq E_{k}
\end{align*}
and
\begin{align*}
\mathcal{C}(E_{k})^{1/q'}\|gf^{-1}\chi_{\{f\ne 0\}}b_{k}\|_{L^{p,q}}\leq\mathcal{C}(E_{k})^{1/q'}\|b_{k}\|_{L^{p,q}}\leq 1.
\end{align*}
We conclude that 
\begin{align*}
\|g\|_{B^{p,q}}\leq\sum_{k}|\lambda_{k}|.
\end{align*}
Taking infimum over all such block decompositions, one obtains $\|g\|_{B^{p,q}}\leq\|f\|_{B^{p,q}}$. Hence $g\in B^{p,q}$, as claimed.
\end{remark}

\begin{proof}[Proof of Theorem \ref{embed}]
We first show (\ref{first embed}) for $1<q\leq p<\infty$, where its proof is inspired by \cite[Theorem 2.1]{MST}. Define the auxiliary seminorm $\left\|\cdot\right\|$ by
\begin{align}\label{N block}
\|f\|=\inf\left\{\sum_{k}|\lambda_{k}|:f=\sum_{k}\lambda_{k}b_{k}\right\},
\end{align}
where the infimum is taken over all pointwise almost everywhere convergence of infinite sum consisting of $b_{k}\in N^{p,q}$ such that $\|b_{k}\|_{N^{p,q}}\leq 1$. We claim that 
\begin{align}\label{self twist}
C(n,\alpha,s,p,q)^{-1}\left\|\cdot\right\|_{N^{p,q}}\leq\left\|\cdot\right\|\leq C(n,\alpha,s,p,q)\left\|\cdot\right\|_{N^{p,q}}. 
\end{align}
Let $\varepsilon>0$ and write $f=(\|f\|_{N^{p,q}}+\varepsilon)\cdot f/(\|f\|_{N^{p,q}}+\varepsilon)$. One obtains at once that
\begin{align*}
\|f\|\leq\|f\|_{N^{p,q}}+\varepsilon.
\end{align*}
The arbitrariness of $\varepsilon$ gives $\left\|\cdot\right\|\leq\left\|\cdot\right\|_{N^{p,q}}$. For the converse, let $\varepsilon>0$ and $f$ be decomposed as in (\ref{N block}). Then
\begin{align*}
\sum_{k}|\lambda_{k}|\leq\|f\|+\varepsilon.
\end{align*}
As $\|b_{k}\|_{N^{p,q}}\leq 1$, there is an $\omega_{k}\in L^{1}(\mathcal{C})\cap A_{1}^{\rm loc}$ such that $\|\omega_{k}\|_{L^{1}(\mathcal{C})}\leq 1$, $[\omega_{k}]_{A_{1}^{\rm loc}}\leq\mathfrak{c}$, and 
\begin{align*}
\|b_{k}\omega_{k}^{-1/p'}\|_{L^{q,p}}\leq \|b_{k}\|_{N^{p,q}}+\varepsilon\leq 1+\varepsilon.
\end{align*}
Using H\"older's inequality, we have 
\begin{align*}
|f|&\leq\sum_{k}|\lambda_{k}||b_{k}|\\
&=\sum_{k}|\lambda_{k}|\omega_{k}^{\frac{1}{q'}}|b_{k}|\omega_{k}^{-\frac{1}{q'}}\\
&\leq\left(\sum_{k}|\lambda_{k}|\omega_{k}\right)^{\frac{1}{q'}}\left(\sum_{k}|\lambda_{k}||b_{k}|^{q}\omega_{k}^{1-q}\right)^{\frac{1}{q}}.
\end{align*}
Denote by
\begin{align*}
\omega=\frac{\sum_{k}|\lambda_{k}|\omega_{k}}{\sum_{k}|\lambda_{k}|}.
\end{align*}
First we verify that 
\begin{align*}
\|\omega\|_{L^{1}(\mathcal{C})}\leq C(n,\alpha,s)\frac{1}{\sum_{k}|\lambda_{k}|}\sum_{k}|\lambda_{k}|\|\omega_{k}\|_{L^{1}(C)}\leq C(n,\alpha,s)
\end{align*}
by the normability of $L^{1}(\mathcal{C})$. Moreover, we have 
\begin{align*}
{\bf M}^{\rm loc}\omega\leq\frac{1}{\sum_{k}|\lambda_{k}|}\sum_{k}|\lambda_{k}|{\bf M}^{\rm loc}\omega_{k}\leq\mathfrak{c}\cdot\frac{1}{\sum_{k}|\lambda_{k}|}\sum_{k}|\lambda_{k}|\omega_{k}\leq\mathfrak{c}\cdot\omega
\end{align*}
almost everywhere, which shows that $[\omega]_{A_{1}^{\rm loc}}\leq\mathfrak{c}$. Recall the assumption that $1<q\leq p<\infty$. Then $p/q\geq 1$, whence $\left\|\cdot\right\|_{L^{p/q,1}}$ is quasi-additive. Subsequently, it holds that 
\begin{align*}
\|f\|_{N^{p,q}}&\leq\|f\omega^{-1/q'}\|_{L^{p,q}}\\
&=C(n,\alpha,s,q)\left(\sum_{k}|\lambda_{k}|\right)^{\frac{1}{q'}}\left\|\left(\sum_{k}|\lambda_{k}||b_{k}|^{q}\omega_{k}^{1-q}\right)^{\frac{1}{q}}\right\|_{L^{p,q}}\\
&=C(n,\alpha,s,q)\left(\sum_{k}|\lambda_{k}|\right)^{\frac{1}{q'}}\left\|\sum_{k}|\lambda_{k}||b_{k}|^{q}\omega_{k}^{1-q}\right\|_{L^{p/q,1}}^{\frac{1}{q}}\\
&\leq C(n,\alpha,s,p,q)\left(\sum_{k}|\lambda_{k}|\right)^{\frac{1}{q'}}\left[\sum_{k}|\lambda_{k}|\left\||b_{k}|^{q}\omega_{k}^{1-q}\right\|_{L^{p/q,1}}\right]^{\frac{1}{q}}\\
&=C(n,\alpha,s,p,q)\left(\sum_{k}|\lambda_{k}|\right)^{\frac{1}{q'}}\left[\sum_{k}|\lambda_{k}|\|b_{k}\omega_{k}^{-1/q'}\|_{L^{p,q}}^{q}\right]^{\frac{1}{q}}\\
&\leq C(n,\alpha,s,p,q)(1+\varepsilon)\sum_{k}|\lambda_{k}|.
\end{align*}
Taking infimum over all such decompositions (\ref{N block}), the claim (\ref{self twist}) follows by the arbitrariness of $\varepsilon$. 

Let $f\in B^{p,q}$ be given.  Decompose $f$ into an arbitrary block decomposition of the form
\begin{align*}
f=\sum_{k}\lambda_{k}b_{k},
\end{align*}
as in the definition of $B^{p,q}$. Note that $\{b_{k}\ne 0\}\subseteq E_{k}$, where $E_{k}$ is bounded with $\mathcal{C}(E_{k})^{1/q'}\|b_{k}\|_{L^{p,q}}\leq 1$. Let $V^{E_{k}}$ be the nonlinear potential associated to $E_{k}$ with the admissible exponent $\delta$. Then $(V^{E_{k}})^{\delta}/\mathcal{C}(E_{k})\in L^{1}(\mathcal{C})\cap A_{1}^{\rm loc}$, $[(V^{E_{k}})^{\delta}/\mathcal{C}(E_{k})]_{A_{1}^{\rm loc}}\leq\mathfrak{c}$, and $\|(V^{E_{k}})^{\delta}/\mathcal{C}(E_{k})\|_{L^{1}(\mathcal{C})}\leq C(n,\alpha,s)$. Hence
\begin{align*}
\|b_{k}\|_{N^{p,q}}&\leq C(n,\alpha,s,p,q)\left\|b_{k}\left[\frac{(V^{E_{k}})^{\delta}}{\mathcal{C}(E_{k})}\right]^{-1/q'}\right\|_{L^{p,q}}\\
&\leq C(n,\alpha,s,p,q)\mathcal{C}(E_{k})^{1/q'}\|b_{k}\|_{L^{p,q}}\\
&\leq C(n,\alpha,s,p,q)
\end{align*}
by noting that $(V^{E_{k}})^{\delta}\geq 1$ on $E_{k}$ and $b_{k}=0$ on $\mathcal{R}\setminus E_{k}$. Combining (\ref{N block}) with (\ref{self twist}), we deduce that 
\begin{align*}
\|f\|_{N^{p,q}}\leq C(n,\alpha,s,p,q)\|f\|_{B^{p,q}},
\end{align*}
which gives $B^{p,q}\hookrightarrow N^{q,p}$ when $1<q\leq p<\infty$.

It remains to show (\ref{second embed}) for $1<p\leq q<\infty$. We may assume for $p<q$ as the case where $p=q$ reduces to the result in \cite{OP}. Let $\varepsilon>0$ and $f\in N^{q,p}$ be given. There is an $\omega\in L^{1}(\mathcal{C})$ such that $\|\omega\|_{L^{1}(\mathcal{C})}\leq 1$ and 
\begin{align*}
\|f\omega^{-1/q'}\|_{L^{p,q}}\leq\|f\|_{N^{p,q}}+\varepsilon.
\end{align*}
Note that 
\begin{align*}
\sum_{k\in\mathbb{Z}}2^{k}\mathcal{C}(\{2^{k-1}<\omega\leq 2^{k}\})&=\frac{1}{4}\sum_{k\in\mathbb{Z}}\int_{2^{k-2}}^{2^{k-1}}\mathcal{C}(\{2^{k-1}<\omega\leq 2^{k}\})dt\\
&\leq\frac{1}{4}\sum_{k\in\mathbb{Z}}\int_{2^{k-2}}^{2^{k-1}}\mathcal{C}(\{\omega>t\})dt\\
&=\frac{1}{4}\|\omega\|_{L^{1}(\mathcal{C})}\\
&\leq\frac{1}{4}.
\end{align*}
Let $E_{k}=\{2^{k-1}<\omega\leq 2^{k}\}$ and $D_{l}=\{l-1\leq|x|<l\}$, $k\in\mathbb{Z}$, $l\in\mathbb{N}$. Since $\omega(x)<\infty$ $\mathcal{C}$-quasi-everywhere, we can express $f$ as
\begin{align*}
f=\sum_{k,l}f\chi_{E_{k}\cap D_{l}}=\sum_{k,l}\lambda_{k,l}b_{k,l}
\end{align*}
almost everywhere with 
\begin{align*}
\lambda_{k,l}&=\|f\chi_{E_{k}\cap D_{l}}\|_{L^{p,q}}\mathcal{C}(E_{k}\cap D_{l})^{1/q'},\\
b_{k,l}&=\|f\chi_{E_{k}\cap D_{l}}\|_{L^{p,q}}^{-1}\mathcal{C}(E_{k}\cap D_{l})^{-1/q'}f\chi_{E_{k}\cap D_{l}},
\end{align*}
where we have used the convention that $b_{k,l}=0$ whenever $f=0$ almost everywhere on $E_{k}\cap D_{l}$ in view of (\ref{sobolev 1}) and (\ref{sobolev 2}). It is immediate that 
\begin{align*}
\mathcal{C}(E_{k}\cap D_{l})^{1/q'}\|b_{k,l}\|_{L^{p,q}}=1.
\end{align*}
Subsequently, it follows that 
\begin{align*}
\sum_{k,l}\lambda_{k,l}&=\sum_{k,l}\|f\chi_{E_{k}\cap D_{l}}\|_{L^{p,q}}\mathcal{C}(E_{k}\cap D_{l})^{1/q'}\\
&=\sum_{k,l}\|f\omega^{-1/q'}\omega^{1/q'}\chi_{E_{k}\cap D_{l}}\|_{L^{p,q}}\mathcal{C}(E_{k}\cap D_{l})^{1/q'}\\
&\leq\sum_{k,l}\|f\omega^{-1/q'}\chi_{E_{k}\cap D_{l}}\|_{L^{p,q}}2^{k/q'}\mathcal{C}(E_{k}\cap D_{l})^{1/q'}\\
&\leq\left(\sum_{k,l}\|f\omega^{-1/q'}\chi_{E_{k}\cap D_{l}}\|_{L^{p,q}}^{q}\right)^{\frac{1}{q}}\left(\sum_{k,l}2^{k}\mathcal{C}(E_{k}\cap D_{l})\right)^{\frac{1}{q'}}.
\end{align*}
Since $1<p<q<\infty$, repeating the proof of Theorem \ref{first predual} with $r'=q'$, we have
\begin{align*}
\left(\sum_{k,l}\|f\omega^{-1/q'}\chi_{E_{k}\cap D_{l}}\|_{L^{p,q}}^{q}\right)^{\frac{1}{q}}&\leq C(p,q)\left\|\left(\sum_{k,l}(f\omega^{-1/q'})^{q}\chi_{E_{k}\cap D_{l}}\right)^{\frac{1}{q}}\right\|_{L^{p,q}}\\
&=C(p,q)\|f\omega^{-1/q'}\|_{L^{p,q}}.
\end{align*}
Moreover, using the quasi-additivity of $\mathcal{C}$ (see \cite[Theorem 4]{AD}), we obtain
\begin{align*}
\sum_{k,l}2^{k}\mathcal{C}(E_{k}\cap D_{l})\leq C(n,\alpha,s)\sum_{k}2^{k}\mathcal{C}(E_{k})\leq C'(n,\alpha,s).
\end{align*}
We conclude that 
\begin{align*}
\sum_{k,l}\lambda_{k,l}\leq C(n,\alpha,s,p,q)\|f\omega^{-1/q'}\|_{L^{p,q}}\leq C(n,\alpha,s,p,q)(\|f\|_{N^{p,q}}+\varepsilon),
\end{align*}
and hence 
\begin{align*}
\|f\|_{B^{p,q}}\leq C(n,\alpha,s,p,q)(\|f\|_{N^{p,q}}+\varepsilon).
\end{align*}
The arbitrariness of $\varepsilon$ leads to $N^{p,q}\hookrightarrow B^{p,q}$ for $1<p<q<\infty$. The proof is now complete.
\end{proof}

\begin{proof}[Proof of Proposition \ref{fatou property}]
We first prove for the weak Fatou's property of $N^{p,q}$. To this end, let $\{f_{n}\}_{n=1}^{\infty}\subseteq N^{p,q}$ be an increasing sequence of nonnegative functions. Denote by 
\begin{align*}
f=\sup\limits_{n\in\mathbb{N}}f_{n}
\end{align*}
and
\begin{align*}
M=\sup_{n\in\mathbb{N}}\|f_{n}\|_{N^{p,q}}<\infty
\end{align*}
for simplicity. Let $\varepsilon>0$ be given. For each $n\in\mathbb{N}$, there is an $\omega_{n}$ as in the definition of $N^{p,q}$ such that 
\begin{align*}
\|f_{n}\omega_{n}^{-1/q'}\|_{N^{p,q}}<\|f_{n}\|_{L^{p,q}}+\varepsilon.
\end{align*}
Since $\|\omega_{n}\|_{L^{1}(\mathcal{C})}\leq 1$ for all $n\in\mathbb{N}$ and $L^{1}(\mathcal{C})\hookrightarrow L^{1}$, we can use K\'omlos theorem to obtain a subsequence, still denoted by $\{\omega_{n}\}_{n=1}^{\infty}$, and such that
\begin{align*}
\sigma_{n}=\frac{\omega_{1}+\cdots+\omega_{n}}{n}\rightarrow\overline{\omega}
\end{align*}
almost everywhere for some nonnegative measurable function $\overline{\omega}$ (see \cite[Theorem 3.1]{DL}). Define
\begin{align*}
\omega(x)=\begin{cases}
\lim_{n\rightarrow\infty}\sigma_{n}(x),\quad&\text{if the limit exists},\\
0,\quad&\text{otherwise}.
\end{cases}
\end{align*}
Then $\omega\leq\liminf\limits_{n\rightarrow\infty}\sigma_{n}$ and $\omega=\overline{\omega}$ almost everywhere. Further, we have 
\begin{align*}
\|\omega\|_{L^{1}(\mathcal{C})}&\leq\|\liminf_{n\rightarrow\infty}\sigma_{n}\|_{L^{1}(\mathcal{C})}\\
&\leq\liminf_{n\rightarrow\infty}\|\sigma_{n}\|_{L^{1}(\mathcal{C})}\\
&\leq C(n,\alpha,s)\liminf_{n\rightarrow\infty}\frac{1}{n}\sum_{k=1}^{n}\|\omega_{k}\|_{L^{1}(\mathcal{C})}\\
&\leq C(n,\alpha,s).
\end{align*}
Moreover, it holds that 
\begin{align*}
{\bf M}^{\rm loc}\omega={\bf M}^{\rm loc}\overline{\omega}\leq\liminf_{n\rightarrow\infty}\frac{1}{n}\sum_{k=1}^{n}{\bf M}^{\rm loc}\omega_{k}\leq\mathfrak{c}\cdot\omega
\end{align*}
almost everywhere, which yields $\omega\in A_{1}^{\rm loc}$ with $[\omega]_{A_{1}^{\rm loc}}\leq\mathfrak{c}$. In other words, $\omega$ satisfies the weight conditions in $N^{p,q}$. For fixed $k\in\mathbb{N}$, we have 
\begin{align*}
\|f_{k}\omega^{-1/q'}\|_{L^{p,q}}&=\|f_{k}\overline{\omega}^{-1/q'}\|_{L^{p,q}}\\
&\leq\liminf_{n\rightarrow\infty}\left\|f_{k}\left(\frac{1}{n}\sum_{i=k}^{k+n-1}\omega_{i}\right)^{-1/q'}\right\|_{L^{p,q}}\\
&\leq\liminf_{n\rightarrow\infty}\left\|\frac{1}{n}\sum_{i=k}^{k+n-1}f_{k}\omega_{i}^{-1/q'}\right\|_{L^{p,q}}\\
&\leq C(n,\alpha,s,p,q)\liminf_{n\rightarrow\infty}\frac{1}{n}\sum_{i=k}^{k+n-1}\|f_{k}\omega_{i}^{-1/q'}\|_{L^{p,q}}\\
&\leq C(n,\alpha,s,p,q)\liminf_{n\rightarrow\infty}\frac{1}{n}\sum_{i=k}^{k+n-1}\|f_{i}\omega_{i}^{-1/q'}\|_{L^{p,q}}\\
&\leq C(n,\alpha,s,p,q)(M+\varepsilon),
\end{align*}
where we have used Jensen's inequality for the convex function $u\rightarrow u^{-1/q'}$, $u>0$, and also the fact that $f_{k}\leq f_{i}$ for $k\leq i$. Taking $k\rightarrow\infty$, we obtain
\begin{align*}
\|f\|_{N^{p,q}}\leq C(n,\alpha,s,p,q)\|f\omega^{-1/q'}\|_{L^{p,q}}\leq C(n,\alpha,s,p,q)(M+\varepsilon).
\end{align*}
Since $\varepsilon$ is arbitrary, it follows that $\|f\|_{N^{p,q}}\leq C(n,\alpha,s,p,q)M$, which completes the proof for weak Fatou's property.

It remains to show that $\|f\|_{N^{p,q}}=0$ entails $f=0$ almost everywhere. For each $n\in\mathbb{N}$, choose an $\omega_{n}$ as in the definition of $N^{p,q}$ such that 
\begin{align*}
\|f\omega_{n}^{-1/q'}\|_{L^{p,q}}<2^{-n}.
\end{align*}
By defining $\omega$ as before, we obtain
\begin{align*}
\|f\omega^{-1/q'}\|_{L^{p,q}}&\leq\liminf_{n\rightarrow\infty}\left\|f\left(\frac{1}{n}\sum_{k=1}^{n}\omega_{k}\right)^{-1/q'}\right\|_{L^{p,q}}\\
&\leq\liminf_{n\rightarrow\infty}\left\|\frac{1}{n}\sum_{k=1}^{n}f\omega_{k}^{-1/q'}\right\|_{L^{p,q}}\\
&\leq C(n,\alpha,s,p,q)\liminf_{n\rightarrow\infty}\frac{1}{n}\sum_{k=1}^{n}\|f\omega_{k}^{-1/q'}\|_{L^{p,q}}\\
&\leq C(n,\alpha,s,p,q)\liminf_{n\rightarrow\infty}\frac{1}{n}\sum_{k=1}^{\infty}\|f\omega_{k}^{-1/q'}\|_{L^{p,q}}\\
&\leq C(n,\alpha,s,p,q)\liminf_{n\rightarrow\infty}\frac{1}{n}\sum_{k=1}^{\infty}\frac{1}{2^{k}}\\
&=0.
\end{align*}
Therefore, we have $f\omega^{-1/q'}=0$ almost everywhere. Since $\omega$ is a weight, we deduce that $f=0$ almost everywhere. For $1<q\leq p<\infty$, it is shown in (\ref{N block}) that $\left\|\cdot\right\|_{N^{p,q}}\approx\left\|\cdot\right\|$, while the space $N^{p,q}$ endowed with $\left\|\cdot\right\|$ is clearly complete by the almost everywhere absolutely convergent series defined in $\left\|\cdot\right\|$. We conclude that $N^{p,q}$ is a complete normable space, which finishes the proof.
\end{proof}

\begin{proof}[Proof of Proposition \ref{density}]
We first show the density of $C_{0}$ in $B^{p,q}$. Let $f\in B^{p,q}$ and $\varepsilon>0$ be given. Then there is a block decomposition of $f$ that 
\begin{align*}
f=\sum_{k}\lambda_{k}b_{k}
\end{align*}
and 
\begin{align*}
\sum_{k}|\lambda_{k}|<\|f\|_{B^{p,q}}+1<\infty
\end{align*}
as in the definition of $B^{p,q}$. As shown in the proof of Theorem \ref{second predual}, the block decomposition is almost everywhere absolutely convergent. Then
\begin{align*}
\left\|f-\sum_{|k|\leq n}\lambda_{k}b_{k}\right\|_{B^{p,q}}=\left\|\sum_{|k|>n}\lambda_{k}b_{k}\right\|_{B^{p,q}}\leq\sum_{|k|>n}|\lambda_{k}|\rightarrow 0,\quad n\rightarrow\infty.
\end{align*}
Then there is an $n\in\mathbb{N}$ such that 
\begin{align*}
\left\|f-\sum_{|k|\leq n}\lambda_{k}b_{k}\right\|_{B^{p,q}}<\varepsilon.
\end{align*}
Using Proposition \ref{lorentz dense}, for each $k=1,2,\ldots,n$, there is a $\varphi_{k}\in C_{0}$ such that 
\begin{align*}
\|b_{k}-\varphi_{k}\|_{L^{p,q}}<\left(1+\sum_{k}|\lambda_{k}|\right)^{-1}\varepsilon.
\end{align*}
Then the function $\varphi$ defined by
\begin{align*}
\varphi=\sum_{|k|\leq n}\lambda_{k}\varphi_{k}
\end{align*}
belongs to $C_{0}$ such that 
\begin{align*}
\|f-\varphi\|_{B^{p,q}}&\leq\left\|f-\sum_{|k|\leq n}\lambda_{k}b_{k}\right\|_{B^{p,q}}+\left\|\sum_{|k|\leq n}\lambda_{k}b_{k}-\varphi\right\|_{B^{p,q}}\\
&<\varepsilon+\sum_{|k|\leq n}|\lambda_{k}|\|f-\varphi_{k}\|_{B^{p,q}}\\
&\leq\varepsilon+\left(\sum_{|k|\leq n}|\lambda_{k}|\right)\left(1+\sum_{k}|\lambda_{k}|\right)^{-1}\varepsilon\\
&\leq 2\varepsilon,
\end{align*}
which justifies the density claim.

For the density of $C_{0}$ in $N^{p,q}$, we proceed as follows. Let $f\in N^{p,q}$ and $\varepsilon>0$. Then there is an $\omega$ as in the definition of $N^{p,q}$ such that 
\begin{align*}
\|f\omega^{-1/q'}\|_{L^{p,q}}<\|f\|_{N^{p,q}}+1<\infty.
\end{align*}
For each $n\in\mathbb{N}$, define $f_{n}=\min(f,n)\chi_{\{|x|\leq n\}}$. Then $f_{n}\uparrow f$ pointwise everywhere. We claim that 
\begin{align}\label{lebesgue}
\|f-f_{n}\|_{N^{p,q}}\rightarrow 0,\quad n\rightarrow\infty.
\end{align}
To this end, we may first assume that $|f|<\infty$ almost everywhere as $f\omega^{-1/q'}\in L^{p,q}$ and $\omega$ is a weight. Then $f-f_{n}\downarrow 0$ almost everywhere. For fixed $0<t<\infty$, we have 
\begin{align*}
|\{(f-f_{n})\omega^{-1/q'}>t\}|\downarrow 0
\end{align*}
by Lebesgue dominated convergence theorem as $0\leq(f-f_{n})\omega^{-1/q'}\leq f\omega^{-1/q'}$ and $|\{f\omega^{-1/q'}>t\}|<\infty$. As a consequence, using Lebesgue dominated convergence theorem again, one has 
\begin{align*}
\|(f-f_{n})\omega^{-1/q'}\|_{L^{p,q}}^{q}=p\int_{0}^{\infty}|\{(f-f_{n})\omega^{-1/q'}>t\}|^{\frac{q}{p}}t^{q}\frac{dt}{t}\rightarrow 0
\end{align*}
as $|\{(f-f_{n})\omega^{-1/q'}>t\}|^{q/p}\leq|\{f\omega^{-1/q'}>t\}|^{q/p}$ and $f\omega^{-1/q'}\in L^{p,q}$. Then 
\begin{align*}
\|f-f_{n}\|_{N^{p,q}}\leq\|(f-f_{n})\omega^{-1/q'}\|_{L^{p,q}}\rightarrow 0,\quad n\rightarrow\infty,
\end{align*}
which yields (\ref{lebesgue}). As a result, there is an $n\in\mathbb{N}$ such that $\|f-f_{n}\|_{N^{p,q}}<\varepsilon$. Using Proposition \ref{lorentz dense}, we can choose a $\varphi\in C_{0}$ such that $\|f_{n}-\varphi\|_{L^{p,q}}<\varepsilon$. Let $\omega$ be the nonlinear potential associated with the set $\{|x|\leq M\}$, where $M>0$ is such that $\{f_{n}\ne 0\}\cup\{\varphi\ne 0\}\subseteq\{|x|\leq M\}$. Then $\omega\geq\chi_{\{|x|\leq M\}}$ and hence 
\begin{align*}
\|f-\varphi\|_{N^{p,q}}&\leq C(n,\alpha,s,p,q)(\|f-f_{n}\|_{N^{p,q}}+\|f_{n}-\varphi\|_{N^{p,q}})\\
&<C(n,\alpha,s,p,q)(\varepsilon+\|(f_{n}-\varphi)\omega^{-1/q'}\|_{L^{p,q}})\\
&<C'(n,\alpha,s,p,q)\varepsilon,
\end{align*}
which finishes the proof.
\end{proof}

\begin{remark}
\rm One can easily modify the above proof to obtain the density of $C_{0}$ in $\mathcal{B}^{p,q}$ for $1<p<\infty$ and $0<q\leq 1$.  
\end{remark}

\begin{proof}[Proof of Theorem \ref{char M}]
Since $1<q\leq p<\infty$, we have $1<p'\leq q'<\infty$. Using (\ref{second embed}) that $N^{p',q'}\hookrightarrow B^{p',q'}$, $(L^{p',q'})'\approx L^{p,q}$, and the predual that $(B^{p',q'})^{\ast}\approx M^{p,q}$, for such weights $\omega$ in (\ref{M char}), we have
\begin{align*}
\|f\omega^{\frac{1}{q}}\|_{L^{p,q}}&\leq C(p,q)\|f\omega^{\frac{1}{q}}\|_{(L^{p',q'})'}\\
&=C(p,q)\sup\left\{\int_{\mathcal{R}}|f(x)\omega(x)^{\frac{1}{q}}g(x)|dx:\|g\|_{L^{p',q'}}\leq 1\right\}\\
&\leq C(n,\alpha,s,p,q)\sup\{\|f\|_{M^{p,q}}\|g\omega^{\frac{1}{q}}\|_{B^{p',q'}}:\|g\|_{L^{p',q'}}\leq 1\}\\
&\leq C'(n,\alpha,s,p,q)\sup\{\|f\|_{M^{p,q}}\|g\omega^{\frac{1}{q}}\|_{N^{p',q'}}:\|g\|_{L^{p',q'}}\leq 1\}\\
&\leq C'(n,\alpha,s,p,q)\sup\{\|f\|_{M^{p,q}}\|g\omega^{\frac{1}{q}}\omega^{-\frac{1}{q}}\|_{L^{p',q'}}:\|g\|_{L^{p',q'}}\leq 1\}\\
&\leq C'(n,\alpha,s,p,q)\|f\|_{M^{p,q}}.
\end{align*}
Taking supremum over all such $\omega$, we obtain 
\begin{align*}
M\leq C(n,\alpha,s,p,q)\|f\|_{M^{p,q}}.
\end{align*}

For the other direction of the estimate, we let $K\subseteq\mathcal{R}$ be a compact set. Consider the nonlinear potential $V^{K}$ with the admissible exponent $\delta$. Then $(V^{K})^{\delta}\geq\chi_{K}$, $(V^{K})^{\delta}/\mathcal{C}(K)\in L^{1}(\mathcal{C})\cap A_{1}^{\rm loc}$, $[(V^{K})^{\delta}/\mathcal{C}(K)]_{A_{1}^{\rm loc}}\leq\mathfrak{c}$, and 
\begin{align*}
\|(V^{K})^{\delta}/\mathcal{C}(K)\|_{L^{1}(\mathcal{C})}\leq C(n,\alpha,s).
\end{align*}
Subsequently, we have 
\begin{align*}
\frac{\|f\chi_{K}\|_{L^{p,q}}}{\mathcal{C}(K)^{1/q}}=\left\|f\left[\frac{\chi_{K}}{\mathcal{C}(K)}\right]^{\frac{1}{q}}\right\|_{L^{p,q}}\leq\left\|f\left[\frac{(V^{K})^{\delta}}{\mathcal{C}(K)}\right]^{\frac{1}{q}}\right\|_{L^{p,q}}\leq C(n,\alpha,s,p,q)M.
\end{align*}
Taking supremum over all such $K$, we obtain the result.
\end{proof}

\begin{proof}[Proof of Theorem \ref{dual N}]
We first show (\ref{first predual N}) for $1<p<\infty$ and $1<q<\infty$. To this end, let $\mathcal{L}\in(N^{p',q'})^{\ast}$ be given. If $g$ is a nonzero function in $L^{p',q'}$ with $\{g\ne 0\}\subseteq E$ for some bounded set with $\mathcal{C}(E)>0$, then by letting $\omega$ the nonlinear potential associated to $E$, one has
\begin{align*}
\|g\|_{N^{p',q'}}\leq\|g\omega^{-1/q}\|_{L^{p',q'}}\leq C(n,\alpha,s,p,q)\mathcal{C}(E)^{\frac{1}{q}}\|g\|_{L^{p',q'}},
\end{align*}
and hence 
\begin{align*}
|\mathcal{L}(g)|\leq C(n,\alpha,s,p,q)\|\mathcal{L}\|\mathcal{C}(E)^{\frac{1}{q}}\|g\|_{L^{p',q'}}.
\end{align*}
As in the proof of Theorem \ref{second predual}, there is a locally integrable function $f$ on $\mathcal{R}$ with 
\begin{align}\label{second to use}
\mathcal{L}(g)=\int_{\mathcal{R}}f(x)g(x)dx
\end{align}
holds for any $g\in L^{p',q'}$ with $\{g\ne 0\}\subseteq E_{N}$. As shown in the proof therein, the function $f$ satisfies
\begin{align*}
\|f\chi_{K}\chi_{|f|\leq N}\|_{L^{p,q}}\leq C(n,\alpha,s,p,q)\|\mathcal{L}\|\mathcal{C}(K)^{\frac{1}{q}},\quad N=1,2,\ldots,
\end{align*}
and hence
\begin{align*}
\|f\|_{M^{p,q}}\leq C(n,\alpha,s,p,q)\|\mathcal{L}\|.
\end{align*}
To extend the validity of (\ref{second to use}) to all $g\in N^{p',q'}$, we consider the functions $g_{n}$ defined by
\begin{align*}
g_{n}=\min(g,n)\chi_{|x|\leq n},\quad n\in\mathbb{N}.
\end{align*}
As shown in (\ref{M char}), the sequence $\{g_{n}\}_{n=1}^{\infty}$ converges to $g$ in $N^{p',q'}$ as $n\rightarrow\infty$. On the other hand, we have 
\begin{align*}
\int_{\mathcal{R}}|f(x)g_{n}(x)|dx&\leq C(n,\alpha,s,p,q)\|f\|_{M^{p,q}}\|g_{n}\|_{N^{p',q'}}\\
&\leq C(n,\alpha,s,p,q)\|f\|_{M^{p,q}}\|g\|_{N^{p',q'}}.
\end{align*}
Taking $n\rightarrow\infty$ and using monotone convergence theorem, one has
\begin{align*}
\int_{\mathcal{R}}|f(x)g(x)|dx\leq C(n,\alpha,s,p,q)\|f\|_{M^{p,q}}\|g\|_{N^{p',q'}}<\infty,
\end{align*}
which also shows that $fg\in L^{1}(\mathcal{R})$. As a consequence, using Lebesgue dominated convergence theorem and the continuity of $\mathcal{L}$, we have
\begin{align*}
\mathcal{L}(g)=\lim_{n\rightarrow\infty}\mathcal{L}(g_{n})=\int_{\mathcal{R}}f(x)g_{n}(x)dx=\int_{\mathcal{R}}f(x)g(x)dx,
\end{align*}
which yields (\ref{second to use}) for general $g\in N^{p',q'}$. We conclude that $(N^{p',q'})^{\ast}\hookrightarrow M^{p,q}$ and also $(N^{p',q'})'\hookrightarrow M^{p,q}$, hence (\ref{first predual N}) follows.

Now we prove (\ref{predual}) for $1<q\leq p<\infty$. If suffices to show for $M^{p,q}\hookrightarrow(N^{p',q'})^{\ast}$, since combining this with (\ref{first predual N}) would yield $(N^{p',q'})^{\ast}\approx M^{p,q}$ and  $M^{p,q}\hookrightarrow(N^{p',q'})'$. Let $f\in M^{p,q}$ and $g\in N^{p',q'}$ be given. Since $1<p'\leq q'<\infty$, using (\ref{second embed}) in Theorem \ref{embed}, one has 
\begin{align*}
|\mathcal{L}_{f}(g)|&\leq\int_{\mathcal{R}}|f(x)g(x)|dx\\
&\leq C(n,\alpha,s,p,q)\|f\|_{M^{p,q}}\|g\|_{B^{p',q'}}\\
&\leq C'(n,\alpha,s,p,q)\|f\|_{M^{p,q}}\|g\|_{N^{p',q'}},
\end{align*}
which shows that $\mathcal{L}_{f}\in(N^{p',q'})^{\ast}$. Thus $M^{p,q}\hookrightarrow(N^{p',q'})^{\ast}$ follows. 

On the other hand, using Theorem \ref{second predual}, we have $(B^{p',q'})''\approx(M^{p,q})'$. Then the chain (\ref{chain}) follows by (\ref{second embed}) and the fact that $B^{p',q'}\hookrightarrow(B^{p',q'})''$, which completes the proof.
\end{proof}

\begin{proof}[Proof of Theorem \ref{maximal}]
We first justify the boundedness of ${\bf M}^{\rm loc}$ on $M^{p,q}$. Since $A_{1}^{\rm loc}\subseteq A_{q}^{\rm loc}$ (see the definition of $A_{q}^{\rm loc}$ in \cite[Section 2]{OK4}), for any weight $\omega$ as in (\ref{M char}) and $f\in\mathcal{M}(\mathcal{R})$, we have 
\begin{align*}
\|({\bf M}^{\rm loc}f)\omega^{\frac{1}{q}}\|_{L^{q}}&=\|({\bf M}^{\rm loc}f)^{q}\omega\|_{L^{1}}^{1/q}\\
&\leq[C(n,q,[\omega]_{A_{q}^{\rm loc}})\||f|^{q}\omega\|_{L^{1}}]^{1/q}\\
&\leq C(n,\alpha,s,q)\|f\omega^{\frac{1}{q}}\|_{L^{q}},
\end{align*}
as $[\omega]_{A_{q}^{\rm loc}}\leq[\omega]_{A_{1}^{\rm loc}}\leq\mathfrak{c}=C(n,\alpha,s)$ and $C(n,q,\cdot)$ is an increasing function (see \cite[Theorem 2.3.7]{OK4}). On the other hand, since $\omega\in A_{1}^{\rm loc}$ and $[\omega]_{A_{1}^{\rm loc}}\leq\mathfrak{c}$, we have $\omega^{1+\gamma}\in A_{1}^{\rm loc}$, where $\gamma$ can be chosen depending only on $\mathfrak{c}$, which in turn depends on $n,\alpha,s$ (see \cite[Proof of Theorem 2.4.2 and Remark 2.4.3]{OK4}). Then
\begin{align*}
\|({\bf M}^{\rm loc}f)\omega^{\frac{1}{q}}\|_{L^{(1+\gamma)q}}&=\|({\bf M}^{\rm loc}f)^{(1+\gamma)q}\omega^{1+\gamma}\|_{L^{1}}^{1/(1+\gamma)q}\\
&\leq C(n,\alpha,s,q)\||f|^{(1+\gamma)q}\omega^{1+\gamma}\|_{L^{1}}^{1/(1+\gamma)q}\\
&=C(n,\alpha,s,q)\|f\omega^{\frac{1}{q}}\|_{L^{(1+\gamma)q}}.
\end{align*}
Subsequently, if we let 
\begin{align*}
Tf=({\bf M}^{\rm loc}(f\omega^{-\frac{1}{q}}))\omega^{\frac{1}{q}},\quad f\in\mathcal{M}(\mathcal{R}), 
\end{align*}
then we have the boundedness that 
\begin{align*}
&\|Tf\|_{L^{q}}\leq C(n,\alpha,s,q)\|f\|_{L^{q}},\\
&\|Tf\|_{L^{(1+\gamma)q}}\leq C(n,\alpha,s,q)\|f\|_{L^{(1+\gamma)q}}.
\end{align*}
Assume that $q\leq p<(1+\gamma)q$. Using the Marcinkiewicz interpolation theorem of Lorentz type (see \cite{HR} or \cite[Theorem 4.13]{BS}), we have 
\begin{align*}
\|Tf\|_{L^{p,q}}\leq C(n,\alpha,s,p,q)\|f\|_{L^{p,q}}.
\end{align*}
Replacing $f$ in the above estimate with $f\omega^{1/q}$, we obtain
\begin{align*}
\|({\bf M}^{\rm loc}f)\omega^{\frac{1}{q}}\|_{L^{p,q}}\leq C(n,\alpha,s,p,q)\|f\omega^{\frac{1}{q}}\|_{L^{p,q}}.
\end{align*}
Taking supremum over all such $\omega$, the estimate (\ref{boundedness 1}) follows by Theorem \ref{M char}.

It remains to prove the boundedness of ${\bf M}^{\rm loc}$ on $N^{p,q}$. Let $f\in\mathcal{M}(\mathcal{R})$ and $\omega$ be an arbitrary weight as in the definition of $\left\|\cdot\right\|_{N^{p,q}}$. Note that $\omega\in A_{1}^{\rm loc}\subseteq A_{q'}^{\rm loc}$ entails $\omega^{1-q}\in A_{q}^{\rm loc}$ with 
\begin{align*}
[\omega^{1-q}]_{A_{q}^{\rm loc}}=[\omega]_{A_{q'}^{\rm loc}}^{q-1}\leq[\omega]_{A_{1}^{\rm loc}}^{q-1}\leq\mathfrak{c}^{q-1}.
\end{align*}
Then
\begin{align*}
\|({\bf M}^{\rm loc}f)\omega^{-1/q'}\|_{L^{q}}&=\|({\bf M}f)^{q}\omega^{1-q}\|_{L^{1}}^{1/q}\\
&\leq[C(n,\alpha,s,[\omega]_{A_{q}^{\rm loc}})\||f|^{q}\omega^{1-q}\|_{L^{1}}]^{1/q}\\
&\leq C(n,\alpha,s,q)\|f\omega^{-1/q'}\|_{L^{q}}.
\end{align*}
On the other hand, since $\overline{\omega}=\omega^{1-q}\in A_{q}^{\rm loc}$ satisfies $[\overline{\omega}]_{A_{q}^{\rm loc}}\leq\mathfrak{c}^{q-1}$, we have 
\begin{align*}
[\overline{\omega}]_{A_{r}^{\rm loc}}\leq C(n,\alpha,s,q)[\overline{\omega}]_{A_{q}}\leq C'(n,\alpha,s,q)
\end{align*}
with $r=(q+\overline{\gamma})/(1+\overline{\gamma})$ for some $\overline{\gamma}$ depending only on $n,\alpha,s$ (see the proof of \cite[Corollary 2.4.6]{OK4}). As $r/q<1$, we have $\overline{\omega}^{r/q}\in A_{r}^{\rm loc}$ with
\begin{align*}
[\overline{\omega}^{r/q}]_{A_{r}^{\rm loc}}\leq[\overline{\omega}]_{A_{r}^{\rm loc}}^{r/q}\leq C(n,\alpha,s,q)
\end{align*}
(see \cite[Lemma 2.1.4]{OK4}). Then
\begin{align*}
\|({\bf M}^{\rm loc}f)\omega^{-1/q'}\|_{L^{r}}&=\|({\bf M}^{\rm loc}f)^{r}\omega^{-r/q'}\|_{L^{1}}^{1/r}\\
&=\|({\bf M}^{\rm loc}f)^{r}\overline{\omega}^{r/q}\|_{L^{1}}^{1/r}\\
&\leq[C(n,\alpha,s,[\overline{\omega}^{r/q}]_{A_{r}^{\rm loc}})\||f|^{r}\overline{\omega}^{r/q}\|_{L^{1}}]^{1/r}\\
&\leq C(n,\alpha,s,q)\|f\overline{\omega}^{1/q}\|_{L^{r}}\\
&=C(n,\alpha,s,q)\|f\omega^{-1/q'}\|_{L^{r}}.
\end{align*}
Therefore, by defining the operator $T$ that 
\begin{align*}
Tf=[{\bf M}^{\rm loc}(f\omega^{1/q'})]\omega^{-1/q'},
\end{align*}
we have
\begin{align*}
\|Tf\|_{L^{q}}&\leq C(n,\alpha,s,q)\|f\|_{L^{q}},\\
\|Tf\|_{L^{r}}&\leq C(n,\alpha,s,q)\|f\|_{L^{r}}.
\end{align*}
If $1<r<p\leq q$, then using the same interpolation theorem again, we obtain
\begin{align*}
\|Tf\|_{L^{p,q}}\leq C(n,\alpha,s,p,q)\|f\|_{L^{p,q}}.
\end{align*}
By replacing $f$ with $f\omega^{-1/q'}$, the estimate (\ref{boundedness 2}) follows by the definition of $\left\|\cdot\right\|_{N^{p,q}}$. The proof is now complete.
\end{proof}

\begin{proof}[Proof of Theorem \ref{weak dual}]
The proof is very similar to that of Theorem \ref{second predual}, and we will follow the notations therein. With such a block decomposition of $g$ and the canonical functional $\mathcal{L}_{f}$, we have
\begin{align*}
|\mathcal{L}_{f}(g)|&\leq C(p,q)\sum_{k}|\lambda_{k}|\|f\chi_{E_{k}}\|_{L^{p,\infty}}\|b_{k}\|_{L^{p',q}}\\
&\leq C(p,q)\|f\|_{\mathcal{M}^{p,\infty}}\sum_{k}|\lambda_{k}|\mathcal{C}(E_{k})^{\frac{1}{p}}\|b_{k}\|_{L^{p',q}}\\
&\leq C(p,q)\|f\|_{\mathcal{M}^{p,\infty}}\sum_{k}|\lambda_{k}|,
\end{align*}
this gives $\mathcal{M}^{p,\infty}\hookrightarrow(\mathcal{B}^{p',q})^{\ast}$. Conversely, given $\mathcal{L}\in(\mathcal{B}^{p',q})^{\ast}$, we first notice that  
\begin{align*}
|\mathcal{L}(g)|\leq\|\mathcal{L}\|\mathcal{C}(E)^{\frac{1}{p}}\|g\|_{L^{p',q}},
\end{align*}
which leads to 
\begin{align}\label{4.2 prime}
\mathcal{L}(g)=\int_{\mathcal{R}}f(x)g(x)dx
\end{align}
holds for any $g\in L^{p',q}$ with $\{g\ne 0\}\subseteq E_{N}$. Subsequently, we can infer from (\ref{4.2 prime}) that $f$ is locally integrable on $\mathcal{R}$. Repeating the proof of \cite[Theorem 1.4.16 (v)]{GL}, one can show that the function $f$ satisfies 
\begin{align*}
\|f\chi_{K}\|_{L^{p,\infty}}\leq C(p,q)\|\mathcal{L}\|\mathcal{C}(K)^{\frac{1}{p}},
\end{align*}
which gives 
\begin{align*}
\|f\|_{\mathcal{M}^{p,\infty}}\leq C(p,q)\|\mathcal{L}\|.
\end{align*}
As before, (\ref{4.2 prime}) can be extended to all $g\in\mathcal{B}^{p',q}$, then  $(\mathcal{B}^{p',q})^{\ast}\hookrightarrow\mathcal{M}^{p,\infty}$. The proof is now complete.
\end{proof}

In the sequel, if $P(\cdot)$ is a property defined on $\mathcal{R}$, then we say that $P(\cdot)$ holds $\mathcal{C}$-quasi-everywhere provided that 
\begin{align*}
\mathcal{C}(\{x\in\mathcal{R}:P(x)~\text{is false}\})=0.
\end{align*}

\begin{lemma}\label{quasi-normed-everywhere}
Let $0<q\leq 1$. Assume that $\{f_{n}\}_{n=1}^{\infty}$ is a convergent sequence in $L^{1,q}(\mathcal{C})$, say, $f_{n}\rightarrow f$ in $L^{1,q}(\mathcal{C})$. Then there is a subsequence $\{f_{n_{k}}\}_{k=1}^{\infty}$ of $\{f_{n}\}_{n=1}^{\infty}$ converges to $f$ $\mathcal{C}$-quasi-everywhere. 
\end{lemma}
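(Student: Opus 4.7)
The plan is to prove a capacitary analog of the classical fact that $L^{p}$-convergence passes to pointwise convergence along a subsequence. The two essential ingredients are a Chebyshev-type weak estimate for $L^{1,q}(\mathcal{C})$ and the countable subadditivity of the Bessel capacity $\mathcal{C}$.

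First I would establish the weak inequality
\begin{align*}
\mathcal{C}(\{|g|>t_{0}\})\cdot t_{0}\leq q^{1/q}\|g\|_{L^{1,q}(\mathcal{C})},\quad t_{0}>0.
\end{align*}
Since $t\mapsto\mathcal{C}(\{|g|>t\})$ is non-increasing, the definition of $\|\cdot\|_{L^{1,q}(\mathcal{C})}$ gives
\begin{align*}
\|g\|_{L^{1,q}(\mathcal{C})}^{q}\geq\int_{0}^{t_{0}}\mathcal{C}(\{|g|>t\})^{q}t^{q-1}\,dt\geq\mathcal{C}(\{|g|>t_{0}\})^{q}\cdot\frac{t_{0}^{q}}{q},
\end{align*}
which yields the claim.

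Next I would use the norm convergence $f_{n}\to f$ in $L^{1,q}(\mathcal{C})$ to extract a subsequence $\{f_{n_{k}}\}_{k=1}^{\infty}$ satisfying $\|f_{n_{k}}-f\|_{L^{1,q}(\mathcal{C})}\leq 4^{-k}$. Applying the above estimate with $g=f_{n_{k}}-f$ and $t_{0}=2^{-k}$, the sets $F_{k}:=\{|f_{n_{k}}-f|>2^{-k}\}$ satisfy
\begin{align*}
\mathcal{C}(F_{k})\leq q^{1/q}\cdot 2^{k}\cdot 4^{-k}=q^{1/q}\cdot 2^{-k},
\end{align*}
so that $\sum_{k}\mathcal{C}(F_{k})<\infty$.

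Finally, I run the Borel--Cantelli argument in the capacity setting. Setting $E:=\bigcap_{N\geq 1}\bigcup_{k\geq N}F_{k}$, countable subadditivity of $\mathcal{C}$ gives
\begin{align*}
\mathcal{C}(E)\leq\mathcal{C}\!\left(\bigcup_{k\geq N}F_{k}\right)\leq\sum_{k\geq N}\mathcal{C}(F_{k})\longrightarrow 0\quad\text{as }N\to\infty,
\end{align*}
so $\mathcal{C}(E)=0$. For every $x\notin E$, there exists $N(x)$ with $|f_{n_{k}}(x)-f(x)|\leq 2^{-k}$ for all $k\geq N(x)$, hence $f_{n_{k}}(x)\to f(x)$. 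This is the desired $\mathcal{C}$-quasi-everywhere convergence of $\{f_{n_{k}}\}$ to $f$. The main (and only) obstacle is the verification of the Chebyshev step; once that is in hand, the remainder is a direct transcription of the classical subsequence argument, and the hypothesis $0<q\leq 1$ enters only through the very definition of $L^{1,q}(\mathcal{C})$.
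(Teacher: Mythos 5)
Your proof is correct, and it follows a genuinely different and in fact more economical route than the paper. The paper first establishes the embedding $L^{1,q}(\mathcal{C})\hookrightarrow L^{1}(\mathcal{C})$ (via the same weak-type estimate $\|f\|_{L^{1,\infty}(\mathcal{C})}\leq q^{1/q}\|f\|_{L^{1,q}(\mathcal{C})}$ you derive, followed by an interpolation-style manipulation), then extracts a fast Cauchy subsequence in $L^{1}(\mathcal{C})$, forms the absolute telescoping sum $g=\sum_{k}|f_{n_{k+1}}-f_{n_{k}}|$, invokes the \emph{normability} of $L^{1}(\mathcal{C})$ to conclude $g\in L^{1}(\mathcal{C})$ so that $g<\infty$ quasi-everywhere, and finally invokes Fatou's lemma to identify the quasi-everywhere limit with $f$. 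Your argument dispenses with all of that: the Chebyshev-type bound $\mathcal{C}(\{|g|>t_{0}\})\,t_{0}\leq q^{1/q}\|g\|_{L^{1,q}(\mathcal{C})}$ together with monotonicity and countable subadditivity of $\mathcal{C}$ suffices for a direct Borel--Cantelli argument, and because you compare $f_{n_{k}}$ with $f$ directly (rather than with $f_{n_{k+1}}$), there is no need for a Fatou step to identify the limit. The trade-off is that the paper's version also delivers $L^{1,q}(\mathcal{C})\hookrightarrow L^{1}(\mathcal{C})$ as a by-product, and this embedding is reused later in the proof of Theorem 1.9; your proof of the lemma itself is cleaner but does not furnish that auxiliary fact.
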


\begin{proof}
We first prove that $L^{1,q}(\mathcal{C})\hookrightarrow L^{1}(\mathcal{C})$. Fix a $0<t<\infty$. Then
\begin{align*}
t\mathcal{C}(\{|f|>t\})&=\left(q\int_{0}^{t}\mathcal{C}(\{|f|>t\})^{q}s^{q}\frac{ds}{s}\right)^{\frac{1}{q}}\\
&\leq\left(q\int_{0}^{t}\mathcal{C}(\{|f|>s\})^{q}s^{q}\frac{ds}{s}\right)^{\frac{1}{q}}\\
&\leq\left(q\int_{0}^{\infty}\mathcal{C}(\{|f|>s\})^{q}s^{q}\frac{ds}{s}\right)^{\frac{1}{q}}\\
&=q^{\frac{1}{q}}\|f\|_{L^{1,q}(\mathcal{C})}.
\end{align*}
Taking supremum over all $0<t<\infty$, we have 
\begin{align*}
\|f\|_{L^{1,\infty}(\mathcal{C})}\leq q^{\frac{1}{q}}\|f\|_{L^{1,q}(\mathcal{C})},
\end{align*}
and hence 
\begin{align}
\|f\|_{L^{1}(\mathcal{C})}&=\int_{0}^{\infty}\mathcal{C}(\{|f|>t\})dt\notag\\
&=\int_{0}^{\infty}[t\mathcal{C}(\{|f|>t\})]^{1-q+q}\frac{dt}{t}\notag\\
&\leq\|f\|_{L^{1,\infty}(\mathcal{C})}^{1-q}\int_{0}^{\infty}\mathcal{C}(\{|f|>t\})^{q}t^{q}\frac{dt}{t}\notag\\
&\leq q^{\frac{1-q}{q}}\|f\|_{L^{1,q}(\mathcal{C})}.\label{use embedding}
\end{align}
As a result, we obtain $f_{n}\rightarrow f$ in $L^{1}(\mathcal{C})$. For each $k=1,2,\ldots$, choose an $n_{k}\in\mathbb{N}$ such that 
\begin{align*}
\|f_{n_{k+1}}-f_{n_{k}}\|_{L^{1}(\mathcal{C})}<\frac{1}{2^{k}}
\end{align*}
with $n_{1}<n_{2}<\cdots$. Then
\begin{align*}
\sum_{k=1}^{\infty}\|f_{n_{k+1}}-f_{n_{k}}\|_{L^{1}(\mathcal{C})}\leq\sum_{k=1}^{\infty}\frac{1}{2^{k}}<\infty.
\end{align*}
Let
\begin{align*}
g=\sum_{k=1}^{\infty}|f_{n_{k+1}}-f_{n_{k}}|.
\end{align*}
Since $L^{1}(\mathcal{C})$ is normable, there exists a positive constant $\kappa=C(n,\alpha,s)$ such that 
\begin{align*}
\|g\|_{L^{1}(\mathcal{C})}=\sup_{n\in\mathbb{N}}\left\|\sum_{k=1}^{n}|f_{n_{k+1}}-f_{n_{k}}|\right\|_{L^{1}(\mathcal{C})}\leq\kappa\cdot\sup_{n\in\mathbb{N}}\sum_{k=1}^{n}\|f_{n_{k+1}}-f_{n_{k}}\|_{L^{1}(\mathcal{C})}<\infty.
\end{align*}
We deduce that $g\in L^{1}(\mathcal{C})$ and hence $|g(x)|<\infty$ $\mathcal{C}$-quasi-everywhere. This gives
\begin{align*}
\varphi(x)=\sum_{k=1}^{\infty}(f_{n_{k+1}}-f_{n_{k}})(x)=\lim_{k\rightarrow\infty}f_{n_{k}}(x)-f_{n_{1}}(x)
\end{align*}
exists as a real number $\mathcal{C}$-quasi-everywhere. Using Fatou's lemma, one has 
\begin{align*}
\|\varphi+f_{n_{1}}-f\|_{L^{1}(\mathcal{C})}\leq\liminf_{k\rightarrow\infty}\|f_{n_{k}}-f\|_{L^{1}(\mathcal{C})}=0.
\end{align*}
We conclude that $f(x)=\varphi(x)+f_{n_{1}}(x)=\lim\limits_{k\rightarrow\infty}f_{n_{k}}(x)$ $\mathcal{C}$-quasi-everywhere, which completes the proof.
\end{proof}

\begin{proof}[Proof of Theorem \ref{pre theorem}]
Denote by
\begin{align*}
M=\inf\{a>0: |\mu|^{\ast}(E)\leq a\cdot\mathcal{C}(E),~E\subseteq\mathcal{R}\}
\end{align*}
for simplicity. Assume at the moment that (\ref{1.2}) holds. Let $\mathcal{L}$ be a continuous linear functional on $\mathcal{L}^{1,q}(\mathcal{C})$. For any compact set $K\subseteq\mathcal{R}$, $\varphi\in C_{0}$, ${\rm supp}(\varphi)\subseteq K$, we have 
\begin{align*}
|\mathcal{L}(\varphi)|&\leq\|\mathcal{L}\|\|\varphi\|_{L^{1,q}(\mathcal{C})}\\
&\leq\|\mathcal{L}\|\left\|\left(\sup_{x\in\mathcal{R}}|\varphi(x)|\right)\chi_{K}\right\|_{L^{1,q}(\mathcal{C})}\\
&=\left(\frac{1}{q}\right)^{\frac{1}{q}}\mathcal{C}(K)\|\mathcal{L}\|\left(\sup_{x\in\mathcal{R}}|\varphi(x)|\right).
\end{align*}
We deduce that $\mathcal{L}$ is a measure on $\mathcal{R}$, say,
\begin{align*}
\mathcal{L}(\varphi)=\int_{\mathcal{R}}\varphi d\mu,\quad\varphi\in C_{0}.
\end{align*}
The total variation $|\mu|$ of $\mu$ satisfies 
\begin{align*}
|\mu|(\varphi)=\sup_{\substack{\psi\in C_{0}\\|\psi|\leq\varphi}}|\mu(\psi)|\leq\left(\frac{1}{q}\right)^{\frac{1}{q}}\mathcal{C}(K)\|\mathcal{L}\|\left(\sup_{x\in\mathcal{R}}|\varphi(x)|\right)
\end{align*}
for all $0\leq\varphi\in C_{0}$ with ${\rm supp}(\varphi)\subseteq K$. For any set $E\subseteq\mathcal{R}$, if $G\supseteq E$ is open, $\varphi\in C_{0}$, and $0\leq\varphi\leq\chi_{G}$, then
\begin{align*}
|\mu|(\varphi)\leq\left(\frac{1}{q}\right)^{\frac{1}{q}}\mathcal{C}(G)\|\mathcal{L}\|\left(\sup_{x\in\mathcal{R}}|\varphi(x)|\right)\leq\left(\frac{1}{q}\right)^{\frac{1}{q}}\mathcal{C}(G)\|\mathcal{L}\|.
\end{align*}
We obtain 
\begin{align*}
|\mu|(G)=\sup_{\substack{\varphi\in C_{0}\\0\leq\varphi\leq\chi_{G}}}|\mu(\varphi)|\leq\left(\frac{1}{q}\right)^{\frac{1}{q}}\mathcal{C}(G)\|\mathcal{L}\|,
\end{align*}
and hence 
\begin{align*}
|\mu|^{\ast}(E)=\inf_{\substack{E\subseteq G\subseteq\mathcal{R}\\ G~\text{open}}}|\mu|(G)\leq\left(\frac{1}{q}\right)^{\frac{1}{q}}\mathcal{C}(G)\|\mathcal{L}\|.
\end{align*}
Since $\mathcal{C}$ is outer regular (see \cite[Proposition 2.3.5]{AH}), this yields
\begin{align}\label{3.7}
|\mu|^{\ast}(E)\leq\left(\frac{1}{q}\right)^{\frac{1}{q}}\mathcal{C}(E)\|\mathcal{L}\|.
\end{align}
In particular, it holds that
\begin{align}\label{3.8}
\mathcal{C}(E)=0\quad\text{entails}\quad|\mu|^{\ast}(E)=0.
\end{align}
We also deduce by (\ref{3.7}) that
\begin{align}\label{second to last}
M\leq\|\mathcal{L}\|.
\end{align}
Combining (\ref{1.2}) with the above inequality, we obtain $(\mathcal{L}^{1,q})^{\ast}\hookrightarrow\mathfrak{M}$.

On the other hand, let $\mu$ be a measure on $\mathcal{R}$ that satisfying (\ref{1.2}) and denote by $\mathcal{L}_{\mu}$ the canonical linear functional that 
\begin{align*}
\mathcal{L}_{\mu}(\varphi)=\int_{\mathcal{R}}\varphi d\mu,\quad\varphi\in C_{0}. 
\end{align*}
If $\varphi\in C_{0}$, then 
\begin{align*}
|\mathcal{L}_{\mu}(\varphi)|&\leq\int_{\mathcal{R}}|\varphi|d|\mu|\\
&=\int_{0}^{\infty}\mu^{\ast}(\{|\varphi|>t\})dt\\
&\leq\int_{0}^{\infty}M\cdot\mathcal{C}(\{|\varphi|>t\})dt\\
&=M\|\varphi\|_{L^{1}(\mathcal{C})}\\
&\leq q^{\frac{1-q}{q}}M\|\varphi\|_{L^{1,q}(\mathcal{C})},
\end{align*}
where we have used (\ref{use embedding}) in the last inequality. Now we consider an arbitrary $f\in\mathcal{L}^{1,q}(\mathcal{C})$. There is a sequence $\{\varphi_{n}\}_{n=1}^{\infty}\subseteq C_{0}$ such that $\|\varphi_{n}-f\|_{L^{1,q}(\mathcal{C})}\rightarrow 0$ and $\varphi_{n}\rightarrow\varphi$ $\mathcal{C}$-quasi-everywhere (see Lemma \ref{quasi-normed-everywhere}). The above estimates show that
\begin{align*}
\int_{\mathcal{R}}|\varphi_{n}-\varphi_{m}|d|\mu|\leq q^{\frac{1-q}{q}}M\|\varphi_{n}-\varphi_{m}\|_{L^{1,q}(\mathcal{C})},\quad n,m\in\mathbb{N}.
\end{align*}
Hence $\varphi_{n}\rightarrow g$ in $L^{1}(|\mu|)$ for a $|\mu|$-integrable function $g$ on $\mathcal{R}$. We may assume that $\varphi_{n}\rightarrow g$ $\mu$-almost-everywhere. On the other hand, (\ref{3.8}) entails $\varphi_{n}\rightarrow f$ $|\mu|$-almost-everywhere, and hence $f=g$ $|\mu|$-almost-everywhere. This implies $f$ is $|\mu|$-measurable, $|\mu|$-integrable, and 
\begin{align*}
\left|\int_{\mathcal{R}}f d\mu\right|&=\left|\int_{\mathcal{R}}g d\mu\right|\\
&\leq\lim_{n\rightarrow\infty}\int_{\mathcal{R}}|\varphi_{n}| d|\mu|\\
&\leq q^{\frac{1-q}{q}}M\lim_{n\rightarrow\infty}\|\varphi_{n}\|_{L^{1,q}(\mathcal{C})}\\
&\leq q^{\frac{1-q}{q}}M\lim_{n\rightarrow\infty}\kappa_{0}(\|\varphi_{n}-f\|_{L^{1,q}(\mathcal{C})}+\|f\|_{L^{1,q}(\mathcal{C})})\\
&=q^{\frac{1-q}{q}}M\kappa_{0}\|f\|_{L^{1,q}(\mathcal{C})},
\end{align*}
where $\kappa_{0}=C(q)$. We deduce that
\begin{align*}
\mathcal{L}_{\mu}(\varphi)=\int_{\mathcal{R}}\varphi d\mu,\quad\varphi\in\mathcal{L}^{1,q}(\mathcal{C}),
\end{align*} 
and hence
\begin{align}\label{last}
\|\mathcal{L}_{\mu}\|\leq\kappa_{0}q^{\frac{1-q}{q}}M,
\end{align}
which shows that $\mathfrak{M}\hookrightarrow(\mathcal{L}^{1,q})^{\ast}$ in view of (\ref{1.2}) and hence $(\mathcal{L}^{1,q})^{\ast}\approx\mathfrak{M}$.

It remains to justify the claim (\ref{1.2}). Assuming that $M<\infty$ and $\varepsilon>0$, then there is an $0<a<M+\varepsilon$ such that $|\mu|^{\ast}(E)\leq a\cdot\mathcal{C}(E)$ for arbitrary set $E\subseteq\mathcal{R}$. For compact set $K\subseteq\mathcal{R}$ with $\mathcal{C}(K)>0$, we have both $\mathcal{C}(K)<\infty$ and $|\mu|^{\ast}(K)=|\mu|(K)<\infty$, whence 
\begin{align*}
\frac{|\mu|(K)}{\mathcal{C}(K)}\leq a\leq M+\varepsilon.
\end{align*}
Since $\varepsilon$ and $K$ are arbitrary, we obtain $\|\mu\|_{\mathfrak{M}}\leq M$. 

On the other hand, let $M>0$, $\|\mu\|_{\mathfrak{M}}<\infty$, and $\varphi\in C_{0}$. For fixed $0<t<\infty$, $\{|\varphi|\geq t\}\subseteq\mathcal{R}$ is compact. If $\mathcal{C}(\{|\varphi|\geq t\})=0$, then (\ref{3.8}) gives $|\mu|^{\ast}(\{|\varphi|\geq t\})=0$ and certainly
\begin{align*}
|\mu|^{\ast}(\{|\varphi|\geq t\})\leq\|\mu\|_{\mathfrak{M}}\cdot\mathcal{C}(\{|\varphi|\geq t\}).
\end{align*}
By the definition of $\|\mu\|_{\mathfrak{M}}$, the above inequality still holds when $\mathcal{C}(\{|\varphi|>t\})$ is nonzero. Then we claim that the canonical linear functional $\mathcal{L}_{\mu}$ induced by $\mu$ is continuous. If $\varphi\in C_{0}$, then
\begin{align*}
|\mathcal{L}_{\mu}(\varphi)|\leq\int_{0}^{\infty}\mu^{\ast}(\{|\varphi|\geq t\})dt\leq\|\mu\|_{\mathfrak{M}}\int_{0}^{\infty}\mathcal{C}(\{|\varphi|\geq t\})dt=\|\mu\|_{\mathfrak{M}}\|\varphi\|_{L^{1}(\mathcal{C})},
\end{align*}
For $\varphi\in\mathcal{L}^{1,q}(\mathcal{C})$, we repeat the argument right before (\ref{last}) with $\|\mu\|_{\mathfrak{M}}$ in place of $M$, then $\|\mathcal{L}_{\mu}\|\leq \kappa_{0}q^{\frac{1-q}{q}}\|\mu\|_{\mathfrak{M}}<\infty$, which shows the claim. Finally, using (\ref{second to last}), we obtain $M\leq\|\mathcal{L}_{\mu}\|<\infty$. Since $0<M<\infty$, given sufficiently small $\varepsilon>0$, the term $M-\varepsilon$ does not belong to the set $\{a>0: |\mu|^{\ast}(E)\leq a\cdot\mathcal{C}(E)\}$. Then there is an $E\subseteq\mathcal{R}$ with
\begin{align}\label{contra}
|\mu|^{\ast}(E)>(M-\varepsilon)\cdot\mathcal{C}(E). 
\end{align}
The set satisfies $\mathcal{C}(E)<\infty$ for otherwise it will contradict (\ref{contra}). Also note that (\ref{3.8}) gives $\mathcal{C}(E)>0$. Then the argument given in the proof of Lemma \ref{norm switching} gives 
\begin{align*}
M-\varepsilon\leq\frac{|\mu|^{\ast}(E)}{\mathcal{C}(E)}\leq\sup_{K}\frac{|\mu|(K)}{\mathcal{C}(K)}=\|\mu\|_{\mathfrak{M}}.
\end{align*}
The arbitrariness of $\varepsilon$ yields $M\leq\|\mu\|_{\mathfrak{M}}$. We conclude that (\ref{1.2}) holds and the proof is now complete.
\end{proof}

%\bigskip
%\noindent{\bf Data Availability} Not applicable.

%\bigskip
%\noindent{\bf Declarations}

%\bigskip
%\noindent{\bf Conflicts of interest} The corresponding author states that there is no conflict of interest.

%%%%%%%%%%%%%%%%%%%%%%%%%%%%%%%%%%
%%%%%%%%%%%%%%%%%%%%%%%%%%%%%%%%%%
%%%%%%%%%%%%%%%%
%              END
%%%%%%%%%%%%%%%%%%%%%%%%%%%%%%%%%%
%%%%%%%%%%%%%%%%%%%%%%%%%%%%%%%%%%
%%%%%%%%%%%%%%%%
\end{document}